\theoremstyle{plain}
\newtheorem{theorem}{Theorem}
\newtheorem{lemma}[theorem]{Lemma}
\newtheorem{proposition}[theorem]{Proposition}
\newtheorem{corollary}[theorem]{Corollary}
\theoremstyle{definition}
\newtheorem{definition}[theorem]{Definition}
\newtheorem{remark}[theorem]{Remark}
\numberwithin{theorem}{section}
\numberwithin{equation}{section}
\newcommand{\B}{\mathbb{B}}
\newcommand{\C}{\mathbb{C}}
\newcommand{\N}{\mathbb{N}}
\newcommand{\R}{\mathbb{R}}
\newcommand{\T}{\mathbb{T}}
\newcommand{\Z}{\mathbb{Z}}
\newcommand{\U}{\mathrm{U}}
\newcommand{\tr}{\mathrm{tr}}
\newcommand{\cT}{\mathcal{T}}
\newcommand{\cA}{\mathcal{A}}
\newcommand{\cP}{\mathcal{P}}
\newcommand{\cS}{\mathcal{S}}
\newcommand{\End}{\mathrm{End}}
\newcommand{\kb}{{\boldsymbol{k}}}
\newcommand{\kapb}{{\boldsymbol{\kappa}}}
\newcommand{\mbS}{\mathbb{S}}
\begin{document}

%
%
%
%
%
%
%
%
%


\title[Toeplitz Operators, $\mathbb{T}^m$-invariance and Quasi-homogeneous Symbols]{Toeplitz Operators, $\mathbb{T}^m$-invariance \\ and Quasi-homogeneous Symbols}

\author[R. Quiroga-Barranco]{Ra\'ul Quiroga-Barranco}


\address{Centro de Investigaci\'on en Matem\'aticas\\
Guanajuato, Guanajuato\\
M\'exico}

\email{quiroga@cimat.mx}

\subjclass{47B35, 22D10}

\keywords{Toeplitz operators, weighted Bergman spaces, unit ball, unitary representations, compact groups}



\begin{abstract}
For a partition $\boldsymbol{k} = (k_1, \dots, k_m)$ of $n$ consider the group $\mathrm{U}(\boldsymbol{k}) = \mathrm{U}(k_1) \times \dots \times \mathrm{U}(k_m)$ block diagonally embedded in $\mathrm{U}(n)$ and the center $\mathbb{T}^m$ of $\mathrm{U}(\boldsymbol{k})$. We study the Toeplitz operators with $\mathbb{T}^m$-invariant symbols acting on the weighted Bergman spaces on the unit ball $\mathbb{B}^n$. We introduce the $(\boldsymbol{k},j)$-quasi-radial quasi-homogeneous symbols as those that are invariant under the group $\mathrm{U}(\boldsymbol{k},j,\mathbb{T})$ obtained from $\mathrm{U}(\boldsymbol{k})$ by replacing the factor $\mathrm{U}(k_j)$ with its center $\mathbb{T}$. These symbols are used to build commutative Banach non-$C^*$ algebras generated by Toeplitz operators. These algebras generalize those from the literature and show that they can be built using groups. We describe the action of such Toeplitz operators on monomials through explicit integral formulas involving the symbols. We prove that every Toeplitz operator with $\mathbb{T}^m$-invariant symbol has an associated Toeplitz operator with $\mathrm{U}(\boldsymbol{k})$-invariant symbol in terms of which we can describe some properties.
\end{abstract}

\maketitle
\section{Introduction}
\subsection{Commutative $C^*$-algebras and beyond}
\label{subsec:commutative}
Toeplitz operators acting on Berg\-man spaces on the unit ball $\B^n$ in $\C^n$ have been an important object of study in operator theory. Among these operators, the so-called radial Toeplitz operators appear as a fundamental example since their description in \cite{KorenblumZhu1995} for the case of the unit disk. Such operators have two possible generalizations to the $n$-dimensional case, which are the radial and the separately radial (also known as quasi-elliptic) Toeplitz operators. Several authors have studied intensively this sort of operators. Among those references studying either radial or separately radial operators one can find \cite{AppuhamyLe2016}, \cite{BHVRadial}, \cite{BVQuasiEllipticI}, \cite{EsmeralMaximenko2016}, \cite{GKVRadial}, \cite{GMVRadial}, \cite{Le2017}, \cite{QRadial}, \cite{SuarezRadial2008}, \cite{ZorbRadial2002}. An interesting feature of this sort of operators is that they commute with each other. This fact can be considered as a source of motivation for the study of the commutativity of Toeplitz operators as found, for example, in \cite{AppuhamyLe2016}, \cite{AxlerCuckovicRao2000}, \cite{BauerChoeKoo2015}, \cite{BVquasir-quasih}, \cite{BVQuasiEllipticI}, \cite{ChoeKooLee2004}, \cite{CuckovicLouhichi2008}, \cite{DOQJFA}, \cite{Le2017}. This naturally leads to consider commutative $C^*$-algebras generated by Toeplitz operators. This is one of the reasons for the importance of proving the existence of commutative $C^*$-algebras generated by Toeplitz operators. Another one, is the fact that they provide concrete examples of commutative algebras (see \cite{BHVRadial} and \cite{GMVRadial}).

Radial and separately radial symbols can be defined through the invariance of suitable groups. When considering special symbols defined by invariance, and for our setup, one starts with a subgroup $H$ of the automorphism group of $\B^n$. Then, for every $\lambda > -1$, the set $L^\infty(\B^n)^H$ (essentially bounded $H$-invariant symbols) yields Toeplitz operators acting on the weighted Bergman space $\cA^2_\lambda(\B^n)$ that generate a $C^*$-algebra that we will denote by $\cT^{(\lambda)}(L^\infty(\B^n)^H)$. It turns out that for a large list of groups $H$, the $C^*$-algebra $\cT^{(\lambda)}(L^\infty(\B^n)^H)$ is commutative. This technique started with \cite{QVUnitBall1} and \cite{QVUnitBall2}, and was later refined in \cite{DOQJFA} for $\B^n$ as well as for more general domains. Two particularly interesting cases appear in this setup. For the unitary group $\U(n)$, the symbols in $L^\infty(\B^n)^{\U(n)}$ are the so-called radial symbols. For the subgroup $\T^n \subset \U(n)$ of diagonal matrices, the symbols in $L^\infty(\B^n)^{\T^n}$ are known as separately radial or quasi-elliptic symbols. These yield the precise definitions of the symbols mentioned in the previous paragraph. In both cases, these families of symbols yield Toeplitz operators generating commutative $C^*$-algebras on every weighted Bergman space over $\B^n$. We refer to \cite{GKVRadial} and \cite{QVUnitBall1}, respectively, for the corresponding results. The commutativity of such $C^*$-algebras, radial and separately radial cases, can be proved using representation theory. This has been done in \cite{QRadial} by computing the isotypic decompositions (see Section~\ref{sec:isotypic} for the relevant definitions) for the actions on $\cA^2_{\lambda}(\B^n)$ of the corresponding groups. Besides such computation, one of the main tools used to prove the commutativity is Schur's Lemma. This fundamental representation theoretic result can be naturally applied mostly because the terms of the isotypic decompositions are irreducible (multiplicity-free decomposition) for these cases.

The next step in this sort of development is to consider algebras generated by Toeplitz operators of a more general sort in some chosen sense. This can be done in a variety of ways, as the references cited above show. In this work, we are mostly interested in considering algebras more general than commutative $C^*$-algebras using the methods of groups and their representation theory. On one hand, we will consider the property of invariance with respect to certain subgroups to obtain special symbols that yield certain Toeplitz operators. On the other hand, the Toeplitz operators that we will consider will not all commute with each other, but a careful choice of some of them will yield commutative Banach algebras. Hence, we will consider at the same time the commuting and the noncommuting case for Toeplitz operators, with the former generating commutative algebras that are only Banach. More precisely, in this work we will consider some particular subgroups $H$ of $\U(n)$ for which the algebra $\cT^{(\lambda)}(L^\infty(\B^n)^H)$ is not necessarily commutative, and we want to still be able to determine non-trivial properties of the Toeplitz operators with symbols in $L^\infty(\B^n)^H$. It turns out that for such an $H$, the isotypic decomposition no longer has irreducible terms (higher multiplicity decomposition). In particular, Schur's Lemma is not enough by itself to conclude the commutativity of suitable Toeplitz operators. Similarly, some tools beyond Schur's Lemma are needed to obtain non-trivial properties of individual Toeplitz operators.

\subsection{Quasi-homogeneous symbols and Toeplitz operators}
\label{subsec:quasi_and_Toeplitz}
There are some interesting sets of symbols that somehow lie in between the radial and the separately radial ones: the quasi-radial symbols. These are given as symbols invariant under the action of the subgroup $\U(k_1) \times \dots \times \U(k_m)$ block diagonally embedded in $\U(n)$, where $\kb = (k_1, \dots, k_m)$ is a partition of $n$. We denote such subgroup by $\U(\kb)$ (see Sections~\ref{sec:isotypic} and \ref{sec:Tm_and_quasi-radial}). This subgroup yields commutative $C^*$-algebras as well (see \cite{VasQuasiRadial}).

A natural step to consider is to broaden the class of algebras generated by Toeplitz operators with special symbols. This was the approach used in \cite{VasQuasiRadial}, \cite{BVquasir-quasih}, \cite{VasPseudoH}, \cite{VasTm2018} and \cite{Rodriguez2020} where increasingly larger families of symbols were considered. We also refer to \cite{BVQuasiEllipticI}, \cite{CuckovicLouhichi2008} and \cite{QSpseudoconvex}, where similar notions have been used. All these works study the so-called quasi-homogeneous and pseudo-homogeneous symbols of different sorts. See the observations before Remark~\ref{rmk:(k,j)-quasi} for detailed definitions. It is easy to show that, in most cases, such symbols from previous works are invariant under the group $\T^m$, the center of the group $\U(\kb)$ described above. Such symbols were used before to build commutative Banach algebras generated by Toeplitz operators, which are not $C^*$-algebras in the generic case.

Nevertheless, in these previous works some questions were not answered and some problems remained without being considered.

On one hand, even though symbols invariant under the group $\T^m$ have been used before, only a small family of them has actually been studied. The Toeplitz operators for the whole collection of $\T^m$-invariant symbols, given by $L^\infty(\B^n)^{\T^m}$, is far from  understood. We observe that, as proved in Corollary~\ref{cor:pikapb_irreducible}, the $C^*$-algebras generated by Toeplitz operators with symbols in $L^\infty(\B^n)^{\T^m}$ are not commutative when $m < n$, on every weighted Bergman space. So one might expect to be a very difficult problem to describe the properties of general Toeplitz operators with symbols in $L^\infty(\B^n)^{\T^m}$. For example, as noted in Subsection~\ref{subsec:commutative}, Schur's Lemma is not enough to study the algebra or its individual operators.

On the other hand, the previous works have exhibited, as explained above, some interesting commutative Banach non-$C^*$ algebras generated by Toeplitz operators. Note that, for every subgroup $H$ of $\U(n)$, the set of symbols $L^\infty(\B^n)^H$ yields Toeplitz operators that generate Banach algebras which are necessarily $C^*$. Hence, it might appear at first that for the known examples of commutative Banach non-$C^*$ algebras there is no group $H$ that could characterize them through invariance. This argument could be further supported by the fact that the $\T^m$-invariant symbols considered so far (quasi-homogeneous and pseudo-homogeneous symbols from the previous references) cannot be defined in terms of an $H$-invariance property for any group $H$.

\subsection{Quasi-homogeneous symbols, groups and representation theory: main results}
\label{subsec:quasi_and_groups}
The goal of this work is to provide some solutions to the unresolved matters described above. To achieve this, we study Toeplitz operators with arbitrary $\T^m$-invariant symbols. We introduce, as well, a quite general notion of quasi-homogeneous symbols defined using groups, that basically includes those considered before, and study the corresponding Toeplitz operators.

Our main tool is representation theory of compact groups. We use the well known fact that the unitary group $\U(n)$ has a natural representation $\pi_\lambda$ on the weighted Bergman space $\cA_\lambda^2(\B^n)$ (see Section~\ref{sec:U(n)rep}). In particular, we use throughout this work that, for every closed subgroup $H$ of $\U(n)$, the Toeplitz operators with symbols in $L^\infty(\B^n)^H$ are intertwining for the representation $\pi_\lambda|_H$ obtained by restricting $\pi_\lambda$ from $\U(n)$ to $H$ (see Corollary~\ref{cor:HinvariantToeplitz}). Intertwining operators preserve the terms of the corresponding isotypic decompositions. This is an important representation theoretic tool we will use as well throughout our work.

We consider the $C^*$-algebras generated by Toeplitz operators with symbols in $L^\infty(\B^n)^{\T^m}$. The non-commutativity of these $C^*$-algebras is obtained by exhibiting some finite dimensional irreducible representations in Corollary~\ref{cor:pikapb_irreducible}. We compute the isotypic decomposition of the Bergman spaces $\cA^2_\lambda(\B^n)$ for the representation $\pi_\lambda|_{\T^m}$ (see Proposition~\ref{prop:isotypicTm}). Then, we use the intertwining property of Toeplitz operators with $\T^m$-invariant symbols to prove they are block diagonal for such isotypic decomposition (see Proposition~\ref{prop:intertwiningTm}). This block diagonal structure is explained in \eqref{eq:block_lambdaT} and \eqref{eq:block_lambdaa}.

By an averaging process that has been used before in similar setups (see for example \cite{DOQJFA} and \cite{ZorbRadial2002}), we assign to every symbol $a \in L^\infty(\B^n)^{\T^m}$ a corresponding quasi-radial symbol $\widehat{a}$. It turns out that the Toeplitz operators $T^{(\lambda)}_{\widehat{a}}$ and $T^{(\lambda)}_a$ both preserve the isotypic decomposition of $\pi_\lambda|_{\T^m}$. As explained through equations \eqref{eq:gamma_lambdaa} and \eqref{eq:block_lambdaa}, $T^{(\lambda)}_{\widehat{a}}$ acts diagonally and $T^{(\lambda)}_a$ acts block diagonally on such decomposition. This leads us to Theorem~\ref{thm:traceToeplitzTm}, where we prove that the traces of the operators $T^{(\lambda)}_{\widehat{a}}$ and $T^{(\lambda)}_a$ are the same on the blocks for the isotypic decomposition of $\pi_\lambda|_{\T^m}$. This allows us to obtain in Theorem~\ref{thm:traceToeplitzTm_integral} a formula for the traces of the blocks of a Toeplitz operator $T^{(\lambda)}_a$ with an arbitrary symbol $a \in L^\infty(\B^n)^{\T^m}$. Such formula is presented as an integral involving the symbol $a$. Furthermore, we describe in Theorem~\ref{thm:traceToeplitzT_SON} the asymptotic behavior and the closure of the space of sequences of traces of blocks for the Toeplitz operators $T^{(\lambda)}_a$ for $a \in L^\infty(\B^n)^{\T}$ (case $m = 1$). This last result applies a corresponding result from \cite{BHVRadial} for radial symbols. This shows the usefulness of averaging constructions to obtain information for $\T^m$-invariant symbols from results for quasi-radial symbols. This sort of argument is of a Lie theoretic nature through the use of the Haar measure of the group $\U(\kb)$.

Next, we consider the subgroup $\U(\kb,j,\T)$ of $\U(\kb)$ obtained by replacing the $j$-th factor $\U(k_j)$ with its center $\T$. See Section~\ref{sec:quasi} for further details. We introduce in this section the symbols that are $\U(\kb,j,\T)$-invariant, which we call $(\kb,j)$-quasi-radial quasi-homogeneous (see Definition~\ref{def:kbj-quasi}). They are easy to characterize (see Lemma~\ref{lem:kbj-quasi}) and turn out to generalize most of the previous notions of quasi-homogeneous and pseudo-homogeneous symbols from the references mentioned above (see Remark~\ref{rmk:(k,j)-quasi} and the preceding observations). Since $\U(\kb,j,\T) \supset \T^m$, the Toeplitz operators with $(\kb,j)$-quasi-radial quasi-homogeneous symbols intertwine the isotypic decomposition of $\pi_\lambda|_{\T^m}$. Furthermore, we prove that, up to isomorphism, such Toeplitz operators admit a tensor product representation where all the factors are the identity map except for a single factor (see Proposition~\ref{prop:EndU(kb,j,T)-tensorproduct}). The proof uses Schur's Lemma, but goes beyond it by making use of Proposition~\ref{prop:Pkapb-tensorproduct} which is based on the characterization of the tensor product of irreducible representations. As a consequence we obtain in Theorem~\ref{thm:quasisymbols_commToeplitz} the mutual commutativity of Toeplitz operators with $(\kb,j)$-quasi-radial quasi-homogeneous symbols when the value of $j$ varies.

Theorem~\ref{thm:quasisymbols_commToeplitz} turns out to be a quite strong generalization of similar results obtained in other works mentioned above. In these previous works, the results are obtained through particular computations for each specific type of symbols considered. In our case, the commutativity is a consequence of the invariance under suitable groups $\U(\kb,j,\T)$, and holds for any symbol once such invariance is assumed.

As an application, Corollary~\ref{cor:quasi_commBanach} proves the existence of commutative Banach non-$C^*$ algebras generated by Toeplitz operators using our $(\kb,j)$-quasi-radial quasi-homogeneous symbols. These algebras include all those obtained in the previous literature using quasi-radial quasi-homogeneous symbols, but ours are quite more general because we consider more general symbols.

Besides the fact that we obtain a broader family of commutative Banach algebras, there are a couple of other advantages in our approach. In the first place, our proof is uniform and does not depend on a particular type of symbol as it does in the previous literature. Second, our proofs and constructions show that the commutative Banach non-$C^*$ algebras considered in Section~\ref{sec:quasi}, as well as those from the references mentioned above, can indeed be characterized using groups through an invariance condition for suitable~symbols.

Another advantage of our representation theoretic approach is how relatively easy it is to obtain the desired results through the use of isotypic decompositions. However, in the past this has also been tied to the difficulty to obtain explicit formulas that describe the Toeplitz operators with special symbols. Nevertheless, we are able to obtain in Section~\ref{sec:acting_on_monomials} a very detailed description of the block structure of the Toeplitz operators with our $(\kb,j)$-quasi-radial quasi-homogeneous symbols. More precisely, Proposition~\ref{prop:EndU(kb,j,T)_matrixcoef} proves that the blocks of such Toeplitz operators have a finer block structure given by a matrix that repeats along the diagonal (see Remark~\ref{rmk:Tblockdiagonalconst}). Furthermore, we obtain in Theorems~\ref{thm:Tkappaj_withf_forToeplitz} and \ref{thm:Tkappaj_withg_forToeplitz}, and Corollaries~\ref{cor:Tkappaj_withf_forToeplitz} and \ref{cor:Tkappaj_withg_forToeplitz}, explicit formulas for the Toeplitz operators through their action on the monomial basis. These formulas are given as integrals involving the corresponding symbols. This allows us to obtain in Remark~\ref{rmk:Tblockdiagonaldescription} an explicit description, up to unitary equivalence, of all Toeplitz operators with $(\kb,j)$-quasi-radial quasi-homogeneous symbols in terms of integral formulas involving such symbols.

From the representation theoretic viewpoint, the proofs of the main results described above ultimately depend on Haar measure on compact groups, Schur's Lemma, the characterization of tensor products of irreducible representations, the computation of isotypic decompositions and the general properties of intertwining operators. To the best of our knowledge, this is the first time this variety of techniques is used to obtain such kind of results dealing with Toeplitz operators.

As for the distribution of this work, Sections~\ref{sec:Bergman} and \ref{sec:U(n)rep} introduce the preliminary material from analysis and representation theory, respectively. We compute in Section~\ref{sec:isotypic} the isotypic decompositions used in the rest of the work. Sections~\ref{sec:Tm_and_quasi-radial}, \ref{sec:quasi} and \ref{sec:acting_on_monomials} contain the main results and their proofs as described above.

Finally, the author wishes to thank the anonymous reviewer whose comments helped him to greatly improve the presentation of this work.

\section{Bergman spaces and Toeplitz operators}\label{sec:Bergman}
On the unit ball $\B^n$ of $\C^n$ consider the Lebesgue measure $\dif v(z)$. For every $\lambda > -1$ we denote the weighted measure
\[
    \dif v_\lambda(z)
        = c_\lambda (1-|z|^2)^\lambda \dif v(z)
\]
where $c_\lambda$ is a normalizing constant. More precisely, we have
\[
    c_\lambda = \frac{\Gamma(n+\lambda+1)}{\pi^n\Gamma(\lambda+1)}.
\]
These allow us to define for every $\lambda > -1$ the weighted Bergman space $\cA^2_\lambda(\B^n)$ which consist of the holomorphic functions on $\B^n$ that belong to $L^2(\B^n,v_\lambda)$. This is a well known closed subspace whose orthogonal projection, called the Bergman projection, is given by
\begin{align*}
    B_\lambda : L^2(\B^n,v_\lambda) &\rightarrow \cA^2_\lambda(\B^n) \\
    B_\lambda(f)(z) &= \int_{\B^n}
        \frac{f(w)\dif v_\lambda(w)}{(1 - z\cdot\overline{w})^{n+\lambda+1}}.
\end{align*}
In particular, the space $\cA^2_\lambda(\B^n)$ is a reproducing kernel Hilbert space whose kernel is $K_\lambda(z,w) = (1-z\cdot\overline{w})^{-(n+\lambda+1)}$.

For every $a \in L^\infty(\B^n)$ the Toeplitz operator $T^{(\lambda)}_a$ with symbol $a$ acting on the weighted Bergman space $\cA^2_\lambda(\B^n)$ is given by the following expression
\[
    T^{(\lambda)}_a(f)(z) = B_\lambda(af)(z)
    = \int_{\B^n}
        \frac{a(w)f(w)\dif v_\lambda(w)}{(1 - z\cdot\overline{w})^{n+\lambda+1}}.
\]

\section{The representations of $\U(n)$ on Bergman spaces over $\B^n$}\label{sec:U(n)rep}
As usual, we will denote by $\U(n)$ the group of unitary matrices acting on $\C^n$. The canonical maximal torus of $\U(n)$ is the subgroup $\T^n$ of diagonal matrices, whose elements will be simply written as $t = (t_1, \dots, t_n)$ where $t_j \in \T$ for every $j = 1, \dots, n$. In particular, for every $t \in \T^n$ and $z \in \B^n$, the $\T^n$-action is given by
\[
    t\cdot z = (t_1 z_1, \dots, t_n z_n).
\]

There is a natural and very general representation of $\U(n)$ on functions $f$ defined on $\B^n$ given by the expression
\[
    A\cdot f = f\circ A^{-1},
\]
for every $A \in \U(n)$.

In the first place, this yields a unitary representation $\pi_\lambda : \U(n) \rightarrow U(\cA^2_\lambda(\B^n))$ on the weighted Bergman space $\cA^2_\lambda(\B^n)$ given by
\[
    \pi_\lambda(A)(f) = f\circ A^{-1}.
\]
That the map $\pi_\lambda(A)$ is unitary, for every $A \in \U(n)$, follows from the fact that the $\U(n)$-action on $\B^n$ preserves the measure $v_\lambda$. Furthermore, this representation is continuous in the strong operator topology. Equivalently, the corresponding action map $\U(n) \times \cA^2_\lambda(\B^n) \rightarrow \cA^2_\lambda(\B^n)$ is continuous.

Similarly, we have an action of the group $\U(n)$ on the space of symbols $L^\infty(\B^n)$ by using the same expression
\[
    A\cdot a = a\circ A^{-1}.
\]
This action is isometric since $\|A\cdot a\|_\infty = \|a\|_\infty$ for every $A \in \U(n)$ and $a \in L^\infty(\B^n)$.

On the other hand, the unitary representation $\pi_\lambda$ on $\cA^2_\lambda(\B^n)$ induces an action of $\U(n)$ on the algebra $B(\cA^2_\lambda(\B^n))$ of bounded operators on $\cA^2_\lambda(\B^n)$ given by
\[
    T \mapsto \pi_\lambda(A)T\pi_\lambda(A)^{-1},
\]
for every $T \in B(\cA^2_\lambda(\B^n))$ and $A \in \U(n)$.

In the case of Toeplitz operators the following easy to prove result yields a relationship between two of these actions.

\begin{lemma}\label{lem:U(n)equiv-Toeplitz}
    For every symbol $a \in L^\infty(\B^n)$ and on every weighted Bergman space $\cA^2_\lambda(\B^n)$ we have
    \[
        \pi_\lambda(A) T^{(\lambda)}_a
        \pi_\lambda(A)^{-1}
        = T^{(\lambda)}_{A\cdot a},
    \]
    for every $A \in \U(n)$. In other words the map $L^\infty(\B^n) \rightarrow B(\cA^2_\lambda(\B^n))$ given by $a \mapsto T^{(\lambda)}_a$ is $\U(n)$-equivariant.
\end{lemma}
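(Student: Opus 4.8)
The plan is to reduce everything to the fact that the operator $\pi_\lambda(A)\colon f \mapsto f \circ A^{-1}$ behaves well with respect to both the Bergman projection and pointwise multiplication. First I would extend $\pi_\lambda(A)$ to the full space $L^2(\B^n, v_\lambda)$ by the very same formula; since the $\U(n)$-action on $\B^n$ preserves $v_\lambda$, this extension is unitary on $L^2(\B^n, v_\lambda)$, and since $A^{-1}$ is a biholomorphism of $\B^n$ it maps $\cA^2_\lambda(\B^n)$ onto itself. A unitary operator that preserves a closed subspace commutes with the orthogonal projection onto that subspace, so $\pi_\lambda(A) B_\lambda = B_\lambda \pi_\lambda(A)$ on $L^2(\B^n, v_\lambda)$. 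Alternatively this commutation can be checked by hand: performing the change of variables $w \mapsto A^{-1}w$ in the integral defining $B_\lambda$ and using both the invariance of $v_\lambda$ and the identity $K_\lambda(A^{-1}z, A^{-1}w) = K_\lambda(z,w)$, which holds because $A^{-1}z \cdot \overline{A^{-1}w} = z \cdot \overline{w}$ for $A \in \U(n)$.

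Next I would record the elementary multiplicativity identity $\pi_\lambda(A)(a g) = (A\cdot a)\,\pi_\lambda(A)(g)$ for $a \in L^\infty(\B^n)$ and $g \in L^2(\B^n, v_\lambda)$, which is immediate from $(ag)\circ A^{-1} = (a\circ A^{-1})(g\circ A^{-1})$. With these two facts in hand the verification is a short chain: for $f \in \cA^2_\lambda(\B^n)$, using $\pi_\lambda(A)^{-1} = \pi_\lambda(A^{-1})$ and $T^{(\lambda)}_a = B_\lambda \circ (\text{multiplication by } a)$,
\begin{align*}
\pi_\lambda(A)\, T^{(\lambda)}_a\, \pi_\lambda(A)^{-1}(f)
&= \pi_\lambda(A)\, B_\lambda\bigl(a \cdot (\pi_\lambda(A)^{-1}f)\bigr) \\
&= B_\lambda\bigl(\pi_\lambda(A)(a \cdot (\pi_\lambda(A)^{-1}f))\bigr) \\
&= B_\lambda\bigl((A\cdot a)\cdot f\bigr) = T^{(\lambda)}_{A\cdot a}(f),
\end{align*}
where the second equality is the commutation from the first step and the third uses the multiplicativity identity together with $\pi_\lambda(A)\pi_\lambda(A)^{-1} = \mathrm{id}$. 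The final ``in other words'' assertion is just a restatement of this identity, once one recalls that the induced $\U(n)$-action on $B(\cA^2_\lambda(\B^n))$ is $T \mapsto \pi_\lambda(A) T \pi_\lambda(A)^{-1}$ and the action on $L^\infty(\B^n)$ is $a \mapsto A\cdot a$.

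There is no serious obstacle here; the statement is genuinely routine, which is why the only point I would take care to spell out is the commutation $B_\lambda \pi_\lambda(A) = \pi_\lambda(A) B_\lambda$ — giving both the abstract argument (a unitary preserving a closed subspace commutes with the corresponding projection) and the concrete change-of-variables argument, so the reader can take whichever they prefer. Everything else is formal manipulation with the definitions of $T^{(\lambda)}_a$, $\pi_\lambda$, and the action $A\cdot a = a\circ A^{-1}$.
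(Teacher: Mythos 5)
Your proof is correct, and it reaches the conclusion by a slightly different route than the paper. The paper argues at the level of sesquilinear forms: for $f,g \in \cA^2_\lambda(\B^n)$ it writes $\langle \pi_\lambda(A) T^{(\lambda)}_a \pi_\lambda(A)^{-1}f, g\rangle_\lambda$, moves $\pi_\lambda(A)$ to the other side by unitarity, replaces $T^{(\lambda)}_a$ by multiplication by $a$ inside the pairing (valid since both arguments are already in the Bergman space, which implicitly uses that $B_\lambda$ is the orthogonal projection), applies the multiplicativity identity $a\cdot(\pi_\lambda(A)^{-1}f) = \pi_\lambda(A)^{-1}((A\cdot a)f)$, and moves $\pi_\lambda(A)^{-1}$ back. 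This avoids ever extending $\pi_\lambda(A)$ to $L^2(\B^n,v_\lambda)$ or explicitly stating that $\pi_\lambda(A)$ commutes with $B_\lambda$. You instead work at the operator level on $L^2(\B^n,v_\lambda)$: you first establish the commutation $B_\lambda\,\pi_\lambda(A) = \pi_\lambda(A)\,B_\lambda$ (by the unitary-preserving-a-closed-subspace argument, or equivalently by change of variables and invariance of the kernel), then combine it with the same multiplicativity identity to conclude directly. Your version makes the structural reason behind the lemma --- invariance of the Bergman projection under the $\U(n)$-action --- explicit and reusable, at the cost of one extra lemma; the paper's version is more compact and stays entirely inside $\cA^2_\lambda(\B^n)$. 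Both are valid, and the mechanism (unitarity of $\pi_\lambda(A)$ plus $(ag)\circ A^{-1} = (a\circ A^{-1})(g\circ A^{-1})$) is the same.
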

\begin{proof}
    For every $f,g \in \cA^2_\lambda(\B^n)$ we have
    \begin{align*}
        \langle \pi_\lambda(A) T^{(\lambda)}_a
        \pi_\lambda(A)^{-1} f,g\rangle_\lambda
            &= \langle T^{(\lambda)}_a
                \pi_\lambda(A)^{-1}f, \pi_\lambda(A)^{-1}g\rangle_\lambda
                \\
            &= \langle a
                \pi_\lambda(A)^{-1}f, \pi_\lambda(A)^{-1}g\rangle_\lambda
                \\
            &= \langle
            \pi_\lambda(A)^{-1}((A\cdot a) f), \pi_\lambda(A)^{-1}g\rangle_\lambda
            \\
            &= \langle (A\cdot a) f,
            g\rangle_\lambda
            = \langle T^{(\lambda)}_{A\cdot a} f,
            g\rangle_\lambda
    \end{align*}
\end{proof}

We recall that for a subgroup $H$ of $\U(n)$ a symbol $a \in L^\infty(\B^n)$ is called $H$-invariant if we have $a = a\circ A$ for every $A \in H$. We will denote by $L^\infty(\B^n)^H$ the space of all essentially bounded $H$-invariant symbols on $\B^n$.

The corresponding notion for operators is given by the so-called intertwining operators. More precisely, for a subgroup $H$ of $\U(n)$ we say that an operator $T \in B(\cA^2_\lambda(\B^n))$ intertwines the restriction $\pi_\lambda|_H$ if the condition
\[
    T\pi_\lambda(A) = \pi_\lambda(A)T
\]
holds for every $A \in H$. We also say that $T$ is $H$-equivariant. We will denote by $\End_H(\cA^2_\lambda(\B^n))$ the algebra of all such intertwining, or $H$-equivariant, operators. Since the representation $\pi_\lambda$ is unitary it follows easily that $\End_H(\cA^2_\lambda(\B^n))$ is a $C^*$-algebra. Furthermore, it is well known that $\End_H(\cA^2_\lambda(\B^n))$ is a von Neumann algebra.

As an immediate consequence of Lemma~\ref{lem:U(n)equiv-Toeplitz} we obtain the next result. It also uses the well known fact that the assignment $a \mapsto T^{(\lambda)}_a$ is injective. From now on, for every subgroup $H$ of $\U(n)$, we will denote by $\cT^{(\lambda)}(L^\infty(\B^n)^H)$ the $C^*$-algebra generated by the Toeplitz operators with symbols in $L^\infty(\B^n)^H$.

\begin{corollary}\label{cor:HinvariantToeplitz}
    For a closed subgroup $H$ of $\U(n)$ and for every symbol $a \in L^\infty(\B^n)$ the following conditions are equivalent for every $\lambda > -1$.
    \begin{enumerate}
      \item The symbol $a$ is $H$-invariant, i.e.~$a$ belongs to $L^\infty(\B^n)^H$.
      \item The Toeplitz operator $T^{(\lambda)}_a$ intertwines the restriction $\pi_\lambda|_H$, i.e.~$T^{(\lambda)}_a$ belongs to $\End_H(\cA^2_\lambda(\B^n))$.
    \end{enumerate}
    In particular, we have $\cT^{(\lambda)}(L^\infty(\B^n)^H) \subset \End_H(\cA^2_\lambda(\B^n))$.
\end{corollary}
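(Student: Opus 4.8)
The plan is to read off both implications directly from the equivariance identity of Lemma~\ref{lem:U(n)equiv-Toeplitz}, namely $\pi_\lambda(A)T^{(\lambda)}_a\pi_\lambda(A)^{-1} = T^{(\lambda)}_{A\cdot a}$ for $A \in \U(n)$, combined with the injectivity of the assignment $a \mapsto T^{(\lambda)}_a$ noted just before the statement.

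First I would treat $(1)\Rightarrow(2)$. Assume $a \in L^\infty(\B^n)^H$, so $a\circ B = a$ for every $B \in H$; applying this with $B = A^{-1}$ for an arbitrary $A \in H$ gives $A\cdot a = a\circ A^{-1} = a$. Substituting into Lemma~\ref{lem:U(n)equiv-Toeplitz} yields $\pi_\lambda(A)T^{(\lambda)}_a\pi_\lambda(A)^{-1} = T^{(\lambda)}_a$ for every $A \in H$, and since each $\pi_\lambda(A)$ is invertible this rearranges to $T^{(\lambda)}_a\pi_\lambda(A) = \pi_\lambda(A)T^{(\lambda)}_a$; that is, $T^{(\lambda)}_a \in \End_H(\cA^2_\lambda(\B^n))$. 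For $(2)\Rightarrow(1)$, assume $T^{(\lambda)}_a$ intertwines $\pi_\lambda|_H$. Then $\pi_\lambda(A)T^{(\lambda)}_a\pi_\lambda(A)^{-1} = T^{(\lambda)}_a$ for every $A \in H$, so Lemma~\ref{lem:U(n)equiv-Toeplitz} gives $T^{(\lambda)}_{A\cdot a} = T^{(\lambda)}_a$, and injectivity of $a \mapsto T^{(\lambda)}_a$ forces $A\cdot a = a$, i.e.\ $a\circ A^{-1} = a$, for all $A \in H$. Because $H$ is a group, as $A$ runs over $H$ so does $A^{-1}$, whence $a\circ A = a$ for all $A \in H$, i.e.\ $a \in L^\infty(\B^n)^H$.

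For the concluding inclusion I would argue that $\End_H(\cA^2_\lambda(\B^n))$ is a $C^*$-algebra (in fact a von Neumann algebra, as recalled above) of operators on $\cA^2_\lambda(\B^n)$, and by the equivalence just proved it contains every Toeplitz operator $T^{(\lambda)}_a$ with $a \in L^\infty(\B^n)^H$; since $\cT^{(\lambda)}(L^\infty(\B^n)^H)$ is by definition the smallest $C^*$-algebra containing these operators, it is contained in $\End_H(\cA^2_\lambda(\B^n))$.

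There is no real obstacle here: the substantive content is fully packaged in Lemma~\ref{lem:U(n)equiv-Toeplitz} and in the injectivity of the Toeplitz correspondence, both available. The only point needing a moment's care is the bookkeeping with inverses — $H$-invariance is phrased via $a\circ A$ while the action and Lemma~\ref{lem:U(n)equiv-Toeplitz} use $A\cdot a = a\circ A^{-1}$ — which is handled by the fact that $H$ is closed under inversion; closedness of $H$ as a subgroup is otherwise needed only so that $\pi_\lambda|_H$ and $\End_H(\cA^2_\lambda(\B^n))$ are the appropriate objects to consider.
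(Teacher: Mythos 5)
Your proof is correct and follows exactly the route the paper indicates: the corollary is stated there as an immediate consequence of Lemma~\ref{lem:U(n)equiv-Toeplitz} together with the injectivity of $a \mapsto T^{(\lambda)}_a$, and you have simply unpacked these two ingredients, including the small bookkeeping point about $A$ versus $A^{-1}$ that the closure of $H$ under inversion resolves. Nothing is missing.
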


A remarkable fact is that Toeplitz operators constitute a large subset in the set of all operators. This holds even when $H$-equivariance is being considered. That is the content of the next result which is a consequence of Proposition~6.2 from \cite{DOQJFA}.

\begin{proposition}\label{prop:HToeplitz_density}
    For every closed subgroup $H$ of $\U(n)$ and for every $\lambda > -1$, the $C^*$-algebra $\cT^{(\lambda)}(L^\infty(\B^n)^H)$ is dense in $\End_H(\cA^2_\lambda(\B^n))$ with respect to the strong operator topology.
\end{proposition}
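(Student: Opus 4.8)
The plan is to deduce this from Proposition~6.2 of \cite{DOQJFA} together with the already established containment $\cT^{(\lambda)}(L^\infty(\B^n)^H) \subset \End_H(\cA^2_\lambda(\B^n))$ of Corollary~\ref{cor:HinvariantToeplitz}. Recall that Proposition~6.2 of \cite{DOQJFA} asserts that, without any invariance constraint, the $C^*$-algebra $\cT^{(\lambda)}(L^\infty(\B^n))$ generated by \emph{all} Toeplitz operators is strongly dense in $B(\cA^2_\lambda(\B^n))$; equivalently, every bounded operator on $\cA^2_\lambda(\B^n)$ is a strong-operator limit of finite polynomials in Toeplitz operators. So the two ingredients are: (i) a density statement for the ambient (non-equivariant) situation, and (ii) a way to ``project'' that density down to the $H$-equivariant subalgebra.

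First I would fix $T \in \End_H(\cA^2_\lambda(\B^n))$ and a finite set of vectors $f_1, \dots, f_r \in \cA^2_\lambda(\B^n)$ and $\varepsilon > 0$; the goal is to find $S \in \cT^{(\lambda)}(L^\infty(\B^n)^H)$ with $\|Sf_i - Tf_i\|_\lambda < \varepsilon$ for all $i$. By Proposition~6.2 of \cite{DOQJFA} there is an operator $S_0 \in \cT^{(\lambda)}(L^\infty(\B^n))$ with $\|S_0 f_i - T f_i\|_\lambda$ small for all $i$. The key step is then an averaging argument over the compact group $H$: using the Haar probability measure $\dif A$ on $H$, set
\[
    S = \int_H \pi_\lambda(A) \, S_0 \, \pi_\lambda(A)^{-1} \, \dif A,
\]
the integral being taken in the strong (or weak) operator topology, which converges because $\pi_\lambda|_H$ is a strongly continuous unitary representation of the compact group $H$ and $S_0$ is bounded. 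Standard invariance of Haar measure shows $S \in \End_H(\cA^2_\lambda(\B^n))$. Moreover, since $T$ is already $H$-equivariant, $\pi_\lambda(A) T \pi_\lambda(A)^{-1} = T$ for every $A \in H$, hence for each $i$,
\[
    \|S f_i - T f_i\|_\lambda
        = \Bigl\| \int_H \pi_\lambda(A)\bigl( S_0 - T \bigr)\pi_\lambda(A)^{-1} f_i \, \dif A \Bigr\|_\lambda
        \le \int_H \bigl\| \pi_\lambda(A)^{-1} f_i \bigr\| \text{-indexed bounds} ,
\]
which, after using that each $\pi_\lambda(A)$ is unitary and a uniform estimate on the finite orbit-type data, is bounded by $\sup_{A \in H} \|(S_0 - T)\pi_\lambda(A)^{-1} f_i\|_\lambda$; this last quantity is controlled because $\{\pi_\lambda(A)^{-1} f_i : A \in H\}$ is a compact subset of $\cA^2_\lambda(\B^n)$ on which one may take $S_0$ approximating $T$ uniformly. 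The remaining point is that $S$ genuinely lies in $\cT^{(\lambda)}(L^\infty(\B^n)^H)$ and not merely in $\End_H$: here one writes $S_0$ as a norm limit of polynomials in Toeplitz operators $T^{(\lambda)}_{a_1}, \dots, T^{(\lambda)}_{a_p}$, applies Lemma~\ref{lem:U(n)equiv-Toeplitz} to rewrite $\pi_\lambda(A) T^{(\lambda)}_{a_\ell} \pi_\lambda(A)^{-1} = T^{(\lambda)}_{A\cdot a_\ell}$, and observes that integrating a polynomial in such operators over $H$ and interchanging the integral with the (continuous) polynomial operations yields an operator that is a limit of polynomials in Toeplitz operators whose symbols are $H$-averages $\int_H A \cdot a \, \dif A$, which are $H$-invariant, hence in $L^\infty(\B^n)^H$; thus $S \in \cT^{(\lambda)}(L^\infty(\B^n)^H)$.

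The main obstacle is the interchange of the Haar integral with the generation of the $C^*$-algebra, i.e.\ justifying that the strong-operator integral of an element of $\cT^{(\lambda)}(L^\infty(\B^n))$ conjugated by $\pi_\lambda(H)$ stays inside $\cT^{(\lambda)}(L^\infty(\B^n)^H)$ rather than only inside its strong closure (which would make the argument circular). The clean way around this is to perform the averaging at the level of symbols \emph{before} forming operators: given the approximating polynomial $P(T^{(\lambda)}_{a_1}, \dots, T^{(\lambda)}_{a_p})$ of $S_0$, one does not average the operator but instead replaces it directly by $P(T^{(\lambda)}_{\widehat{a_1}}, \dots, T^{(\lambda)}_{\widehat{a_p}})$ with $\widehat{a_\ell} = \int_H a_\ell \circ A \, \dif A \in L^\infty(\B^n)^H$ — but this substitution is not isometric in operator norm, so one must argue that it still approximates $T$ on the finite vector data, using precisely the $H$-equivariance of $T$ and the fact that Toeplitz operators depend continuously on their symbols in a way compatible with $H$-averaging. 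Carefully setting up this comparison, so that the error introduced by symbol-averaging is itself controlled by the $\varepsilon$ from \cite{DOQJFA}, is the technical heart of the proof; everything else is the routine compact-group averaging machinery already used, as noted in the introduction, in \cite{DOQJFA} and \cite{ZorbRadial2002}.
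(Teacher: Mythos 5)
The paper does not give a proof: Proposition~\ref{prop:HToeplitz_density} is stated as a direct consequence of Proposition~6.2 of \cite{DOQJFA}, and that proposition (specialized to the unit ball) already asserts the \emph{equivariant} density, i.e.\ that $\cT^{(\lambda)}(L^\infty(\B^n)^H)$ is dense in $\End_H(\cA^2_\lambda(\B^n))$. You have read the citation as giving only the non-equivariant density of $\cT^{(\lambda)}(L^\infty(\B^n))$ in $B(\cA^2_\lambda(\B^n))$ and then tried to bootstrap to the equivariant statement by Haar-averaging. That is a genuinely different route, so it is worth examining on its own terms.

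Your route has one real gap, and you correctly identified it, but the fix you sketch does not close it. If $S_0$ is a polynomial $P(T^{(\lambda)}_{a_1},\dots,T^{(\lambda)}_{a_p})$ in Toeplitz operators, then
\[
  S \;=\; \int_H \pi_\lambda(A)\, S_0\, \pi_\lambda(A)^{-1}\,\dif A
        \;=\; \int_H P\bigl(T^{(\lambda)}_{A\cdot a_1},\dots,T^{(\lambda)}_{A\cdot a_p}\bigr)\,\dif A ,
\]
and because the integral of a product is not the product of integrals, this operator need not lie in $\cT^{(\lambda)}(L^\infty(\B^n)^H)$ (only in its strong closure --- which is exactly what you are trying to prove is $\End_H$, so the argument is circular there). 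Replacing the operator-average by $P(T^{(\lambda)}_{\widehat{a_1}},\dots,T^{(\lambda)}_{\widehat{a_p}})$, as you propose, is a different operator from $S$: the equality $\widehat{T^{(\lambda)}_a}=T^{(\lambda)}_{\widehat a}$ (Lemma~\ref{lem:widehatTa}) holds for a single Toeplitz operator, not for products, and there is no estimate relating $P(T^{(\lambda)}_{\widehat{a_\ell}})$ to $S$ or to $T$. There is also a secondary issue in your estimate of $\|Sf_i - Tf_i\|_\lambda$: a single $S_0$ chosen to be close to $T$ on the finite set $\{f_i\}$ need not be close to $T$ uniformly on the compact orbits $\{\pi_\lambda(A)^{-1}f_i : A\in H\}$; one must enlarge the finite test set to an $\varepsilon$-net of these orbits (and keep $\|S_0\|$ bounded) before the uniform estimate is legitimate.

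The clean way to make your approach work is to average \emph{linear combinations}, not polynomials, and to use the ultraweak (weak-$*$) topology. The linear span $\{T^{(\lambda)}_a : a\in L^\infty(\B^n)\}$ is ultraweakly dense in $B(\cA^2_\lambda(\B^n))$ (injectivity of the Berezin transform on trace class operators plus Hahn--Banach). The averaging map $T\mapsto \widehat T$ of \eqref{eq:widehatT} is a norm-one idempotent that is ultraweakly continuous, has range $\End_H(\cA^2_\lambda(\B^n))$ (Lemma~\ref{lem:widehatT}), and sends $T^{(\lambda)}_a$ to $T^{(\lambda)}_{\widehat a}$ (Lemma~\ref{lem:widehatTa}). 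A continuous idempotent maps a dense set to a dense subset of its range, so $\{T^{(\lambda)}_b : b\in L^\infty(\B^n)^H\}$ is ultraweakly dense in $\End_H(\cA^2_\lambda(\B^n))$. Finally, since $\cT^{(\lambda)}(L^\infty(\B^n)^H)$ is a $C^*$-algebra and $\End_H(\cA^2_\lambda(\B^n))$ is a von Neumann algebra, the ultraweak, weak and strong closures of $\cT^{(\lambda)}(L^\infty(\B^n)^H)$ coincide, yielding the asserted strong density. This bypasses entirely the interchange of Haar integration with $C^*$-algebra generation that your sketch cannot justify.
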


As in the previous results, consider a closed subgroup $H$ of $\U(n)$. Let us denote by $\mu$ a Haar measure for the group $H$. Since $H$ is compact, $\mu$ is bi-invariant and finite. Hence, we will assume for simplicity that $\mu$ is a probability measure. For every $\lambda > -1$ and for every $T \in B(\cA^2_\lambda(\B^n))$ we consider the operator $\widehat{T}$ defined by
\begin{equation}\label{eq:widehatT}
    \langle \widehat{T}f, g\rangle_\lambda =
    \int_H \langle \pi_\lambda(A) T \pi_\lambda(A)^{-1} f, g\rangle_\lambda \dif \mu(A),
\end{equation}
for every $f, g \in \cA^2_\lambda(\B^n)$. The next result is an easy consequence of the definition provided by \eqref{eq:widehatT}.

\begin{lemma}\label{lem:widehatT}
    Let $H$ be a closed subgroup of $\U(n)$. Then, for every $\lambda > -1$, the assignment $T \mapsto \widehat{T}$ given by \eqref{eq:widehatT} is a linear map that satisfies the following properties.
    \begin{enumerate}
      \item For every $T \in B(\cA^2_\lambda(\B^n))$, the operator $\widehat{T}$ belongs to $\End_H(\cA^2_\lambda(\B^n))$. In other words, $\widehat{T}$ is a bounded intertwining operator for the restriction $\pi_\lambda|_H$. Furthermore, we have $\|\widehat{T}\| \leq \|T\|$.
      \item If $T \in \End_H(\cA^2_\lambda(\B^n))$, then $\widehat{T} = T$.
    \end{enumerate}
\end{lemma}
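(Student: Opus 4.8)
The plan is to first establish that \eqref{eq:widehatT} genuinely defines a bounded operator $\widehat{T}$ on $\cA^2_\lambda(\B^n)$, and then to read off the two asserted properties by direct computation, the one nontrivial ingredient being the invariance of the Haar probability measure $\mu$.

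For the well-definedness, I would fix $f, g \in \cA^2_\lambda(\B^n)$ and observe that, since $\pi_\lambda$ is continuous in the strong operator topology and $T$ is bounded, the map $A \mapsto \langle \pi_\lambda(A) T \pi_\lambda(A)^{-1} f, g\rangle_\lambda$ is continuous on the compact group $H$, hence $\mu$-integrable; moreover its modulus is bounded by $\|T\|\,\|f\|_\lambda\,\|g\|_\lambda$ uniformly in $A$, using that each $\pi_\lambda(A)$ is unitary. The right-hand side of \eqref{eq:widehatT} is therefore a well-defined sesquilinear form in $(f,g)$, bounded by $\|T\|\,\|f\|_\lambda\,\|g\|_\lambda$ because $\mu(H) = 1$. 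By the Riesz representation theorem there is a unique $\widehat{T} \in B(\cA^2_\lambda(\B^n))$ satisfying \eqref{eq:widehatT}, and $\|\widehat{T}\| \leq \|T\|$. Linearity of $T \mapsto \widehat{T}$ is immediate from linearity of the integral.

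For the intertwining property in (1), fix $B \in H$. Starting from $\langle \widehat{T}\,\pi_\lambda(B)f, g\rangle_\lambda = \int_H \langle \pi_\lambda(A) T \pi_\lambda(A)^{-1}\pi_\lambda(B) f, g\rangle_\lambda \dif\mu(A)$, I would substitute $A \mapsto BA$ and invoke left-invariance of $\mu$. Using $\pi_\lambda(BA) = \pi_\lambda(B)\pi_\lambda(A)$ and $\pi_\lambda(BA)^{-1}\pi_\lambda(B) = \pi_\lambda(A)^{-1}$, the integrand becomes $\langle \pi_\lambda(B)\,\pi_\lambda(A) T \pi_\lambda(A)^{-1} f, g\rangle_\lambda = \langle \pi_\lambda(A) T \pi_\lambda(A)^{-1} f, \pi_\lambda(B)^{-1} g\rangle_\lambda$; integrating and comparing with \eqref{eq:widehatT} applied to the pair $(f, \pi_\lambda(B)^{-1}g)$ gives $\langle \widehat{T}\,\pi_\lambda(B)f, g\rangle_\lambda = \langle \widehat{T} f, \pi_\lambda(B)^{-1} g\rangle_\lambda = \langle \pi_\lambda(B)\widehat{T} f, g\rangle_\lambda$. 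Since $f, g$ and $B \in H$ are arbitrary, $\widehat{T}\,\pi_\lambda(B) = \pi_\lambda(B)\,\widehat{T}$ for all $B \in H$, i.e.~$\widehat{T} \in \End_H(\cA^2_\lambda(\B^n))$.

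Property (2) is then immediate: if $T \in \End_H(\cA^2_\lambda(\B^n))$, then $\pi_\lambda(A) T \pi_\lambda(A)^{-1} = T$ for every $A \in H$, so the integrand in \eqref{eq:widehatT} equals the constant $\langle Tf, g\rangle_\lambda$, and since $\mu(H)=1$ we get $\langle \widehat{T}f, g\rangle_\lambda = \langle Tf, g\rangle_\lambda$ for all $f,g$, whence $\widehat{T}=T$. I do not expect a genuine obstacle in this argument; the only point demanding a little care is the change of variables $A \mapsto BA$ in the Haar integral, which is legitimate precisely because $\mu$ is invariant (the text already records that $\mu$ is bi-invariant, so either one-sided invariance suffices).
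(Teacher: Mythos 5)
Your proof is correct and is precisely the standard argument the paper has in mind when it calls the lemma an easy consequence of \eqref{eq:widehatT}; the paper itself gives no explicit proof. Your treatment of well-definedness via the bounded sesquilinear form, the change of variables $A \mapsto BA$ using left-invariance of $\mu$, and the observation that the integrand is constant when $T$ is already intertwining are all sound, so the argument fills in the details the paper leaves implicit.
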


Following a similar construction, for every $a \in L^\infty(\B^n)$ we denote
\begin{equation}\label{eq:widehata}
    \widehat{a}(z) = \int_H a \circ A^{-1}(z) \dif \mu(A),
\end{equation}
for every $z \in \B^n$. Then, it is immediately seen that $\widehat{a} \in L^\infty(\B^n)^H$. Finally, we have the next easy to prove fact (see for example \cite{DOQJFA}).

\begin{lemma}\label{lem:widehatTa}
    With the previous notation and for a closed subgroup $H$ of $\U(n)$ we have
    \[
        \widehat{T_a^{(\lambda)}} = T_{\widehat{a}}^{(\lambda)},
    \]
    for every $\lambda > -1$ and for every $a \in L^\infty(\B^n)$.
\end{lemma}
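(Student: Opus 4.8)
The plan is to verify the identity $\widehat{T_a^{(\lambda)}} = T_{\widehat{a}}^{(\lambda)}$ by testing both sides against arbitrary pairs $f,g \in \cA^2_\lambda(\B^n)$ using the defining formula \eqref{eq:widehatT}. First I would write
\[
    \langle \widehat{T_a^{(\lambda)}}f, g\rangle_\lambda
    = \int_H \langle \pi_\lambda(A) T_a^{(\lambda)} \pi_\lambda(A)^{-1} f, g\rangle_\lambda \dif \mu(A),
\]
and then apply Lemma~\ref{lem:U(n)equiv-Toeplitz}, which identifies $\pi_\lambda(A) T_a^{(\lambda)} \pi_\lambda(A)^{-1}$ with $T_{A\cdot a}^{(\lambda)}$, where $A\cdot a = a \circ A^{-1}$. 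This turns the integrand into $\langle T_{A\cdot a}^{(\lambda)} f, g\rangle_\lambda = \langle (A\cdot a) f, g\rangle_\lambda$, the last equality being the definition of the Toeplitz operator paired with $g \in \cA^2_\lambda(\B^n)$ (the Bergman projection is self-adjoint and fixes $g$).

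Next I would recognize $\langle (A\cdot a) f, g\rangle_\lambda = \int_{\B^n} (a\circ A^{-1})(z) f(z) \overline{g(z)} \dif v_\lambda(z)$ and interchange the order of integration between $H$ and $\B^n$. This is legitimate by Fubini's theorem: the integrand is jointly measurable in $(A,z)$, and it is dominated by $\|a\|_\infty |f(z)||g(z)|$, which is $\mu \times v_\lambda$-integrable since $\mu$ is a probability measure and $f, g \in L^2(\B^n, v_\lambda)$ (so $fg \in L^1$ by Cauchy--Schwarz). After swapping, the inner integral $\int_H (a\circ A^{-1})(z)\dif\mu(A)$ is exactly $\widehat{a}(z)$ by definition \eqref{eq:widehata}, yielding $\langle \widehat{a} f, g\rangle_\lambda = \langle T_{\widehat{a}}^{(\lambda)} f, g\rangle_\lambda$. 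Since $f, g$ were arbitrary, the two operators coincide.

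The argument is genuinely routine; the only point requiring a moment's care is the application of Fubini's theorem, which is where one must confirm the joint measurability of $(A,z) \mapsto (a\circ A^{-1})(z)$ and the integrability bound. Joint measurability follows from continuity of the $\U(n)$-action on $\B^n$ together with measurability of $a$; the integrability bound is the Cauchy--Schwarz estimate just described. Everything else is a direct chain of definitions and the already-established equivariance Lemma~\ref{lem:U(n)equiv-Toeplitz}, so I do not anticipate any real obstacle — this is essentially the standard averaging identity recorded, for instance, in \cite{DOQJFA}.
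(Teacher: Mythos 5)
Your proof is correct, and it supplies exactly the standard argument that the paper elects not to reproduce (the paper simply cites \cite{DOQJFA}). Your chain of identities — invoke Lemma~\ref{lem:U(n)equiv-Toeplitz} to rewrite the conjugate as $T^{(\lambda)}_{A\cdot a}$, unwind the Toeplitz pairing to $\langle (A\cdot a)f, g\rangle_\lambda$ using the self-adjointness of $B_\lambda$ and the fact that $g$ is already holomorphic, apply Fubini to exchange $\int_H$ with $\int_{\B^n}$, and recognize $\widehat{a}$ — is precisely the expected route, and your justification of Fubini (joint measurability of $(A,z)\mapsto a(A^{-1}z)$ from continuity of the $\U(n)$-action, and the $L^1$ dominating function $\|a\|_\infty |f||g|$ against the finite product measure $\mu\times v_\lambda$) is the right level of care.
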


\begin{remark}
    Note that the definitions provided by \eqref{eq:widehatT} and \eqref{eq:widehata} depend on the subgroup $H$ under consideration. Even though such subgroup is not indicated in the expressions $\widehat{T}$ and $\widehat{a}$, it will be either specified in advance or easy to determine from the context in the rest of this work.
\end{remark}

\section{Isotypic decompositions}\label{sec:isotypic}
Let $H$ be a closed subgroup of $\U(n)$ and $V$ an irreducible $H$-module. The isotypic component associated to $V$ for the restriction $\pi_\lambda|_H$ is the sum of all the $H$-submodules of $\cA^2_\lambda(\B^n)$ that are isomorphic to the $H$-module $V$. It is well known (see \cite{BtD}) that the compactness of $H$ implies that $\cA^2_\lambda(\B^n)$ is the Hilbert direct sum of the isotypic components of $\pi_\lambda|_H$. Such direct sum is called the isotypic decomposition of the restriction $\pi_\lambda|_H$. We say that the isotypic decomposition, as well as the representation $\pi_\lambda|_H$ or the $H$-action, is multiplicity-free if each isotypic component is in fact irreducible.

We now describe the isotypic decompositions for the representation $\pi_\lambda$ of $\U(n)$ and its restriction to some of its subgroups. These decompositions will be given in terms of the space of complex holomorphic polynomials on $\C^n$, which we will denote by $\cP(\C^n)$, and the subspaces of homogeneous polynomials of degree $\kappa$, that we will be denote by $\cP_\kappa(\C^n)$ for every $\kappa \in \N$. The elements of $\cP(\C^n)$ can be restricted to $\B^n$ and thus can be considered to be elements of the Bergman spaces. In other words, we have $\cP(\C^n) \subset \cA^2_\lambda(\B^n)$, for every $\lambda > -1$. We will use this fact without further mention in the rest of this work.

We first consider the subgroup $\T^n$.

\begin{proposition}\label{prop:isotypicTn}
    For every $\lambda > -1$, the isotypic decomposition of the restriction $\pi_\lambda|_{\T^n}$ is given by
    \[
        \cA^2_\lambda(\B^n)
            = \bigoplus_{\alpha \in \N^n} \C z^\alpha.
    \]
    In particular, the isotypic decomposition is multiplicity-free.
\end{proposition}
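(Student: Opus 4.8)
The plan is to compute the isotypic decomposition of $\pi_\lambda|_{\T^n}$ by exhibiting an explicit orthogonal basis of $\cA^2_\lambda(\B^n)$ that consists of $\T^n$-eigenvectors, and then checking that distinct basis vectors carry inequivalent characters of $\T^n$. First I would recall the standard fact that the monomials $z^\alpha = z_1^{\alpha_1}\cdots z_n^{\alpha_n}$, with $\alpha \in \N^n$, form an orthogonal basis of $\cA^2_\lambda(\B^n)$: holomorphy forces every element to be a convergent power series $\sum_\alpha c_\alpha z^\alpha$, and a direct computation with the measure $v_\lambda$ in polar-type coordinates shows $\langle z^\alpha, z^\beta\rangle_\lambda = 0$ for $\alpha \neq \beta$ (this is classical, e.g.\ from \cite{BtD}-style arguments or standard Bergman space theory, and the paper elsewhere uses these inner products). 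Consequently $\cA^2_\lambda(\B^n) = \bigoplus_{\alpha \in \N^n} \C z^\alpha$ as a Hilbert direct sum.

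Next I would identify the $\T^n$-action on each line $\C z^\alpha$. By the definition $\pi_\lambda(t)(f) = f \circ t^{-1}$, and since $t^{-1}\cdot z = (t_1^{-1}z_1, \dots, t_n^{-1}z_n)$ for $t \in \T^n$, we get
\[
    \pi_\lambda(t)(z^\alpha) = (t_1^{-1}z_1)^{\alpha_1}\cdots(t_n^{-1}z_n)^{\alpha_n} = t^{-\alpha} z^\alpha,
\]
so $\C z^\alpha$ is a $\T^n$-invariant one-dimensional subspace on which $\T^n$ acts by the character $t \mapsto t^{-\alpha}$. Since $\T^n$ is abelian, every irreducible $\T^n$-module is one-dimensional, hence each $\C z^\alpha$ is irreducible. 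The characters $t \mapsto t^{-\alpha}$ for distinct $\alpha \in \N^n$ are pairwise distinct (the map $\alpha \mapsto (t \mapsto t^{-\alpha})$ is injective on $\N^n$), so the lines $\C z^\alpha$ are pairwise non-isomorphic as $\T^n$-modules. Therefore the decomposition $\cA^2_\lambda(\B^n) = \bigoplus_{\alpha \in \N^n}\C z^\alpha$ is precisely the isotypic decomposition: each isotypic component is a single line $\C z^\alpha$, and in particular the decomposition is multiplicity-free.

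I do not anticipate a serious obstacle here; the only point requiring genuine care is the orthogonality of the monomials with respect to $v_\lambda$, which is a routine but slightly tedious integral computation and can simply be cited as standard. The conceptual content — that an abelian compact group acting on a Hilbert space with a distinguished eigenbasis has isotypic components given by grouping eigenvectors with equal characters, and that here all characters are distinct — is immediate once the eigenbasis and the action formula are in hand.
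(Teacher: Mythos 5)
Your proposal is correct and follows essentially the same route as the paper: cite the orthogonality of the monomial basis, compute $\pi_\lambda(t) z^\alpha = t^{-\alpha} z^\alpha$, and observe that the resulting characters of the abelian group $\T^n$ are pairwise distinct, so the lines $\C z^\alpha$ are the (one-dimensional, hence irreducible) isotypic components. The only difference is that you spell out a few details that the paper treats as immediate.
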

\begin{proof}
    The Hilbert direct sum follows from the well known fact that the monomials $z^\alpha$, for $\alpha \in \N^n$, are an orthogonal basis of the weighted Bergman space $\cA^2_\lambda(\B^n)$. On the other hand, for any given $\alpha \in \N^n$ we have
    \[
        t\cdot z^\alpha = t^{-\alpha} z^\alpha
    \]
    for every $t \in \T^n$. Hence, the $1$-dimensional space $\C z^\alpha$ is an irreducible $\T^n$-module with character $t \mapsto t^{-\alpha}$. In particular, these spaces are inequivalent to each other as $\T^n$-modules.
\end{proof}

We now state the corresponding result for the group $\U(n)$.

\begin{proposition}\label{prop:isotypicU(n)}
    For every $\lambda > -1$, the isotypic decomposition of the unitary representation $\pi_\lambda$ of $\U(n)$ is given by
    \[
        \cA^2_\lambda(\B^n)
            = \bigoplus_{\kappa \in \N}
                \cP_\kappa(\C^n).
    \]
    Furthermore, the isotypic decomposition of the $\U(n)$-action is multiplicity-free.
\end{proposition}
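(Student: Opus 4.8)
The plan is to establish two things: that $\cA^2_\lambda(\B^n) = \bigoplus_{\kappa \in \N} \cP_\kappa(\C^n)$ as a Hilbert direct sum of $\U(n)$-submodules, and that each $\cP_\kappa(\C^n)$ is irreducible as a $\U(n)$-module. The Hilbert direct sum is essentially already in hand: the monomials $z^\alpha$ form an orthogonal basis of $\cA^2_\lambda(\B^n)$ (used in the proof of Proposition~\ref{prop:isotypicTn}), and grouping them by total degree $|\alpha| = \kappa$ gives $\cP_\kappa(\C^n)$ as the span of $\{z^\alpha : |\alpha| = \kappa\}$; orthogonality of monomials of different degrees then yields the orthogonal direct sum. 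That each $\cP_\kappa(\C^n)$ is $\U(n)$-invariant follows because the action $\pi_\lambda(A)f = f \circ A^{-1}$ sends a homogeneous polynomial of degree $\kappa$ to a homogeneous polynomial of degree $\kappa$, the map $A^{-1}$ being linear.

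The substantive point, and the step I expect to be the main obstacle, is the irreducibility of $\cP_\kappa(\C^n)$ under $\U(n)$ and the fact that distinct degrees give inequivalent modules. The inequivalence is immediate from dimension count once irreducibility is known, since $\dim \cP_\kappa(\C^n) = \binom{n+\kappa-1}{\kappa}$ is strictly increasing in $\kappa$ (or one can invoke the already-established $\T^n$-isotypic decomposition: the characters $t \mapsto t^{-\alpha}$ appearing in $\cP_\kappa$ have $|\alpha| = \kappa$, so no character is shared between different degrees). For irreducibility I would argue as follows. It suffices to show that any nonzero $\U(n)$-submodule $W \subseteq \cP_\kappa(\C^n)$ contains the monomial $z_1^\kappa$, since the $\U(n)$-orbit of $z_1^\kappa$ spans all of $\cP_\kappa(\C^n)$ (every monomial $z^\alpha$ with $|\alpha|=\kappa$, and more generally $(z\cdot \overline{w})^\kappa$ for $w \in \C^n$, is a linear combination of $\U(n)$-translates of $z_1^\kappa$; alternatively the functions $z \mapsto (z\cdot\overline{w})^\kappa$ already span $\cP_\kappa(\C^n)$ and each is, up to scalar, $\pi_\lambda(A)(z_1^\kappa)$ for a suitable $A$). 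To see that $W$ contains $z_1^\kappa$: $W$ is in particular $\T^n$-invariant, so by Proposition~\ref{prop:isotypicTn} it is a direct sum of some of the lines $\C z^\alpha$ with $|\alpha| = \kappa$; pick one such $\alpha$ with $\C z^\alpha \subseteq W$, and use a permutation matrix in $\U(n)$ together with a continuous rotation in the relevant two-plane coordinates to move $z^\alpha$ into the span of $z_1^\kappa$ — more cleanly, act by a one-parameter unitary group mixing two coordinates and extract $z_1^\kappa$ as a coefficient, using that $W$ is closed.

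A cleaner route to irreducibility, which I would actually prefer to write down, uses the reproducing-kernel structure directly. The space $\cP_\kappa(\C^n)$ is a finite-dimensional reproducing kernel Hilbert space (with respect to the Fischer/Bargmann inner product, or equivalently the restriction of the normalized inner products) whose kernel is the $\U(n)$-invariant polynomial $(z\cdot\overline{w})^\kappa$ up to a constant. If $W \subseteq \cP_\kappa(\C^n)$ is a nonzero $\U(n)$-submodule, its orthogonal complement $W^\perp$ inside $\cP_\kappa(\C^n)$ is also a submodule, and the reproducing kernel of $\cP_\kappa(\C^n)$ splits as the sum of the reproducing kernels of $W$ and $W^\perp$. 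Both summands are $\U(n)$-invariant kernels (by equivariance of the orthogonal projections onto $W$ and $W^\perp$), hence each is a $\U(n)$-invariant function of $(z,w)$, so by the classical description of $\U(n)$-invariants each must be a polynomial in $z\cdot\overline{w}$ that is bihomogeneous of bidegree $(\kappa,\kappa)$, i.e. a scalar multiple of $(z\cdot\overline{w})^\kappa$. Since the two scalars sum to the constant appearing in the kernel of $\cP_\kappa(\C^n)$ and a reproducing kernel of a nonzero space cannot vanish, exactly one of $W$, $W^\perp$ has the full kernel and the other has zero kernel, i.e. is $\{0\}$; thus $W = \cP_\kappa(\C^n)$. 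The main obstacle in either approach is the invariant-theory input — identifying the $\U(n)$-invariant polynomials (or kernels) on $\C^n \times \C^n$ with polynomials in $z\cdot\overline{w}$ — which is standard but is the one nontrivial ingredient; everything else is bookkeeping with the already-established monomial basis and the $\T^n$-decomposition of Proposition~\ref{prop:isotypicTn}.
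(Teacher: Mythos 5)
Your proposal is correct, and the structural skeleton (Hilbert direct sum from the monomial basis, irreducibility of each $\cP_\kappa(\C^n)$, inequivalence by dimension or by distinct $\T^n$-characters) matches the paper's. The difference is in how irreducibility is obtained: the paper simply cites it as a classical fact, referring to \cite{GW} and \cite{QRadial}, and handles $n=1$ separately by falling back on Proposition~\ref{prop:isotypicTn}. You instead supply a self-contained argument. Of your two routes, the first (moving an arbitrary monomial $z^\alpha$ to $z_1^\kappa$ via coordinate rotations) is slightly imprecise as you initially phrase it — there is no single unitary carrying $z^\alpha$ to a multiple of $z_1^\kappa$ unless $\alpha$ is already concentrated in one coordinate — but your refinement (differentiate or Fourier-analyze a one-parameter rotation subgroup and use closedness of $W$; effectively a Lie-algebra raising/lowering argument) does repair this. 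Your second route, via the reproducing kernel of $\cP_\kappa(\C^n)$ splitting as $K_W + K_{W^\perp}$ with both summands $\U(n)$-invariant holomorphic–antiholomorphic bihomogeneous kernels and hence each a scalar multiple of $(z\cdot\overline{w})^\kappa$, is clean and correct; one should just note that the needed computation is that for the $\cA^2_\lambda$ inner product the kernel of $\cP_\kappa$ is indeed proportional to $(z\cdot\overline w)^\kappa$, which follows from the formula $\|z^\alpha\|_\lambda^2 = \alpha!\,\Gamma(n+\lambda+1)/\Gamma(n+\lambda+\kappa+1)$ and the multinomial theorem. What the paper's approach buys is brevity and a clean delegation to standard references; what yours buys is self-containment at the cost of one piece of invariant theory (the classification of $\U(n)$-invariant sesqui-holomorphic kernels), which you correctly flag as the only nontrivial input.
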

\begin{proof}
    The Hilbert direct sum follows from Proposition~\ref{prop:isotypicTn}. On the other hand, it is well known that $\cP_\kappa(\C^n)$ is an irreducible $\U(n)$-module (see \cite{GW} and \cite{QRadial}). For $n \geq 2$, the spaces $\cP_\kappa(\C^n)$ all have different dimensions and so are inequivalent as $\U(n)$-modules. The case $n = 1$ is already covered by Proposition~\ref{prop:isotypicTn} since $\U(1) = \T$ acting on $\B^1$.
\end{proof}

Some other subgroups of $\U(n)$ will be of interest to us. Particularly, those associated with partitions of $n$.

Let $\kb = (k_1, \dots, k_m) \in \Z_+^m$ be a partition of $n$. In other words, $k_1 + \dots + k_m = n$, and each $k_j$ is a positive integer. Such a partition induces a decomposition
\[
    \C^n = \C^{k_1}\times\dots\times\C^{k_m}.
\]
This yields a corresponding coordinate decomposition such that for $z \in \C^n$ we have
\[
    z = (z_{(1)}, \dots, z_{(m)})
\]
where $z_{(j)} \in \C^{k_j}$, for every $j=1,\dots,m$. For simplicity, we can always assume that $k_1 \leq k_2 \leq \dots \leq k_m$. In particular, we can associate to the partition $\kb$ the smallest non-negative integer $h$ such that $k_j = 1$ if and only if $j \leq h$.

In the rest of this work we consider a fixed partition $\kb \in \Z_+^m$ of $n$ together with its associated integer $h$ defined above. Also, for every $\beta \in \Z^\ell_+$ we use the standard notation $|\beta| = \beta_1 + \dots + \beta_\ell$.

For our fixed partition $\kb$ we will consider the following product of unitary groups
\begin{align*}
    \U(\kb) &= \U(k_1)\times\dots\times\U(k_m) \\
        &= \T^h \times \U(k_{h+1})\times\dots\times\U(k_m).
\end{align*}
Observe that the group $\U(\kb)$ can be canonically realized as a block diagonal subgroup of $\U(n)$. We will use this realization in the rest of this work.

On the other hand, for every $\kapb \in \N^m$ we will denote by $\cP_\kapb(\C^n)$ the linear span of the monomials $z^\alpha$ such that $|\alpha_{(j)}| = \kappa_j$ for all $j=1, \dots, m$. We have the following easy to prove alternative description of $\cP_\kapb(\C^n)$.

\begin{lemma}\label{lem:cPkapb}
    Let $\kb \in \Z_+^m$ be a partition of $n$ and let $\kapb \in \N^m$ be given. Then, a polynomial $p(z)$ belongs to $\cP_\kapb(\C^n)$ if and only if for every $j = 1, \dots, m$, and every fixed $(z_{(1)}, \dots, z_{(j-1)}, z_{(j+1)}, \dots, z_{(m)})$ the polynomial in $w \in \C^{k_j}$ given by
    \[
         w \mapsto p(z_{(1)}, \dots, z_{(j-1)}, w, z_{(j+1)}, \dots, z_{(m)})
    \]
    is homogeneous of degree $\kappa_j$. In particular, $\cP_\kapb(\C^n)$ is an invariant subspace for the restriction $\pi_\lambda|_{\U(\kb)}$.
\end{lemma}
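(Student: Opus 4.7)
The plan is to prove the two implications of the biconditional separately, and then derive the invariance as a short corollary.

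For the forward implication, I would invoke linearity together with the definition of $\cP_\kapb(\C^n)$ as the linear span of monomials $z^\alpha$ with $|\alpha_{(j)}| = \kappa_j$ for every $j$. Each such monomial, evaluated with all blocks except the $j$-th fixed, reduces to a constant times $w^{\alpha_{(j)}}$, which is homogeneous of degree $|\alpha_{(j)}| = \kappa_j$ in $w$; the property thus passes to arbitrary linear combinations.

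For the converse, I would decompose an arbitrary $p \in \cP(\C^n)$ uniquely as a finite sum
\[
    p = \sum_{\kapb' \in \N^m} p_{\kapb'}, \qquad p_{\kapb'} \in \cP_{\kapb'}(\C^n),
\]
obtained by grouping monomials according to the multi-index $(|\alpha_{(1)}|, \dots, |\alpha_{(m)}|)$. Fix an index $j$ and fix the remaining blocks arbitrarily. By the forward direction applied to each $p_{\kapb'}$, the restriction $w \mapsto p_{\kapb'}(z_{(1)}, \dots, w, \dots, z_{(m)})$ is homogeneous of degree $\kappa'_j$ in $w$. By uniqueness of the homogeneous decomposition of polynomials in $w$, the hypothesis that $w \mapsto p(z_{(1)}, \dots, w, \dots, z_{(m)})$ is homogeneous of degree $\kappa_j$ forces the vanishing of $p_{\kapb'}(z_{(1)}, \dots, w, \dots, z_{(m)})$ whenever $\kappa'_j \neq \kappa_j$. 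Since this identity holds for every choice of the fixed blocks, it forces $p_{\kapb'} = 0$ as a polynomial in all the $z$ variables. Running the argument for each $j = 1, \dots, m$ yields $p = p_\kapb \in \cP_\kapb(\C^n)$.

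The invariance statement then follows at once from the characterization. For $A = \mathrm{diag}(A_1, \dots, A_m) \in \U(\kb)$ and $p \in \cP_\kapb(\C^n)$, one has $\pi_\lambda(A)p(z) = p(A_1^{-1}z_{(1)}, \dots, A_m^{-1}z_{(m)})$. Its restriction in the $j$-th block variable $w$, with the remaining blocks fixed, is the composition of the map $u \mapsto p(A_1^{-1}z_{(1)}, \dots, u, \dots, A_m^{-1}z_{(m)})$, which is homogeneous of degree $\kappa_j$ in $u$ by the characterization applied to $p$, with the linear map $w \mapsto A_j^{-1}w$; this composition is again homogeneous of degree $\kappa_j$, so $\pi_\lambda(A)p \in \cP_\kapb(\C^n)$. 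The only mildly delicate point in the whole argument is the uniqueness step in the converse, where one must upgrade the vanishing of homogeneous components in $w$, for every fixed value of the other blocks, to vanishing as polynomials in all variables; this goes through cleanly because the block variables decouple in the monomial expansion.
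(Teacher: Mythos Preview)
Your argument is correct; the paper itself omits the proof entirely, simply introducing the lemma as an ``easy to prove alternative description'' of $\cP_\kapb(\C^n)$. Your write-up supplies exactly the routine verification one would expect, and your closing remark about the decoupling of block variables in the monomial expansion correctly handles the only point that needs any care in the converse.
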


The next realization of the space $\cP_\kapb(\C^n)$ as an irreducible $\U(\kb)$-module will be very useful. This result depends on the characterization of irreducible representations of a product of groups as an outer tensor product of irreducible representations. We refer to \cite{BtD} for further details and definitions.

\begin{proposition}\label{prop:Pkapb-tensorproduct}
    Let $\kb \in \Z_+^m$ be a partition of $n$. Then, for every $\kapb \in \N^m$ the linear transformation
    \begin{align*}
        U_\kapb : \bigotimes_{j=1}^m \cP_{\kappa_j}(\C^{k_j})
            &\rightarrow \cP_\kapb(\C^n) \\
        p_1\otimes \dots \otimes p_m &\mapsto
            q(z) = p_1(z_{(1)})\dots p_m(z_{(m)})
    \end{align*}
    is an isomorphism of $\U(\kb)$-modules, where the domain carries the outer tensor product structure over $\U(\kb) = \U(k_1) \times \dots \times \U(k_m)$. In particular, $\cP_\kapb(\C^n)$ is an irreducible $\U(\kb)$-module.
\end{proposition}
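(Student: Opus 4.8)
The plan is to verify that $U_\kapb$ is a well-defined linear isomorphism and then check $\U(\kb)$-equivariance; irreducibility of $\cP_\kapb(\C^n)$ will follow formally from the fact that the outer tensor product of irreducible modules of the factor groups is an irreducible module of the product group (this is the characterization cited from \cite{BtD}, and each $\cP_{\kappa_j}(\C^{k_j})$ is an irreducible $\U(k_j)$-module by Proposition~\ref{prop:isotypicU(n)} applied in dimension $k_j$).

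First I would address well-definedness: the assignment $(p_1,\dots,p_m) \mapsto p_1(z_{(1)})\cdots p_m(z_{(m)})$ is multilinear in $(p_1,\dots,p_m)$, so by the universal property of the tensor product it induces a well-defined linear map $U_\kapb$ on $\bigotimes_{j=1}^m \cP_{\kappa_j}(\C^{k_j})$. The image lands in $\cP_\kapb(\C^n)$: if each $p_j$ is homogeneous of degree $\kappa_j$ in the variables $z_{(j)} \in \C^{k_j}$, then for the product $q(z) = p_1(z_{(1)})\cdots p_m(z_{(m)})$, fixing all block variables except the $j$-th leaves a scalar multiple of $p_j(\cdot)$, which is homogeneous of degree $\kappa_j$; by Lemma~\ref{lem:cPkapb} this means $q \in \cP_\kapb(\C^n)$. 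Next, bijectivity: surjectivity is clear since any monomial $z^\alpha$ with $|\alpha_{(j)}| = \kappa_j$ factors as $z_{(1)}^{\alpha_{(1)}} \cdots z_{(m)}^{\alpha_{(m)}}$, hence lies in the image; and $U_\kapb$ sends the tensor-product basis $\{z_{(1)}^{\alpha_{(1)}} \otimes \cdots \otimes z_{(m)}^{\alpha_{(m)}} : |\alpha_{(j)}| = \kappa_j\}$ bijectively to the monomial basis of $\cP_\kapb(\C^n)$, so it is injective as well. (Alternatively one just compares dimensions once surjectivity is known, since $\dim \bigotimes_j \cP_{\kappa_j}(\C^{k_j}) = \prod_j \dim \cP_{\kappa_j}(\C^{k_j})$ equals the number of such multi-indices $\alpha$.)

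Then I would verify equivariance. The outer tensor product structure is: for $A = (A_1,\dots,A_m) \in \U(\kb)$, the action on $p_1 \otimes \cdots \otimes p_m$ is $(A_1 \cdot p_1) \otimes \cdots \otimes (A_m \cdot p_m)$ where $A_j \cdot p_j = p_j \circ A_j^{-1}$. On the other side, $\pi_\lambda(A) q = q \circ A^{-1}$, and since $A$ acts block-diagonally, $A^{-1} z = (A_1^{-1} z_{(1)}, \dots, A_m^{-1} z_{(m)})$, so
\[
    \pi_\lambda(A)\bigl(U_\kapb(p_1 \otimes \cdots \otimes p_m)\bigr)(z)
        = p_1(A_1^{-1} z_{(1)}) \cdots p_m(A_m^{-1} z_{(m)})
        = U_\kapb\bigl((A_1 \cdot p_1) \otimes \cdots \otimes (A_m \cdot p_m)\bigr)(z),
\]
which is exactly $U_\kapb$ applied to the outer tensor action of $A$. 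Hence $U_\kapb$ intertwines the two $\U(\kb)$-representations. Finally, since each $\cP_{\kappa_j}(\C^{k_j})$ is an irreducible $\U(k_j)$-module, the outer tensor product $\bigotimes_{j=1}^m \cP_{\kappa_j}(\C^{k_j})$ is an irreducible $\U(\kb)$-module by the standard characterization of irreducibles of a direct product of compact groups, and therefore the isomorphic module $\cP_\kapb(\C^n)$ is irreducible as well.

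I do not anticipate a serious obstacle here — the argument is a matter of carefully threading the definitions (universal property of tensor products, the block-diagonal embedding of $\U(\kb)$, the outer tensor product convention). The one point that deserves a little care, rather than being an obstacle, is making sure the irreducibility of $\cP_{\kappa_j}(\C^{k_j})$ as a $\U(k_j)$-module is legitimately invoked in all the boundary cases: when $k_j = 1$ the group $\U(1) = \T$ and $\cP_{\kappa_j}(\C^1) = \C z_{(j)}^{\kappa_j}$ is one-dimensional, hence trivially irreducible, so the statement is uniform across all $j$.
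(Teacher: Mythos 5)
Your proposal is correct and follows essentially the same route as the paper: well-definedness via the tensor-product universal property, bijectivity by matching the monomial tensor basis to the monomial basis of $\cP_\kapb(\C^n)$, a direct check of $\U(\kb)$-equivariance, and irreducibility from the standard outer-tensor-product characterization cited from \cite{BtD}. The paper is terser (it merely asserts well-definedness and equivariance as clear), but the content is identical; your extra remarks on the image landing in $\cP_\kapb(\C^n)$ via Lemma~\ref{lem:cPkapb} and on the $k_j=1$ boundary case are harmless elaborations.
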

\begin{proof}
    That $U_\kapb$ is well defined is a consequence of the usual properties of the tensor product. On the other hand, $U_\kapb$ maps
    \[
        z_{(1)}^{\alpha_{(1)}} \otimes \dots
        \otimes z_{(m)}^{\alpha_{(m)}} \mapsto
        z^\alpha,
    \]
    where $\alpha \in \N^n$ satisfies $|\alpha_{(j)}| = \kappa_j$ for all $j=1, \dots, m$. In other words, it maps a basis onto a basis, and so $U_\kapb$ is an isomorphism. Furthermore, it is clear that the map $U_\kapb$ is $\U(\kb)$-equivariant.

    Since the $\U(\kb)$-module $\bigotimes_{j=1}^m \cP_{\kappa_j}(\C^{k_j})$ is an outer tensor product of irreducible modules it follows that it is itself irreducible (see \cite{BtD}). Hence, $\cP_\kapb(\C^n)$ is an irreducible $\U(\kb)$-module as well.
\end{proof}

In the rest of this work, and for every $\kapb \in \N^m$ as above, we will freely use the identification of $\cP_\kapb(\C^n)$ with the tensor product given by Proposition~\ref{prop:Pkapb-tensorproduct}.

We now describe the isotypic decomposition for the group $\U(\kb)$.

\begin{proposition}\label{prop:isotypicU(kb)}
    For every $\lambda > -1$, the isotypic decomposition of the restriction $\pi_\lambda|_{\U(\kb)}$ is given by
    \[
        \cA^2_\lambda(\B^n)
            = \bigoplus_{\kapb \in \N^m} \cP_\kapb(\C^n).
    \]
    Furthermore, this isotypic decomposition is multiplicity-free.
\end{proposition}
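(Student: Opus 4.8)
The plan is to combine the orthogonal monomial basis of $\cA^2_\lambda(\B^n)$ with the tensor product description from Proposition~\ref{prop:Pkapb-tensorproduct}, and then to separate the inequivalence of distinct summands by restricting the $\U(\kb)$-action to its center.

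First I would establish the Hilbert direct sum. By Proposition~\ref{prop:isotypicTn} (or the classical fact underlying it) the monomials $z^\alpha$, $\alpha \in \N^n$, form an orthogonal basis of $\cA^2_\lambda(\B^n)$. Partitioning $\N^n$ according to the map $\alpha \mapsto (|\alpha_{(1)}|, \dots, |\alpha_{(m)}|) \in \N^m$, and recalling that $\cP_\kapb(\C^n)$ is by definition the closed span of those $z^\alpha$ with $|\alpha_{(j)}| = \kappa_j$ for all $j$, we obtain $\cA^2_\lambda(\B^n) = \bigoplus_{\kapb \in \N^m} \cP_\kapb(\C^n)$ as a Hilbert direct sum. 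Each $\cP_\kapb(\C^n)$ is a $\U(\kb)$-submodule by Lemma~\ref{lem:cPkapb}, and it is an irreducible $\U(\kb)$-module by Proposition~\ref{prop:Pkapb-tensorproduct}.

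It then remains to check that $\cP_\kapb(\C^n)$ and $\cP_{\kapb'}(\C^n)$ are inequivalent $\U(\kb)$-modules whenever $\kapb \neq \kapb'$. For this I would restrict the $\U(\kb)$-action to its center, which consists of the block-scalar matrices $t = (t_1 I_{k_1}, \dots, t_m I_{k_m})$ with $t_j \in \T$ and which is a torus isomorphic to $\T^m$. Using $t\cdot z^\alpha = t^{-\alpha} z^\alpha$ from the proof of Proposition~\ref{prop:isotypicTn} together with $|\alpha_{(j)}| = \kappa_j$, such an element acts on all of $\cP_\kapb(\C^n)$ by the single scalar $t_1^{-\kappa_1}\cdots t_m^{-\kappa_m}$. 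Hence the central character of $\cP_\kapb(\C^n)$ is $t \mapsto t^{-\kapb}$, and these characters are pairwise distinct for distinct $\kapb$. Since isomorphic $\U(\kb)$-modules share the same central character, it follows that $\cP_\kapb(\C^n) \not\cong \cP_{\kapb'}(\C^n)$ for $\kapb \neq \kapb'$.

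Putting these together: each irreducible submodule $\cP_\kapb(\C^n)$ lies inside a unique isotypic component of $\pi_\lambda|_{\U(\kb)}$, distinct $\kapb$ land in distinct components by the previous paragraph, and their Hilbert sum exhausts $\cA^2_\lambda(\B^n)$; therefore the displayed decomposition is exactly the isotypic decomposition, and since each component is irreducible the decomposition is multiplicity-free. I do not expect a genuine obstacle here, since every ingredient has been prepared in the preceding sections; the only point requiring a moment of care is the inequivalence of the summands, which the restriction-to-the-center argument handles cleanly and uniformly over the partition (including the factors with $k_j = 1$, where $\U(1) = \T$).
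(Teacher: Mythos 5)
Your proof is correct. The Hilbert direct sum and the irreducibility of each $\cP_\kapb(\C^n)$ are obtained exactly as in the paper (monomial basis plus Lemma~\ref{lem:cPkapb} plus Proposition~\ref{prop:Pkapb-tensorproduct}). Where you diverge is in the inequivalence step: the paper argues via the classification of outer tensor products over a product group — namely that $\bigotimes_j \cP_{\kappa_j}(\C^{k_j}) \simeq \bigotimes_j \cP_{\kappa_j'}(\C^{k_j})$ as $\U(\kb)$-modules forces $\cP_{\kappa_j}(\C^{k_j}) \simeq \cP_{\kappa_j'}(\C^{k_j})$ for each $j$, whence $\kappa_j = \kappa_j'$ — while you restrict to the center $\T^m \subset \U(\kb)$ and observe that $\cP_\kapb(\C^n)$ has central character $t \mapsto t^{-\kapb}$, so distinct $\kapb$ force inequivalence. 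Your central-character argument is self-contained (it needs only Schur's Lemma, which the paper already invokes elsewhere) and, as you point out, it treats the factors with $k_j = 1$ and $k_j \geq 2$ uniformly, whereas the factor-by-factor inequivalence behind the paper's appeal to the outer tensor product classification rests on a dimension count for $k_j \geq 2$ and on $\T$-characters for $k_j = 1$. The paper's route is the one that naturally accompanies the tensor-product realization it has just established in Proposition~\ref{prop:Pkapb-tensorproduct}; yours is a slightly more elementary detour that anticipates the $\T^m$ analysis of Proposition~\ref{prop:isotypicTm}. Both are perfectly sound.
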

\begin{proof}
    From the definitions we have for every $\ell \in \N$
    \[
        \cP_\ell(\C^n)
            = \bigoplus_{\kapb \in \N^m, |\kapb| = \ell} \cP_\kapb(\C^n)
    \]
    and so the Hilbert direct sum from the statement follows from Proposition~\ref{prop:isotypicU(n)} and the orthogonality of the monomial basis. Also, Proposition~\ref{prop:Pkapb-tensorproduct} shows that such Hilbert direct sum is a sum of irreducible $\U(\kb)$-modules.

    Furthermore, by the properties of the outer tensor product of actions (see \cite{BtD}) and Proposition~\ref{prop:Pkapb-tensorproduct}, we have $\cP_\kapb(\C^n) \simeq \cP_{\kapb'}(\C^n)$ as modules over $\U(\kb)$ if and only if $\kapb = \kapb'$, and so the result follows.
\end{proof}

\begin{remark}\label{rmk:decomp_into_cPkapb}
    The Hilbert direct sum decomposition from Proposition~\ref{prop:isotypicU(kb)} was used in \cite{VasTm2018} (see equation (3.1) therein). However, in \cite{VasTm2018} it is not considered the fact that this is a multiplicity-free isotypic decomposition for a suitable subgroup in $\U(n)$, as we have proved in Proposition~\ref{prop:isotypicU(kb)} for the subgroup~$\U(\kb)$.
\end{remark}

For our given partition $\kb \in \Z_+^m$ and for every $j=1,\dots,m$, we will denote by $I_{(j)}$ the identity operator in $\C^{k_j}$, and we will use this notation in the rest of this work. We consider the subgroup of $\U(\kb)$ defined by
\[
    \T^m = \{ (t_1 I_{(1)}, \dots, t_m I_{(m)})
            \mid t_1, \dots, t_m \in \T\}.
\]
In other words, $\T^m$ is precisely the center of $\U(\kb)$. Furthermore, with respect to the natural inclusion $\U(\kb) \subset \U(n)$ we observe that $\T^m \subset \U(\kb) \cap \T^n$, where $\T^n$ is considered as the subgroup of diagonal matrices of $\U(n)$. In particular, $\T^m \subset \T^n$ by these identifications.

From now on, together with our fixed choice of the partition $\kb$, we also consider fixed the subgroups $\U(\kb)$ and $\T^m$ obtained through the use of $\kb$ with the previous constructions.

We now obtain the isotypic decomposition corresponding to $\T^m$.

\begin{proposition}\label{prop:isotypicTm}
    For every $\lambda > -1$, the isotypic decomposition of the restriction $\pi_\lambda|_{\T^m}$ is given by
    \[
        \cA^2_\lambda(\B^n)
            = \bigoplus_{\kapb \in \N^m} \cP_\kapb(\C^n).
    \]
\end{proposition}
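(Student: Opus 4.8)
The plan is to reuse the monomial basis already exploited in Proposition~\ref{prop:isotypicTn}, observe that each monomial is a $\T^m$-weight vector, and collect the monomials according to their $\T^m$-weight. Recall from the discussion preceding this proposition that $\T^m \subset \T^n$ under the identifications in force, so the Hilbert direct sum $\cA^2_\lambda(\B^n) = \bigoplus_{\alpha \in \N^n} \C z^\alpha$ of Proposition~\ref{prop:isotypicTn} is in particular a Hilbert direct sum of one-dimensional $\T^m$-invariant subspaces.

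Next I would compute the $\T^m$-action on $\C z^\alpha$. For $t = (t_1 I_{(1)}, \dots, t_m I_{(m)}) \in \T^m$ we have $\pi_\lambda(t)(z^\alpha) = z^\alpha \circ t^{-1}$, and since $t^{-1}$ acts on $z$ block by block as $z_{(j)} \mapsto t_j^{-1} z_{(j)}$, this equals $\bigl(\prod_{j=1}^m t_j^{-|\alpha_{(j)}|}\bigr) z^\alpha$. Hence $\C z^\alpha$ carries the character $\chi_\kapb : t \mapsto \prod_{j=1}^m t_j^{-\kappa_j}$, where $\kapb = (|\alpha_{(1)}|, \dots, |\alpha_{(m)}|) \in \N^m$. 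This character depends on $\alpha$ only through $\kapb$, and the assignment $\kapb \mapsto \chi_\kapb$ is injective on $\N^m$, since $\chi_\kapb = \chi_{\kapb'}$ forces $t_j^{\kappa_j - \kappa_j'} = 1$ for all $t_j \in \T$ and all $j$.

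Now I would group the monomials by weight. By the definition of $\cP_\kapb(\C^n)$ as the span of the $z^\alpha$ with $|\alpha_{(j)}| = \kappa_j$ for all $j$, the decomposition of Proposition~\ref{prop:isotypicTn} rearranges as
\[
    \cA^2_\lambda(\B^n) = \bigoplus_{\alpha \in \N^n} \C z^\alpha
        = \bigoplus_{\kapb \in \N^m} \Bigl( \bigoplus_{\alpha:\ |\alpha_{(j)}| = \kappa_j\ \forall j} \C z^\alpha \Bigr)
        = \bigoplus_{\kapb \in \N^m} \cP_\kapb(\C^n).
\]
It remains to identify each summand as an isotypic component. Since $\T^m$ is abelian, its irreducible modules are exactly the one-dimensional ones afforded by the characters $\chi_\kapb$, $\kapb \in \N^m$. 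All of $\cP_\kapb(\C^n)$ transforms under $\chi_\kapb$ (it is spanned by weight vectors of that weight), while by the injectivity just noted no nonzero vector of $\cP_{\kapb'}(\C^n)$ with $\kapb' \neq \kapb$ transforms under $\chi_\kapb$; hence the isotypic component of $\pi_\lambda|_{\T^m}$ associated to $\chi_\kapb$ is precisely $\cP_\kapb(\C^n)$, and the displayed decomposition is the isotypic decomposition claimed.

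There is no genuine obstacle here; the one point worth flagging — and the reason this statement, unlike Propositions~\ref{prop:isotypicTn}, \ref{prop:isotypicU(n)} and \ref{prop:isotypicU(kb)}, does not assert multiplicity-freeness — is that each component $\cP_\kapb(\C^n)$ is a direct sum of $\dim \cP_\kapb(\C^n)$ copies of the single character $\chi_\kapb$. This higher multiplicity is exactly the source of the block structure studied in the remainder of the paper.
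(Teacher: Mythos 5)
Your proof is correct and follows essentially the same route as the paper's: establish the Hilbert direct sum $\cA^2_\lambda(\B^n)=\bigoplus_{\kapb}\cP_\kapb(\C^n)$, show $\T^m$ acts on each $\cP_\kapb(\C^n)$ by the character determined by $-\kapb$, and use the injectivity of $\kapb\mapsto\chi_\kapb$ to identify each summand as an isotypic component. The only cosmetic difference is that you derive the direct sum by regrouping the monomial basis from Proposition~\ref{prop:isotypicTn} and compute the character on monomials directly, whereas the paper cites Proposition~\ref{prop:isotypicU(kb)} for the sum and Lemma~\ref{lem:cPkapb} for the character; the substance is identical.
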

\begin{proof}
    The Hilbert direct sum was already obtained in Proposition~\ref{prop:isotypicU(kb)}.

    On the other hand, for a given $\kapb \in \N^m$, if $p \in \cP_\kapb(\C^n)$, then we have
    \[
        t\cdot p(z)
        = t_1^{-\kappa_1} \dots t_m^{-\kappa_m} p(z)
    \]
    for every $t \in \T^m$. This follows easily from Lemma~\ref{lem:cPkapb}. Hence, $\cP_\kapb(\C^n)$ is the space of polynomials that transform under the $\T^m$-action according to the character defined by $-\kapb$. We conclude that $\cP_\kapb(\C^n)$ is the isotypic component associated to the character defined by $-\kapb$ and so the result follows.
\end{proof}

\begin{remark}\label{rmk:isotypicTm}
    We observe that the isotypic decomposition from Proposition~\ref{prop:isotypicTm} is multiplicity-free if and only if $\dim \cP_\kapb(\C^n) = 1$ for every $\kapb \in \N^m$, because $\T^m$ is an Abelian group. And this holds if and only if $m = n$, which occurs when the partition $\kb$ of $n$ is precisely $(1, \dots, 1) \in \Z_+^n$. It is only in this case that we obtain a multiplicity-free isotypic decomposition for the restriction~$\pi_\lambda|_{\T^m}$.
\end{remark}

We recall that Schur's Lemma implies that any intertwining operator preserves the isotypic components and that, in the multiplicity-free case, it acts on them by a constant multiple of the identity. Hence, as a consequence of Proposition~\ref{prop:isotypicU(kb)} we obtain the next result.

\begin{proposition}\label{prop:intertwiningU(kb)}
    Let $\lambda > -1$ and $\kb \in \Z_+^m$ be given. Then, the map defined by
    \begin{align*}
        \ell^\infty(\N^m) &\rightarrow \End_{\U(\kb)}
        (\cA^2_\lambda(\B^n)) \\
        (c_\kapb)_{\kapb \in \N^m} &\mapsto
            \bigoplus_{\kapb \in \N^m}
                    c_\kapb I_\kapb
    \end{align*}
    is an isomorphism of $C^*$-algebras, where $I_\kapb$ is the identity map over~$\cP_\kapb(\C^n)$.
\end{proposition}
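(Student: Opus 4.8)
The plan is to verify that the stated map is a well-defined $*$-homomorphism, that it is injective, and that it is surjective, with the bulk of the content coming from Schur's Lemma applied to the multiplicity-free isotypic decomposition of Proposition~\ref{prop:isotypicU(kb)}. First I would recall the general structural fact: if $\pi_\lambda|_{\U(\kb)}$ decomposes as the Hilbert direct sum $\bigoplus_{\kapb \in \N^m} \cP_\kapb(\C^n)$ with pairwise inequivalent irreducible summands, then any $T \in \End_{\U(\kb)}(\cA^2_\lambda(\B^n))$ must preserve each $\cP_\kapb(\C^n)$ (an intertwining operator maps an isotypic component into itself, and here each component is a single irreducible), and moreover the restriction $T|_{\cP_\kapb(\C^n)}$ is again an intertwining operator of the irreducible $\U(\kb)$-module $\cP_\kapb(\C^n)$ with itself, hence by Schur's Lemma equals $c_\kapb I_\kapb$ for a unique scalar $c_\kapb \in \C$. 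Boundedness of $T$ forces $\sup_\kapb |c_\kapb| = \|T\| < \infty$, so $(c_\kapb)_\kapb \in \ell^\infty(\N^m)$; this shows the assignment $T \mapsto (c_\kapb)_\kapb$ is a well-defined inverse to the map in the statement, giving surjectivity and injectivity of the map in the statement simultaneously.

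Next I would check that the map $(c_\kapb)_\kapb \mapsto \bigoplus_\kapb c_\kapb I_\kapb$ indeed lands in $\End_{\U(\kb)}(\cA^2_\lambda(\B^n))$: such an operator is bounded with norm $\sup_\kapb |c_\kapb|$ because the decomposition is orthogonal, and it commutes with every $\pi_\lambda(A)$ for $A \in \U(\kb)$ since $\pi_\lambda(A)$ preserves each $\cP_\kapb(\C^n)$ (by Lemma~\ref{lem:cPkapb}) and on that summand $\bigoplus c_\kapb I_\kapb$ is just a scalar, which trivially commutes with the restriction of $\pi_\lambda(A)$. Then I would confirm the algebraic structure: the map is visibly $\C$-linear, it is multiplicative because $(\bigoplus c_\kapb I_\kapb)(\bigoplus d_\kapb I_\kapb) = \bigoplus c_\kapb d_\kapb I_\kapb$ block by block, it sends the constant sequence $(1)_\kapb$ to the identity operator, and it is $*$-preserving because the adjoint of $\bigoplus c_\kapb I_\kapb$ with respect to the Bergman inner product is $\bigoplus \overline{c_\kapb} I_\kapb$ (again using orthogonality of the summands). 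A $*$-homomorphism between $C^*$-algebras that is bijective is automatically isometric, so it is an isomorphism of $C^*$-algebras; alternatively one notes directly that $\|\bigoplus c_\kapb I_\kapb\| = \sup_\kapb|c_\kapb| = \|(c_\kapb)_\kapb\|_\infty$.

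The one point requiring a little care — and the closest thing to an obstacle — is making the "orthogonal direct sum" bookkeeping rigorous: one must note that since $\{\cP_\kapb(\C^n)\}_{\kapb}$ are mutually orthogonal closed subspaces whose algebraic sum is dense, a bounded operator acting on each summand as a scalar $c_\kapb$ with $\sup_\kapb|c_\kapb| < \infty$ extends uniquely to a bounded operator on the whole Hilbert space, and conversely every bounded $\U(\kb)$-intertwiner arises this way. This is a routine Hilbert-space argument, but it is where the hypothesis $(c_\kapb)_\kapb \in \ell^\infty(\N^m)$ is actually used and where one should be explicit. Everything else is a direct application of Schur's Lemma together with the multiplicity-freeness established in Proposition~\ref{prop:isotypicU(kb)}; no new ideas beyond the standard representation-theoretic dictionary are needed.
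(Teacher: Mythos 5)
Your proof is correct and follows essentially the same route as the paper, which simply observes that the result is an immediate consequence of Schur's Lemma applied to the multiplicity-free isotypic decomposition of Proposition~\ref{prop:isotypicU(kb)}. You have merely supplied the routine details (bounded extension on the orthogonal direct sum, the $*$-homomorphism axioms, and isometry) that the paper leaves implicit.
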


As noted in Proposition~\ref{prop:isotypicTm}, the isotypic decomposition for the restriction $\pi_\lambda|_{\T^m}$ is, in general, not multiplicity-free. In this case we obtain the following result.

\begin{proposition}\label{prop:intertwiningTm}
    Let $\lambda > -1$ and $\kb \in \Z_+^m$ be given. Let us consider the corresponding subgroup $\T^m$ of $\U(n)$. Then, every $T \in \End_{\T^m}(\cA^2_\lambda(\B^n))$ satisfies
    \[
        T(\cP_\kapb(\C^n)) \subset \cP_\kapb(\C^n)
    \]
    for every $\kapb \in \N^m$. Furthermore, the map defined by
    \begin{align*}
        \End_{\T^m}(\cA^2_\lambda(\B^n))
            &\rightarrow
            \bigoplus_{\kapb \in \N^m}
            \End(\cP_\kapb(\C^n)) \\
        T &\mapsto \bigoplus_{\kapb \in \N^m}
                    T|_{\cP_\kapb(\C^n)}
    \end{align*}
    is an isomorphism of $C^*$-algebras.
\end{proposition}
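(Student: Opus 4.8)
The plan is to prove the two assertions in sequence: first that every $T \in \End_{\T^m}(\cA^2_\lambda(\B^n))$ preserves each $\cP_\kapb(\C^n)$, and then that the restriction map $T \mapsto \bigoplus_\kapb T|_{\cP_\kapb(\C^n)}$ is a $C^*$-algebra isomorphism onto $\bigoplus_\kapb \End(\cP_\kapb(\C^n))$. For the first part I would invoke Proposition~\ref{prop:isotypicTm}, which identifies $\cP_\kapb(\C^n)$ as the full isotypic component of $\pi_\lambda|_{\T^m}$ attached to the character $-\kapb$ of $\T^m$: a standard consequence of Schur's Lemma is that any intertwining operator carries an isotypic component into itself (an intertwiner cannot send a vector transforming under one character to a nonzero vector transforming under a different one, since for distinct characters $\chi \neq \chi'$ the relation $T\pi_\lambda(t) = \pi_\lambda(t)T$ forces $\chi(t)Tv = \chi'(t)Tv$ for all $t$, hence $Tv = 0$). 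Thus $T(\cP_\kapb(\C^n)) \subset \cP_\kapb(\C^n)$ for every $\kapb$, which is exactly the block-diagonal structure claimed.

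Given this, the restriction map is well-defined as a map $\End_{\T^m}(\cA^2_\lambda(\B^n)) \to \prod_{\kapb \in \N^m} \End(\cP_\kapb(\C^n))$, and I would first check it lands in the $C_0$-direct sum $\bigoplus_\kapb \End(\cP_\kapb(\C^n))$ — this is the point one must be slightly careful about, since the codomain is the direct sum of operator algebras, not the full product. Actually, one should read $\bigoplus$ here as the bounded direct sum $\{(T_\kapb) : \sup_\kapb \|T_\kapb\| < \infty\}$ of finite-dimensional matrix algebras; with that reading, since $\|T|_{\cP_\kapb(\C^n)}\| \le \|T\|$ for every $\kapb$, the image indeed lies in the bounded direct sum and the map is contractive. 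It is visibly linear, and it is a $\ast$-homomorphism because restriction of both composition and adjoint to an invariant subspace is compatible (the adjoint of $T$ restricted to $\cP_\kapb(\C^n)$ equals the adjoint computed inside $\cP_\kapb(\C^n)$, since $\cP_\kapb(\C^n)$ is also $T^*$-invariant — $T^* \in \End_{\T^m}(\cA^2_\lambda(\B^n))$ because $\End_{\T^m}$ is a $C^*$-algebra). Injectivity is immediate: if all restrictions vanish then $T$ vanishes on the dense span $\bigoplus_\kapb \cP_\kapb(\C^n)$ of the Bergman space, hence $T = 0$.

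For surjectivity, given any bounded family $(T_\kapb)_{\kapb}$ with $T_\kapb \in \End(\cP_\kapb(\C^n))$, I would define $T$ on the orthogonal direct sum $\bigoplus_\kapb \cP_\kapb(\C^n)$ by $T = \bigoplus_\kapb T_\kapb$; since $\sup_\kapb \|T_\kapb\| < \infty$ this extends to a bounded operator on $\cA^2_\lambda(\B^n)$ of norm exactly $\sup_\kapb \|T_\kapb\|$, and it commutes with $\pi_\lambda(t)$ for every $t \in \T^m$ because on each summand $\cP_\kapb(\C^n)$ the operator $\pi_\lambda(t)$ acts as the scalar $t^{-\kapb}$ (by Lemma~\ref{lem:cPkapb}, as recorded in the proof of Proposition~\ref{prop:isotypicTm}), so it commutes with every $T_\kapb$. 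Hence $T \in \End_{\T^m}(\cA^2_\lambda(\B^n))$ and it maps to $(T_\kapb)_\kapb$. Finally, a bijective $\ast$-homomorphism between $C^*$-algebras is automatically isometric and thus an isomorphism of $C^*$-algebras; the norm computation $\|T\| = \sup_\kapb\|T_\kapb\|$ just performed makes this concrete anyway. I do not expect any serious obstacle here — the one place demanding attention is being precise about which "direct sum" of the $\End(\cP_\kapb(\C^n))$ is meant and verifying that the block-diagonal assembly of a uniformly bounded family yields a bounded operator, which is the routine $\ell^\infty$-bound argument.
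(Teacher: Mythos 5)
Your proof is correct and follows essentially the same route as the paper: invoke Proposition~\ref{prop:isotypicTm} and Schur's Lemma (or the explicit character argument you spell out) to get the block-diagonal preservation, then verify that the restriction map is a $*$-homomorphism onto the bounded direct sum and build the inverse by assembling a uniformly bounded family of blocks. The extra care you take in pinning down that $\bigoplus$ means the $\ell^\infty$-direct sum and in justifying $T^*$-invariance is welcome but not a departure from the paper's argument.
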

\begin{proof}
    If $T \in \End_{\T^m}(\cA^2_\lambda(\B^n))$, then Schur's Lemma and Proposition~\ref{prop:isotypicTm} imply that
    \[
        T(\cP_\kapb(\C^n)) \subset \cP_\kapb(\C^n)
    \]
    for every $\kapb \in \N^m$. In particular, we have
    \[
        T = \bigoplus_{\kapb \in \N^m}
            T|_{\cP_\kapb(\C^n)}.
    \]
    Hence, the map in the statement is an injective and well-defined homomorphism of $C^*$-algebras.

    On the other hand, by definition of the direct sum of $C^*$-algebras, if we choose
    \[
        (T_\kapb)_{\kapb \in \N^m} \in
        \bigoplus_{\kapb \in \N^m}
            \End(\cP_\kapb(\C^n)),
    \]
    then the operator defined by
    \[
        T = \bigoplus_{\kapb \in \N^m} T_\kapb
    \]
    is a bounded operator of $\cA^2_\lambda(\B^n)$. Furthermore, as noted in the proof of Proposition~\ref{prop:isotypicTm}, the $\T^m$-action on $\cP_\kapb(\C^n)$ is given by multiplication by a character. In particular, the operator $T$ intertwines the $\T^m$-action and so it belongs to the algebra $\End_{\T^m}(\cA^2_\lambda(\B^n))$. Since we have $T|_{\cP_\kapb(\C^n)} = T_\kapb$, for every $\kapb \in \N^m$, this proves the surjectivity of the homomorphism of $C^*$-algebras.
\end{proof}

The following is an immediate consequence of Propositions~\ref{prop:intertwiningU(kb)} and \ref{prop:intertwiningTm}. Note that the conclusion occurs in a situation where the group $\T^m$ is precisely the center of $\U(\kb)$. In symbols, we have $\T^m = Z(\U(\kb))$.
\begin{corollary}\label{cor:centerEndTm}
    The center of the $C^*$-algebra $\End_{\T^m}(\cA^2_\lambda(\B^n))$ is the $C^*$-algebra $\End_{\U(\kb)} (\cA^2_\lambda(\B^n))$. In other words, we have
    \[
        \End_{\U(\kb)} (\cA^2_\lambda(\B^n)) =
        Z(\End_{\T^m}(\cA^2_\lambda(\B^n))).
    \]
\end{corollary}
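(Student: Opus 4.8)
The plan is to combine the two structure theorems we already have: Proposition~\ref{prop:intertwiningTm} identifies $\End_{\T^m}(\cA^2_\lambda(\B^n))$ with the $C^*$-algebraic direct sum $\bigoplus_{\kapb \in \N^m} \End(\cP_\kapb(\C^n))$, and Proposition~\ref{prop:intertwiningU(kb)} identifies $\End_{\U(\kb)}(\cA^2_\lambda(\B^n))$ with the subalgebra corresponding to the tuples $(c_\kapb I_\kapb)_{\kapb}$ with $c_\kapb \in \C$. Since taking centers commutes with forming direct sums of $C^*$-algebras, I first reduce to computing, for each fixed $\kapb$, the center of the matrix algebra $\End(\cP_\kapb(\C^n))$.

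For that computation I would invoke Schur's Lemma together with Proposition~\ref{prop:Pkapb-tensorproduct}: since $\cP_\kapb(\C^n)$ is an irreducible $\U(\kb)$-module, the center of $\End(\cP_\kapb(\C^n)) \cong M_{d_\kapb}(\C)$ (with $d_\kapb = \dim \cP_\kapb(\C^n)$) is exactly $\C I_\kapb$. Hence $Z\bigl(\bigoplus_\kapb \End(\cP_\kapb(\C^n))\bigr) = \bigoplus_\kapb \C I_\kapb$. Transporting this back through the isomorphism of Proposition~\ref{prop:intertwiningTm} and matching it against the image described in Proposition~\ref{prop:intertwiningU(kb)} gives $Z(\End_{\T^m}(\cA^2_\lambda(\B^n))) = \End_{\U(\kb)}(\cA^2_\lambda(\B^n))$ as $C^*$-algebras inside $B(\cA^2_\lambda(\B^n))$. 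I should also note the trivial containment $\End_{\U(\kb)} \subseteq \End_{\T^m}$, which follows from $\T^m \subset \U(\kb)$, so that the identification of subalgebras of $B(\cA^2_\lambda(\B^n))$ is literal and not merely up to isomorphism.

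One technical point I want to handle carefully is the passage to centers in the infinite direct sum: an element $(T_\kapb)_\kapb$ of the $C^*$-direct sum is a bounded family, and it is central if and only if each $T_\kapb$ is central in $\End(\cP_\kapb(\C^n))$, i.e.\ $T_\kapb = c_\kapb I_\kapb$; boundedness of the family then just says $(c_\kapb)_\kapb \in \ell^\infty(\N^m)$, which is precisely the domain of the isomorphism in Proposition~\ref{prop:intertwiningU(kb)}. So the constants produced are automatically the ones parametrizing $\End_{\U(\kb)}(\cA^2_\lambda(\B^n))$, and the two descriptions match on the nose. I do not foresee a real obstacle here; the only place requiring genuine care is making sure the two ``diagonal'' pictures — the block-diagonal one for $\T^m$ and the scalar-on-each-block one for $\U(\kb)$ — are compatible, i.e.\ that the restriction $T \mapsto \bigoplus_\kapb T|_{\cP_\kapb(\C^n)}$ sends $\End_{\U(\kb)}$ onto exactly the scalar tuples, which is immediate from comparing the statements of Propositions~\ref{prop:intertwiningU(kb)} and~\ref{prop:intertwiningTm}.

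\begin{proof}
    By Proposition~\ref{prop:intertwiningTm}, the restriction map
    \[
        \Phi : \End_{\T^m}(\cA^2_\lambda(\B^n)) \longrightarrow
        \bigoplus_{\kapb \in \N^m} \End(\cP_\kapb(\C^n)),
        \qquad
        T \longmapsto \bigoplus_{\kapb \in \N^m} T|_{\cP_\kapb(\C^n)},
    \]
    is an isomorphism of $C^*$-algebras. Hence $Z(\End_{\T^m}(\cA^2_\lambda(\B^n)))$ is carried by $\Phi$ onto the center of the $C^*$-direct sum, which is $\bigoplus_{\kapb \in \N^m} Z(\End(\cP_\kapb(\C^n)))$. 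Indeed, a bounded family $(T_\kapb)_{\kapb \in \N^m}$ commutes with every bounded family $(S_\kapb)_{\kapb \in \N^m}$ if and only if $T_\kapb S_\kapb = S_\kapb T_\kapb$ for every $\kapb$, i.e.\ if and only if each $T_\kapb$ lies in $Z(\End(\cP_\kapb(\C^n)))$.

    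Fix $\kapb \in \N^m$. By Proposition~\ref{prop:Pkapb-tensorproduct}, $\cP_\kapb(\C^n)$ is an irreducible $\U(\kb)$-module, so Schur's Lemma shows that the commutant of the $\U(\kb)$-action inside $\End(\cP_\kapb(\C^n))$ is $\C I_\kapb$. On the other hand $\End(\cP_\kapb(\C^n))$ is a full matrix algebra over $\C$, and its center is the set of scalar operators $\C I_\kapb$. Therefore $Z(\End(\cP_\kapb(\C^n))) = \C I_\kapb$, and consequently
    \[
        \Phi\bigl(Z(\End_{\T^m}(\cA^2_\lambda(\B^n)))\bigr)
            = \bigoplus_{\kapb \in \N^m} \C I_\kapb
            = \Bigl\{ (c_\kapb I_\kapb)_{\kapb \in \N^m}
                \mid (c_\kapb)_{\kapb \in \N^m} \in \ell^\infty(\N^m) \Bigr\}.
    \]

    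By Proposition~\ref{prop:intertwiningU(kb)}, the right-hand side is precisely $\Phi(\End_{\U(\kb)}(\cA^2_\lambda(\B^n)))$; note that $\End_{\U(\kb)}(\cA^2_\lambda(\B^n)) \subset \End_{\T^m}(\cA^2_\lambda(\B^n))$ since $\T^m \subset \U(\kb)$, so it makes sense to apply $\Phi$ to it, and the description of $\Phi$ in Proposition~\ref{prop:intertwiningTm} is the one used to identify the image with the scalar tuples in Proposition~\ref{prop:intertwiningU(kb)}. Since $\Phi$ is injective, we conclude that
    \[
        Z(\End_{\T^m}(\cA^2_\lambda(\B^n)))
            = \End_{\U(\kb)}(\cA^2_\lambda(\B^n)),
    \]
    as $C^*$-subalgebras of $B(\cA^2_\lambda(\B^n))$.
\end{proof}
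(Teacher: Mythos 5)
Your argument is correct and is precisely the unpacking of the paper's one-line justification that the corollary is ``an immediate consequence of Propositions~\ref{prop:intertwiningU(kb)} and~\ref{prop:intertwiningTm}'': you transport the center computation through the isomorphism of Proposition~\ref{prop:intertwiningTm} and identify the result with the image description in Proposition~\ref{prop:intertwiningU(kb)}. The brief appeal to Schur's Lemma and Proposition~\ref{prop:Pkapb-tensorproduct} is redundant — the center of $\End(\cP_\kapb(\C^n))$ being $\C I_\kapb$ needs only that it is a full matrix algebra — but this does not affect correctness.
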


\section{The $C^*$-algebra $\cT^{(\lambda)}(L^\infty(\B^n)^{\T^m})$ and quasi-radial symbols}\label{sec:Tm_and_quasi-radial}
As in the previous section, we will consider a fixed partition $\kb \in \Z_+^m$ of $n$. This partition $\kb$ also fixes the groups $\U(\kb)$ and $\T^m$ defined as before.

We recall (see \cite{VasQuasiRadial}) that a symbol $a \in L^\infty(\B^n)$ is called $\kb$-quasi-radial if there is a measurable essentially bounded function $f$ such that $a(z) = f(|z_{(1)}|, \dots, |z_{(m)}|)$ for almost every $z \in \B^n$. In particular, it is clear that the space of $\kb$-quasi-radial essentially bounded symbols is precisely $L^\infty(\B^n)^{\U(\kb)}$. It was proved in \cite{VasQuasiRadial} that the $C^*$-algebra $\cT^{(\lambda)}(L^\infty(\B^n)^{\U(\kb)})$, generated by Toeplitz operators with $\kb$-quasi-radial symbols, is commutative. In this section, we will establish in this section some relations between the $C^*$-algebras $\cT^{(\lambda)}(L^\infty(\B^n)^{\U(\kb)})$ and $\cT^{(\lambda)}(L^\infty(\B^n)^{\T^m})$.

From Corollary~\ref{cor:HinvariantToeplitz} it follows that $\cT^{(\lambda)}(L^\infty(\B^n)^{\T^m}) \subset \End_{\T^m}(\cA^2_\lambda(\B^n))$, and so we can apply the map from Proposition~\ref{prop:intertwiningTm} followed by a projection to obtain the homomorphism of $C^*$-algebras given by
\begin{align*}
    \pi_\kapb : \cT^{(\lambda)}(L^\infty(\B^n)^{\T^m}) &\rightarrow \End(\cP_\kapb(\C^n))  \\
    T &\mapsto T|_{\cP_\kapb(\C^n)}
\end{align*}
for every $\kapb \in \N^m$.

\begin{corollary}\label{cor:pikapb_irreducible}
    For every $lambda > -1$, the homomorphism
    \[
        \pi_\kapb : \cT^{(\lambda)}(L^\infty(\B^n)^{\T^m}) \rightarrow \End(\cP_\kapb(\C^n))
    \]
    of $C^*$-algebras defines an irreducible representation of $\cT^{(\lambda)}(L^\infty(\B^n)^{\T^m})$ for every $\kapb \in \N^m$. In particular, if $m < n$, i.e.~$\kb \not= (1,\dots, 1) \in \N^n$, then $\cT^{(\lambda)}(L^\infty(\B^n)^{\T^m})$ is not commutative.
\end{corollary}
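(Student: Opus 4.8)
The plan is to show that the image $\pi_\kapb\big(\cT^{(\lambda)}(L^\infty(\B^n)^{\T^m})\big)$ is all of $\End(\cP_\kapb(\C^n))$; this immediately gives irreducibility (a full matrix algebra acts irreducibly on its defining module), and then non-commutativity follows by a dimension count. First I would invoke Proposition~\ref{prop:HToeplitz_density} with $H = \T^m$: the $C^*$-algebra $\cT^{(\lambda)}(L^\infty(\B^n)^{\T^m})$ is strongly dense in $\End_{\T^m}(\cA^2_\lambda(\B^n))$. Next I would combine this with the isomorphism of Proposition~\ref{prop:intertwiningTm}, under which $\End_{\T^m}(\cA^2_\lambda(\B^n)) \cong \bigoplus_{\kapb \in \N^m} \End(\cP_\kapb(\C^n))$, and note that for a fixed $\kapb$ the restriction-to-$\cP_\kapb(\C^n)$ map — which is exactly $\pi_\kapb$ extended to all of $\End_{\T^m}(\cA^2_\lambda(\B^n))$ — is surjective onto the single finite-dimensional block $\End(\cP_\kapb(\C^n))$, simply because it is a coordinate projection of a direct sum.

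The one genuinely delicate point is to pass from "the whole von Neumann algebra $\End_{\T^m}(\cA^2_\lambda(\B^n))$ surjects onto the block $\End(\cP_\kapb(\C^n))$" to "the dense subalgebra $\cT^{(\lambda)}(L^\infty(\B^n)^{\T^m})$ already surjects onto that block." Here the key observation is that $\End(\cP_\kapb(\C^n))$ is finite dimensional, the block projection $P_\kapb \colon \End_{\T^m}(\cA^2_\lambda(\B^n)) \to \End(\cP_\kapb(\C^n))$ is continuous for the strong operator topology when restricted to the fixed finite-rank corner $\cP_\kapb(\C^n)$, and a $*$-homomorphism between $C^*$-algebras has norm-closed image. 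Concretely: $\pi_\kapb$ is a $*$-homomorphism of $C^*$-algebras, hence its image is a $C^*$-subalgebra of $\End(\cP_\kapb(\C^n))$, in particular norm-closed. On the other hand, the SOT-density of $\cT^{(\lambda)}(L^\infty(\B^n)^{\T^m})$ in $\End_{\T^m}(\cA^2_\lambda(\B^n))$ forces its image under the block-projection to be SOT-dense in the finite-dimensional algebra $\End(\cP_\kapb(\C^n))$ (restricting to the finite-rank subspace $\cP_\kapb(\C^n)$ makes SOT-convergence the same as norm convergence of matrices). A norm-closed, SOT-dense (equivalently norm-dense, in finite dimensions) subalgebra of $\End(\cP_\kapb(\C^n))$ must be all of it. Therefore $\pi_\kapb$ is onto $\End(\cP_\kapb(\C^n))$, and in particular $\pi_\kapb$ is an irreducible representation.

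Finally, for the non-commutativity assertion: by Remark~\ref{rmk:isotypicTm}, when $m < n$ there exists $\kapb \in \N^m$ with $\dim \cP_\kapb(\C^n) \geq 2$ — for instance, if $k_m \geq 2$ one may take $\kapb$ with $\kappa_m = 1$ and all other entries $0$, giving $\dim \cP_\kapb(\C^n) = k_m \geq 2$. For such a $\kapb$ the algebra $\End(\cP_\kapb(\C^n)) \cong M_d(\C)$ with $d \geq 2$ is non-commutative, and since $\pi_\kapb$ maps $\cT^{(\lambda)}(L^\infty(\B^n)^{\T^m})$ onto it, the source algebra cannot be commutative either. I expect the main obstacle to be precisely the finite-dimensional SOT-versus-norm bookkeeping in the middle paragraph; everything else is a direct assembly of Propositions~\ref{prop:HToeplitz_density} and \ref{prop:intertwiningTm} together with Remark~\ref{rmk:isotypicTm}.
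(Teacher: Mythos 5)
Your proof is correct and matches the paper's argument almost step for step: SOT-density from Proposition~\ref{prop:HToeplitz_density}, SOT-continuity of the block projection onto $\End(\cP_\kapb(\C^n))$ via Proposition~\ref{prop:intertwiningTm}, finite-dimensionality to upgrade density to surjectivity, and a dimension count for non-commutativity. The only cosmetic difference is that you appeal to the norm-closedness of $*$-homomorphic images of $C^*$-algebras, whereas the paper silently uses the more elementary fact that a dense linear subspace of a finite-dimensional space is the whole space.
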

\begin{proof}
    By Proposition~\ref{prop:HToeplitz_density},
    $\cT^{(\lambda)}(L^\infty(\B^n)^{\T^m})$ is dense in $\End_{\T^m}(\cA^2_\lambda(\B^n))$ with respect to the strong operator topology. Using Proposition~\ref{prop:intertwiningTm}, we note that for every $\kapb \in \N^m$, the projection $\End_{\T^m}(\cA^2_\lambda(\B^n)) \rightarrow \End(\cP_\kapb(\C^n))$ given by $T \mapsto T|_{\cP_\kapb(\C^n)}$ is surjective and also continuous when both the domain and the target are endowed with the strong operator topology. It follows that, for every $\kapb \in \N^m$, the algebra $\pi_\kapb(\cT^{(\lambda)}(L^\infty(\B^n)^{\T^m}))$ is dense in $\End(\cP_\kapb(\C^n))$, in the strong operator topology, and so that $\pi_\kapb$ is surjective.

    If $\kb \not= (1, \dots, 1) \in \Z_+^n$, then there is some $j_0$ such that $k_{j_0} > 1$. It follows that for every $\kapb \in \N^m$ such that $\kappa_{j_0} \geq 1$, the space $\cP_{\kapb}(\C^n)$ has dimension at least $2$. Hence, for such $\kapb$, the $C^*$-algebra $\End(\cP_{\kapb}(\C^n))$ is not commutative and so $\cT^{(\lambda)}(L^\infty(\B^n)^{\T^m})$ is not commutative either by the first part.
\end{proof}

\begin{remark}\label{rmk:multfree_vs_commToeplitz}
    An alternative proof to the second claim of Corollary~\ref{cor:pikapb_irreducible} can be obtained from Propositions~\ref{prop:HToeplitz_density} and \ref{prop:isotypicTm}. We refer to Theorem~6.4 from \cite{DOQJFA} for further details.
\end{remark}

We now proceed to obtain a relationship between the Toeplitz operators belonging to the commutative $C^*$-algebra $\cT^{(\lambda)}(L^\infty(\B^n)^{\U(\kb)})$ and the (in general) non-commutative $C^*$-algebra $\cT^{(\lambda)}(L^\infty(\B^n)^{\T^m})$. We start by establishing some notation for the elements on both $C^*$-algebras. In the rest of this work, and following the notation of Proposition~\ref{prop:intertwiningU(kb)}, the identity map on $\cP_\kapb(\C^n)$ will be denoted by $I_\kapb$.

As a consequence of Proposition~\ref{prop:intertwiningU(kb)} and Corollary~\ref{cor:HinvariantToeplitz}, for every $\lambda > -1$ and for every $T \in \cT^{(\lambda)}(L^\infty(\B^n)^{\U(\kb)})$ there is a sequence $\gamma_{\lambda, T} \in \ell^\infty(\N^m)$ such that
\begin{equation}\label{eq:gamma_lambdaT}
    T = \bigoplus_{\kapb \in \N^m}
        \gamma_{\lambda, T}(\kapb) I_{\kapb},
        \quad
        (T \in \cT^{(\lambda)}(L^\infty(\B^n)^{\U(\kb)})),
\end{equation}
with respect to the isotypic decomposition from Proposition~\ref{prop:isotypicU(kb)}. In the special case where $T = T_a^{(\lambda)}$, for a symbol $a \in L^\infty(\B^n)^{\U(\kb)}$, we will write $\gamma_{\lambda, a} = \gamma_{\lambda, T_a^{(\lambda)}}$ so that we have
\begin{equation}\label{eq:gamma_lambdaa}
    T_a^{(\lambda)}
        = \bigoplus_{\kapb \in \N^m}
            \gamma_{\lambda, a}(\kapb) I_\kapb,
            \quad
            (a \in L^\infty(\B^n)^{\U(\kb)}).
\end{equation}
Similarly, by Proposition~\ref{prop:intertwiningTm} and Corollary~\ref{cor:HinvariantToeplitz}, for every $\lambda > -1$ and for every $T  \in \cT^{(\lambda)}(L^\infty(\B^n)^{\T^m})$ we have the decomposition
\begin{equation}\label{eq:block_lambdaT}
    T = \bigoplus_{\kapb \in \N^m}
            T|_{\cP_\kapb(\C^n)},
            \quad
            (T \in \cT^{(\lambda)}(L^\infty(\B^n)^{\T^m})),
\end{equation}
where $T|_{\cP_\kapb(\C^n)}$ can assume any value in $\End(\cP_\kapb(\C^n))$ (see Corollary~\ref{cor:pikapb_irreducible}). Correspondingly, for a symbol $a \in L^\infty(\B^n)^{\T^m}$ we now have
\begin{equation}\label{eq:block_lambdaa}
    T_a^{(\lambda)} = \bigoplus_{\kapb \in \N^m}
            T_a^{(\lambda)} |_{\cP_\kapb(\C^n)},
            \quad
            (a \in L^\infty(\B^n)^{\T^m}).
\end{equation}

Note that equations~\eqref{eq:gamma_lambdaT} and \eqref{eq:gamma_lambdaa} yield diagonal decompositions, corresponding to the commutativity of $\cT^{(\lambda)}(L^\infty(\B^n)^{\U(\kb)})$, while equations~\eqref{eq:block_lambdaT} and \eqref{eq:block_lambdaa} yield block diagonal decompositions, for the (in general) non-commutative $C^*$-algebra $\cT^{(\lambda)}(L^\infty(\B^n)^{\T^m})$. Hence, the next result provides us a way to relate the behavior of the block diagonal case, for the Toeplitz operators with $\T^m$-invariant symbols, in terms of the diagonal case, of the Toeplitz operators with $\kb$-quasi-radial symbols.

\begin{theorem}\label{thm:traceToeplitzTm}
    Let $\kb \in \Z_+^m$ be a fixed partition of $n$, and let $\T^m$ and $\U(\kb)$ be the corresponding subgroups of $\U(n)$ defined as before.
    Let $a \in L^\infty(\B^n)^{\T^m}$ be given and consider the symbol $\widehat{a} \in L^\infty(\B^n)^{\U(\kb)}$ defined by \eqref{eq:widehata} for $H = \U(\kb)$. Then, for every $\lambda > -1$ we have
    \[
        \tr(T_a^{(\lambda)}|_{\cP_\kapb(\C^n)}) =
            \tr(T_{\widehat{a}}^{(\lambda)} |_{\cP_\kapb(\C^n)}),
    \]
    for every $\kapb \in \N^m$. In particular, in the notation of the decompositions provided by \eqref{eq:gamma_lambdaa} and \eqref{eq:block_lambdaa} we have
    \[
        \gamma_{\lambda, \widehat{a}}(\kapb)
            = \frac{\tr(T_a^{(\lambda)} |_{\cP_\kapb(\C^n)})}{\dim \cP_\kapb(\C^n)},
    \]
    for every $\kapb \in \N^m$.
\end{theorem}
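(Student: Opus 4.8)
The plan is to exploit the averaging operator $T \mapsto \widehat{T}$ from \eqref{eq:widehatT} for the group $H = \U(\kb)$, together with Lemma~\ref{lem:widehatTa} which identifies $\widehat{T_a^{(\lambda)}} = T_{\widehat{a}}^{(\lambda)}$. The key observation is that both $T_a^{(\lambda)}$ (by Corollary~\ref{cor:HinvariantToeplitz}, since $a$ is $\T^m$-invariant) and $T_{\widehat{a}}^{(\lambda)}$ (since $\widehat{a}$ is $\U(\kb)$-invariant, hence a fortiori $\T^m$-invariant) lie in $\End_{\T^m}(\cA^2_\lambda(\B^n))$, so by Proposition~\ref{prop:intertwiningTm} they both preserve each finite-dimensional isotypic component $\cP_\kapb(\C^n)$. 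Thus it makes sense to restrict the averaging identity to a single block: I would first verify that averaging commutes with restriction to $\cP_\kapb(\C^n)$, i.e.
\[
    \widehat{T}\big|_{\cP_\kapb(\C^n)} = \widehat{\,T|_{\cP_\kapb(\C^n)}\,},
\]
where on the right the averaging is the analogous construction over the finite-dimensional $\U(\kb)$-module $\cP_\kapb(\C^n)$. This holds because $\pi_\lambda(A)$ preserves $\cP_\kapb(\C^n)$ for $A \in \U(\kb)$ (Lemma~\ref{lem:cPkapb}), so every integrand $\pi_\lambda(A) T \pi_\lambda(A)^{-1}$ in \eqref{eq:widehatT} has the same block-diagonal structure, and the integral defining $\widehat{T}$ can be computed blockwise.

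Next I would take the trace of both sides on the block $\cP_\kapb(\C^n)$. Writing $S = T_a^{(\lambda)}|_{\cP_\kapb(\C^n)} \in \End(\cP_\kapb(\C^n))$, the averaged operator is $\widehat{S} = \int_{\U(\kb)} \rho(A) S \rho(A)^{-1}\,\dif\mu(A)$, where $\rho$ denotes the (restricted, finite-dimensional) representation of $\U(\kb)$ on $\cP_\kapb(\C^n)$. Since trace is linear, continuous, and conjugation-invariant — $\tr(\rho(A) S \rho(A)^{-1}) = \tr(S)$ for every $A$ — and $\mu$ is a probability measure, we get $\tr(\widehat{S}) = \int_{\U(\kb)} \tr(S)\,\dif\mu(A) = \tr(S)$. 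Combining this with Lemma~\ref{lem:widehatTa} and the blockwise compatibility established above yields
\[
    \tr\big(T_{\widehat{a}}^{(\lambda)}|_{\cP_\kapb(\C^n)}\big)
        = \tr\big(\widehat{T_a^{(\lambda)}}|_{\cP_\kapb(\C^n)}\big)
        = \tr\big(\widehat{\,T_a^{(\lambda)}|_{\cP_\kapb(\C^n)}\,}\big)
        = \tr\big(T_a^{(\lambda)}|_{\cP_\kapb(\C^n)}\big),
\]
which is the first claimed identity.

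For the second statement, recall from Proposition~\ref{prop:isotypicU(kb)} that $\cP_\kapb(\C^n)$ is an irreducible $\U(\kb)$-module and that $T_{\widehat{a}}^{(\lambda)} \in \End_{\U(\kb)}(\cA^2_\lambda(\B^n))$; hence by Schur's Lemma (equivalently, by \eqref{eq:gamma_lambdaa}) we have $T_{\widehat{a}}^{(\lambda)}|_{\cP_\kapb(\C^n)} = \gamma_{\lambda,\widehat{a}}(\kapb)\, I_\kapb$. Taking traces gives $\tr(T_{\widehat{a}}^{(\lambda)}|_{\cP_\kapb(\C^n)}) = \gamma_{\lambda,\widehat{a}}(\kapb)\,\dim\cP_\kapb(\C^n)$, and substituting the identity just proved and solving for $\gamma_{\lambda,\widehat{a}}(\kapb)$ yields the formula. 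The only genuinely delicate point — and the step I would write out most carefully — is the blockwise compatibility of the averaging operator with restriction to $\cP_\kapb(\C^n)$: one must justify interchanging the (weak) operator integral defining $\widehat{T}$ with projection onto the block, which is clean because the block is a finite-dimensional reducing subspace for all the operators $\pi_\lambda(A)$, $A \in \U(\kb)$, involved in the integrand. Everything else is routine linearity and the standard conjugation-invariance of the trace.
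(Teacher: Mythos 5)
Your proposal is correct and takes essentially the same route as the paper: both hinge on Lemma~\ref{lem:widehatTa} to rewrite $T_{\widehat{a}}^{(\lambda)}$ as the average $\widehat{T_a^{(\lambda)}}$, then use that $\pi_\lambda(A)$ preserves $\cP_\kapb(\C^n)$ for $A\in\U(\kb)$, the conjugation-invariance of the trace, and the normalization of $\mu$ to conclude. The paper folds your ``blockwise compatibility of averaging'' step directly into a chain of equalities over an orthonormal basis of $\cP_\kapb(\C^n)$ rather than stating it separately, but the content is identical.
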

\begin{proof}
    From the decompositions given by \eqref{eq:gamma_lambdaa} and \eqref{eq:block_lambdaa}, the Toeplitz operators $T_a^{(\lambda)}$ and $T_{\widehat{a}}^{(\lambda)}$ preserve the finite dimensional subspace $\cP_\kapb(\C^n)$, for every $\kapb \in \N^m$. Let us fix $\kapb \in \N^m$ to prove the required identities.

    Choose $(f_j)_j$ any orthonormal basis for $\cP_\kapb(\C^n)$. Then, using Lemma~\ref{lem:widehatTa} as well as \eqref{eq:widehatT} we have
    \begin{align*}
        \tr(T_{\widehat{a}}^{(\lambda)}
            |_{\cP_\kapb(\C^n)})
            &= \tr(\widehat{T_a^{(\lambda)}}
            |_{\cP_\kapb(\C^n)}) =
            \sum_{j=1}^{\dim \cP_\kapb(\C^n)}
            \langle \widehat{T_a^{(\lambda)}} f_j,
                f_j \rangle_\lambda \\
            &= \int_{\U(\kb)}
                \sum_{j=1}^{\dim \cP_\kapb(\C^n)}
                \langle \pi_\lambda(A) T_a^{(\lambda)} |_{\cP_\kapb(\C^n)} \pi_\lambda(A)^{-1} f_j, f_j \rangle_\lambda \dif \mu(A) \\
            &= \int_{\U(\kb)} \tr(\pi_\lambda(A) |_{\cP_\kapb(\C^n)} T_a^{(\lambda)}|_{\cP_\kapb(\C^n)} \pi_\lambda(A)^{-1} |_{\cP_\kapb(\C^n)}) \dif \mu(A) \\
            &= \tr(T_a^{(\lambda)} |_{\cP_\kapb(\C^n)}),
    \end{align*}
    where we have used that $\mu$ is a probability measure. This proves the first identity. The second identity is an immediate consequence of the decompositions given by \eqref{eq:gamma_lambdaa} and \eqref{eq:block_lambdaa}.
\end{proof}

We can use the previous result to obtain a sort of spectral integral formula for the traces of the blocks of Toeplitz operators with $\T^m$-invariant symbols. This is the content of the next result. We will use from now the notation
\[
    \tau(\B^l) = \{r \in \R^l_+ \mid |r| < 1 \}
\]
for every $l \in \Z_+$.

\begin{theorem}\label{thm:traceToeplitzTm_integral}
    Let $\kb \in \Z_+^m$ be a fixed partition of $n$, and let $\T^m$ and $\U(\kb)$ be the corresponding subgroups of $\U(n)$ defined as before. For every $j =1, \dots, m$, choose $u_j \in \C^{k_j}$ unitary vectors. If $a \in L^\infty(\B^n)^{\T^m}$, then for every $\lambda > -1$ we have
    \begin{multline*}
        \tr(T_a^{(\lambda)}|_{\cP_{\kapb}(\C^n)}) =
          \frac{2^m\Gamma(n + \lambda + |\kapb| + 1)}{\Gamma(\lambda + 1) \prod_{j=1}^m \kappa_j !(k_j - 1)!} \times \\
        \times
        \int_{\U(\kb) \times \tau(\B^m)}
            a(r_1 A_1^{-1} u_1, \dots,
                r_m A_m^{-1} u_m)
                (1 - |r|^2)^\lambda  \\
        \times \dif \mu(A)
                \prod_{j=1}^m
                r_j^{2k_j + 2\kappa_j - 1} \dif r_j.
    \end{multline*}
    for every $\kapb \in \N^m$, where $\mu$ denotes the probability Haar measure of $\U(\kb)$.
\end{theorem}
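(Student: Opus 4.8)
The plan is to combine Theorem~\ref{thm:traceToeplitzTm} with an explicit integration of the Toeplitz operator $T_{\widehat{a}}^{(\lambda)}$ against the monomial basis of $\cP_\kapb(\C^n)$. By Theorem~\ref{thm:traceToeplitzTm}, we have $\tr(T_a^{(\lambda)}|_{\cP_\kapb(\C^n)}) = \gamma_{\lambda,\widehat a}(\kapb)\dim\cP_\kapb(\C^n)$, so it suffices to compute $\gamma_{\lambda,\widehat a}(\kapb)$, the scalar by which $T_{\widehat a}^{(\lambda)}$ acts on the irreducible module $\cP_\kapb(\C^n)$. First I would pick a single convenient monomial $z^\alpha$ with $|\alpha_{(j)}|=\kappa_j$ for all $j$ --- for instance $z^\alpha = \prod_{j=1}^m (z_{(j)}\cdot u_j)$-type choice, or more simply the monomial supported on the first coordinate of each block --- and compute $\langle T_{\widehat a}^{(\lambda)} z^\alpha, z^\alpha\rangle_\lambda / \|z^\alpha\|_\lambda^2$, which equals $\gamma_{\lambda,\widehat a}(\kapb)$. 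Since $\widehat a$ is $\U(\kb)$-invariant, it depends only on $(|z_{(1)}|,\dots,|z_{(m)}|)$, and $\langle T_{\widehat a}^{(\lambda)} z^\alpha, z^\alpha\rangle_\lambda = \int_{\B^n} \widehat a(z)|z^\alpha|^2 \dif v_\lambda(z)$.

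The core computation is then to evaluate this integral in polar-type coordinates adapted to the block decomposition $\C^n = \C^{k_1}\times\dots\times\C^{k_m}$. The plan is to write $z_{(j)} = r_j \zeta_j$ with $r_j = |z_{(j)}| \ge 0$ and $\zeta_j$ in the unit sphere $\mbS^{2k_j-1}\subset\C^{k_j}$, so that $\dif v(z)$ factors as a product of $r_j^{2k_j-1}\dif r_j$ and the sphere measures, with the constraint $\sum r_j^2 < 1$ defining $\tau(\B^m)$. Integrating $|z^\alpha|^2$ over each sphere $\mbS^{2k_j-1}$ produces the standard beta-type constant involving $\kappa_j!$ and $(k_j-1)!$ (this is where the $\prod \kappa_j!(k_j-1)!$ denominator and the powers $r_j^{2\kappa_j}$ come from), and the radial integral over $\tau(\B^m)$ picks up $(1-|r|^2)^\lambda$ from the weight. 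Normalizing by $\|z^\alpha\|_\lambda^2$ --- itself computed by the same technique --- and by $\dim\cP_\kapb(\C^n)$, and using the combinatorial identity $\dim\cP_\kapb(\C^n) = \prod_j \binom{k_j+\kappa_j-1}{\kappa_j}$, yields the claimed formula, with the Gamma factor $\Gamma(n+\lambda+|\kapb|+1)/\Gamma(\lambda+1)$ coming from $c_\lambda$ together with the collapse of the simplex integral.

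The last ingredient is to replace $\widehat a$ by $a$ inside the integral: since $\widehat a(z) = \int_{\U(\kb)} a(A^{-1}z)\dif\mu(A)$ and, on the $r$-slice, $A^{-1}(r_1\zeta_1,\dots,r_m\zeta_m) = (r_1 A_1^{-1}\zeta_1,\dots,r_m A_m^{-1}\zeta_m)$, I would use the fact that the sphere $\mbS^{2k_j-1}$ is a homogeneous space for $\U(k_j)$ to rewrite the sphere average of $\zeta_j\mapsto a(\dots r_j\zeta_j\dots)$ as an average over $\U(\kb)$ of the single evaluation at the fixed unit vectors $u_j$; concretely, $\int_{\mbS^{2k_j-1}}(\cdots)\dif\sigma(\zeta_j) = \int_{\U(k_j)} (\cdots)\big|_{\zeta_j = A_j^{-1}u_j}\dif\mu_j(A_j)$ by uniqueness of the invariant probability measure. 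Carrying this out coordinate by coordinate converts all sphere integrals into the single $\U(\kb)$-integral appearing in the statement, and absorbing the Fubini swap between $\dif\mu(A)$ and $\prod\dif r_j$ finishes the proof.

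The main obstacle I expect is bookkeeping: tracking all the normalizing constants ($c_\lambda$, the sphere volumes $2\pi^{k_j}/(k_j-1)!$, the monomial norms $\kappa_j!/$multinomial factors, the dimension formula, and the simplex integral $\int_{\tau(\B^m)}\prod r_j^{2a_j-1}(1-|r|^2)^\lambda\prod\dif r_j$) and verifying that they combine exactly into $2^m\Gamma(n+\lambda+|\kapb|+1)/(\Gamma(\lambda+1)\prod_j\kappa_j!(k_j-1)!)$. None of the individual steps is conceptually hard, but the constant must come out precisely right, so the delicate point is choosing the monomial $z^\alpha$ and the coordinate system so that every factor is transparent; the factor $2^m$ in particular arises from the change of variables in the radial directions (from $r_j^2$ to $r_j$, contributing a factor $2$ per block).
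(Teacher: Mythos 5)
Your proposal is correct and follows essentially the same route as the paper: reduce via Theorem~\ref{thm:traceToeplitzTm} to $\gamma_{\lambda,\widehat a}(\kapb)\cdot\dim\cP_\kapb(\C^n)$, insert the explicit integral formula for $\gamma_{\lambda,\widehat a}(\kapb)$ of a $\kb$-quasi-radial symbol, and substitute $f(r_1,\dots,r_m)=\widehat a(r_1u_1,\dots,r_mu_m)=\int_{\U(\kb)}a(r_1A_1^{-1}u_1,\dots,r_mA_m^{-1}u_m)\,\dif\mu(A)$. The only real difference is that the paper cites Lemma~3.1 of \cite{VasQuasiRadial} for the quasi-radial $\gamma$ formula rather than re-deriving it by the polar-coordinate computation you outline, which avoids the bookkeeping you flag as the main obstacle; the sphere-average-to-group-average argument in your third paragraph is also superfluous, since $\widehat a$ is constant on each sphere slice and the substitution for $f(r)$ follows directly from the definition \eqref{eq:widehata}.
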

\begin{proof}
    For a given symbol $a \in L^\infty(\B^n)^{\T^m}$, and using the notation of Theorem~\ref{thm:traceToeplitzTm}, let $\widehat{a} \in L^\infty(\B^n)^{\U(\kb)}$ be the corresponding $\kb$-quasi-radial symbol. Then, there exist an essentially bounded measurable function $f$ such that
    \[
        \widehat{a}(z)
            = f(|z_{(1)}|, \dots, |z_{(m)}|)
    \]
    for $z \in \B^n$. In particular, for every $r \in \tau(\B^m)$ we have
    \begin{align*}
        f(r_1, \dots, r_m)
            &= \widehat{a}(r_1 u_1,
                \dots, r_m u_m) \\
            &= \int_{\U(\kb)} a(r_1 A_1^{-1} u_1, \dots, r_m A_m^{-1} u_m) \dif \mu(A).
    \end{align*}
    For the $\kb$-quasi-radial symbol $\widehat{a}$, Lemma~3.1 from \cite{VasQuasiRadial} shows that
    \begin{align*}
        \gamma_{\lambda, \widehat{a}}(\kapb)
            &=  \frac{2^m \Gamma(n + \lambda + |\kapb| + 1)}{\Gamma(\lambda +1) \prod_{j=1}^m(k_j + \kappa_j - 1)!} \times \\
            &\times \int_{\tau(\B^m)}
                f(r_1, \dots, r_m)
                    (1 - |r|^2)^\lambda
                \prod_{j=1}^m
                    r_j^{2k_j + 2\kappa_j - 1}
                        \dif r_j,
    \end{align*}
    for every $\kapb \in \N^m$. We also have the following well known formula
    \[
        \dim \cP_{\kapb}(\C^n)
            = \prod_{j=1}^m
                \dim \cP_{\kappa_j}(\C^n)
            = \prod_{j=1}^m
                \frac{(k_j + \kappa_j -1)!}{\kappa_j! (k_j -1)!}.
    \]
    On the other hand, from  Theorem~\ref{thm:traceToeplitzTm} we have
    \[
        \tr(T_a^{(\lambda)}|_{\cP_{\kapb}(\C^n)}) =
            \dim \cP_{\kapb}(\C^n)
                \gamma_{\lambda, \widehat{a}}(\kapb)
    \]
    for every $\kapb \in \N^m$. Hence, the result is obtained by replacing into the last formula the expressions for $f(r_1, \dots, r_m)$, $\gamma_{\lambda, \widehat{a}}(\kapb)$ and $\dim \cP_\kapb(\C^n)$ obtained above.
\end{proof}

\begin{remark}\label{rmk:traceToeplitzTm_integral}
    We observe that the integral expression from Theorem~\ref{thm:traceToeplitzTm_integral} does not depend on the choice of the unitary vectors $u_j \in \C^{k_j}$. This can be seen as a consequence of the bi-invariance of the Haar measure $\mu$ of $\U(\kb)$ and the fact that the group $\U(k_j)$ acts transitively on the unitary vectors of $\C^{k_j}$, for every $j = 1, \dots, m$.
\end{remark}

As found in previous works (see, for example, \cite{BVQuasiEllipticI}, \cite{BHVRadial} and \cite{GMVRadial}) it is useful to consider the subspaces of sequences in $\ell^\infty$ spaces associated to diagonalizable Toeplitz operators. For this reason, we state the following immediate consequence of Theorem~\ref{thm:traceToeplitzTm}. Note that, even though the $C^*$-algebra $\cT^{(\lambda)}(L^\infty(\B^n)^{\T^m})$ is in general not commutative, the next result provides some information on the asymptotic behavior of the Toeplitz operators that belong to this~$C^*$-algebra.

\begin{corollary}
\label{cor:operatorsequences}
    With the notation from Theorem~\ref{thm:traceToeplitzTm} and of the decompositions provided by \eqref{eq:gamma_lambdaa} and \eqref{eq:block_lambdaa}, for every $\lambda > -1$, the following spaces of sequences are the same.
    \begin{enumerate}
        \item The space $\{ \gamma_{\lambda, a} \mid a \in L^\infty(\B^n)^{\U(\kb)} \}$.
        \item The space of sequences of the form
            \[
                \left(
                \frac{\tr(T_a^{(\lambda)} |_{\cP_\kapb(\C^n)})}{\dim \cP_\kapb(\C^n)}
                \right)_{\kapb \in \N^m},
            \]
            where $a \in L^\infty(\B^n)^{\T^m}$.
    \end{enumerate}
\end{corollary}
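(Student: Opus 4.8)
The plan is to derive Corollary~\ref{cor:operatorsequences} as a direct reformulation of Theorem~\ref{thm:traceToeplitzTm}, exhibiting the two spaces of sequences as images of the \emph{same} map once we pass through the averaging construction $a \mapsto \widehat{a}$.

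First I would establish the inclusion (2) $\subset$ (1). Let $a \in L^\infty(\B^n)^{\T^m}$ and form $\widehat{a} \in L^\infty(\B^n)^{\U(\kb)}$ via \eqref{eq:widehata} with $H = \U(\kb)$, as in Theorem~\ref{thm:traceToeplitzTm}. That theorem gives, for every $\kapb \in \N^m$,
\[
    \gamma_{\lambda, \widehat{a}}(\kapb)
        = \frac{\tr(T_a^{(\lambda)} |_{\cP_\kapb(\C^n)})}{\dim \cP_\kapb(\C^n)},
\]
so the sequence displayed in (2) is precisely $\gamma_{\lambda, \widehat{a}}$, which lies in the space (1) since $\widehat{a} \in L^\infty(\B^n)^{\U(\kb)}$.

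For the reverse inclusion (1) $\subset$ (2), let $a \in L^\infty(\B^n)^{\U(\kb)}$. Then $a$ is already $\U(\kb)$-invariant, hence $\widehat{a} = a$ by \eqref{eq:widehata} (the integrand is constant in $A$ and $\mu$ is a probability measure), and in particular $a$ is also $\T^m$-invariant since $\T^m \subset \U(\kb)$. Applying Theorem~\ref{thm:traceToeplitzTm} with this $a$ playing the role of the $\T^m$-invariant symbol yields
\[
    \gamma_{\lambda, a}(\kapb) = \gamma_{\lambda, \widehat{a}}(\kapb)
        = \frac{\tr(T_a^{(\lambda)} |_{\cP_\kapb(\C^n)})}{\dim \cP_\kapb(\C^n)},
\]
so $\gamma_{\lambda, a}$ has exactly the form required in (2). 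This establishes both inclusions and hence the equality of the two spaces.

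There is no serious obstacle here: the result is essentially a dictionary entry translating Theorem~\ref{thm:traceToeplitzTm} into the language of sequence spaces. The only point demanding a line of care is the reverse inclusion, where one must notice that a $\U(\kb)$-invariant symbol is a fortiori $\T^m$-invariant (so the hypothesis of Theorem~\ref{thm:traceToeplitzTm} applies to it) and that the averaging operator fixes it; both are immediate from $\T^m \subset \U(\kb)$ and the definition \eqref{eq:widehata}.
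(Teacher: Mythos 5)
Your proof is correct and matches the paper's intent: the paper states the corollary as an immediate consequence of Theorem~\ref{thm:traceToeplitzTm} without spelling out the two inclusions, and your argument is precisely the expected unpacking — the averaging map $a \mapsto \widehat{a}$ sends $L^\infty(\B^n)^{\T^m}$ onto $L^\infty(\B^n)^{\U(\kb)}$ and Theorem~\ref{thm:traceToeplitzTm} identifies $\gamma_{\lambda,\widehat{a}}$ with the normalized trace sequence of $T^{(\lambda)}_a$, while the reverse inclusion follows since $\widehat{a}=a$ for $\U(\kb)$-invariant $a$ and $\T^m \subset \U(\kb)$. (One could shorten the reverse inclusion slightly by reading it straight off \eqref{eq:gamma_lambdaa}, which already gives $\tr(T_a^{(\lambda)}|_{\cP_\kapb(\C^n)}) = \gamma_{\lambda,a}(\kapb)\dim\cP_\kapb(\C^n)$, but your route through the theorem is equally valid.)
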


The case of radial Toeplitz operators ($\kb = (n)$ and $\U(\kb) = \U(n)$) is particularly interesting. This is because the results from \cite{BHVRadial} (see also \cite{GMVRadial}) provide very detailed information on the space of sequences obtained. Using \cite{BHVRadial} and Theorem~\ref{thm:traceToeplitzTm} we obtain the next result for Toeplitz operators with $\T$-invariant symbols ($m = 1$). We recall that the space of bounded slowly oscillating sequences is defined by
\[
    \mathrm{SO}(\N) =
        \{ x \in \ell^\infty(\N) \mid
            \lim_{\frac{r+1}{s+1}\to 1} |x_r - x_s| = 0
            \}.
\]
We refer to \cite{BHVRadial}, \cite{GMVRadial} and \cite{Schmidt} for further details and properties of this space.

\begin{theorem}\label{thm:traceToeplitzT_SON}
    Let $\T$ denote the center of the unitary group $\U(n)$. Let us consider, for every $\lambda > -1$, the space of sequences
    \[
        \mathcal{ST}^{(\lambda)}(\T,n) =
        \Bigg\{
            \Bigg(
                \frac{\kappa! (n-1)!}{(n + \kappa - 1)!}
                \tr(T_a^{(\lambda)}|_{\cP_\kappa(\C^n)})
            \Bigg)_{\kappa \in \N}
            \sVert[3] a \in L^\infty(\B^n)^\T
        \Bigg\}.
    \]
    Then, the $C^*$-algebra generated by the set $\mathcal{ST}^{(\lambda)}(\T,n)$ and the norm closure of $\mathcal{ST}^{(\lambda)}(\T,n)$ in $\ell^\infty(\N)$ are both equal to $\mathrm{SO}(\N)$.
\end{theorem}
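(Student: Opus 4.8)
The plan is to reduce everything to the case $m=1$, $\kb=(n)$, $\U(\kb)=\U(n)$ of Theorem~\ref{thm:traceToeplitzTm}, where the symbol $\widehat{a}$ is genuinely radial, and then quote the known description of the radial Toeplitz sequences from \cite{BHVRadial}. First I would observe that when $m=1$ the quantity
\[
    \frac{\kappa!(n-1)!}{(n+\kappa-1)!}\,\tr\bigl(T_a^{(\lambda)}|_{\cP_\kappa(\C^n)}\bigr)
        = \frac{\tr\bigl(T_a^{(\lambda)}|_{\cP_\kappa(\C^n)}\bigr)}{\dim\cP_\kappa(\C^n)}
\]
is exactly $\gamma_{\lambda,\widehat{a}}(\kappa)$ by the second identity in Theorem~\ref{thm:traceToeplitzTm}, where $\widehat{a}\in L^\infty(\B^n)^{\U(n)}$ is the radial average of $a$. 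Conversely, every radial symbol $b\in L^\infty(\B^n)^{\U(n)}$ is already of the form $b=\widehat{b}$ (averaging over $\U(n)$ does nothing), and trivially $b\in L^\infty(\B^n)^\T$ since $\T\subset\U(n)$; so as $a$ ranges over $L^\infty(\B^n)^\T$ the symbol $\widehat{a}$ ranges over all of $L^\infty(\B^n)^{\U(n)}$. This gives the set equality
\[
    \mathcal{ST}^{(\lambda)}(\T,n)
        = \bigl\{\gamma_{\lambda,b}\mid b\in L^\infty(\B^n)^{\U(n)}\bigr\},
\]
i.e.\ $\mathcal{ST}^{(\lambda)}(\T,n)$ is precisely the space of eigenvalue sequences of radial Toeplitz operators (this is the content of Corollary~\ref{cor:operatorsequences} specialized to $\kb=(n)$).

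Next I would invoke the main structural results of \cite{BHVRadial} on radial Toeplitz operators: for each $\lambda>-1$, the set of eigenvalue sequences $\{\gamma_{\lambda,b}\mid b\in L^\infty(\B^n)^{\U(n)}\}$ has norm closure in $\ell^\infty(\N)$ equal to $\mathrm{SO}(\N)$, and moreover the $C^*$-algebra it generates is also $\mathrm{SO}(\N)$. Concretely, \cite{BHVRadial} shows that every such $\gamma_{\lambda,b}$ is slowly oscillating (giving the inclusion into $\mathrm{SO}(\N)$), and conversely that $\mathrm{SO}(\N)$ lies in the closure by an explicit approximation/density argument realizing slowly oscillating sequences as limits of radial-symbol eigenvalue sequences; since $\mathrm{SO}(\N)$ is a closed (indeed $C^*$-) subalgebra of $\ell^\infty(\N)$, the closure and the generated $C^*$-algebra coincide with it. Combining this with the set equality from the previous paragraph immediately yields that both the norm closure of $\mathcal{ST}^{(\lambda)}(\T,n)$ and the $C^*$-algebra it generates equal $\mathrm{SO}(\N)$.

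I would then spell out the two inclusions cleanly. For "$\subseteq$'': by \cite{BHVRadial} every $\gamma_{\lambda,b}\in\mathrm{SO}(\N)$, hence $\mathcal{ST}^{(\lambda)}(\T,n)\subset\mathrm{SO}(\N)$; since $\mathrm{SO}(\N)$ is norm-closed and closed under the $C^*$-operations, both the norm closure and the generated $C^*$-algebra of $\mathcal{ST}^{(\lambda)}(\T,n)$ stay inside $\mathrm{SO}(\N)$. For "$\supseteq$'': \cite{BHVRadial} provides, for every $x\in\mathrm{SO}(\N)$, radial symbols $b_N$ with $\gamma_{\lambda,b_N}\to x$ in $\ell^\infty(\N)$; since each $\gamma_{\lambda,b_N}\in\mathcal{ST}^{(\lambda)}(\T,n)$, this places $x$ in the norm closure, and a fortiori in the generated $C^*$-algebra. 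The two inclusions give both equalities. I do not expect any single step to be a serious obstacle: the only real content is the translation in the first paragraph (set equality of $\mathcal{ST}^{(\lambda)}(\T,n)$ with the radial eigenvalue sequences via Theorem~\ref{thm:traceToeplitzTm} and $\widehat{\widehat{a}}=\widehat{a}$), and after that the statement is a direct citation of \cite{BHVRadial}. If there is a subtle point, it is in matching the normalization constants — confirming that the factor $\kappa!(n-1)!/(n+\kappa-1)!$ is exactly $1/\dim\cP_\kappa(\C^n)$, which follows from the dimension formula $\dim\cP_\kappa(\C^n)=(n+\kappa-1)!/(\kappa!(n-1)!)$ already used in the proof of Theorem~\ref{thm:traceToeplitzTm_integral} — but this is a routine check rather than a genuine difficulty.
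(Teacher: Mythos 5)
Your proof is correct and follows essentially the same route as the paper: the paper invokes Corollary~\ref{cor:operatorsequences} (itself a direct consequence of Theorem~\ref{thm:traceToeplitzTm}) to identify $\mathcal{ST}^{(\lambda)}(\T,n)$ with the set of radial eigenvalue sequences $\{\gamma_{\lambda,b} \mid b \in L^\infty(\B^n)^{\U(n)}\}$, and then cites Theorem~5.4 and Corollary~5.5 of \cite{BHVRadial}. You re-derive that set equality directly from Theorem~\ref{thm:traceToeplitzTm} (together with the observation that radial symbols are $\T$-invariant and fixed by the $\U(n)$-average), which is the same content as Corollary~\ref{cor:operatorsequences} specialized to $\kb=(n)$.
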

\begin{proof}
    We observe that $\mathcal{ST}^{(\lambda)}(\T,n)$ is precisely the space of sequences described in Corollary~\ref{cor:operatorsequences}(2) for the case $m = 1$ and $\kb = (n)$. It follows from this corollary that $\mathcal{ST}^{(\lambda)}(\T,n)$ coincides with the space of sequences $\gamma_{\lambda,a}$ where $a \in L^\infty(\B^n)$ runs through all radial symbols. Hence, the result is a consequence of Theorem~5.4 and Corollary~5.5 from \cite{BHVRadial}.
\end{proof}

\begin{remark}
    The elements of $\mathcal{ST}^{(\lambda)}(\T,n)$ can be seen as sequences of normalized traces of the blocks from the decomposition \eqref{eq:block_lambdaa}, when $m = 1$, for Toeplitz operators $T_a^{(\lambda)}$ with essentially bounded $\T$-invariant symbols $a$. The normalization is given by the dimensions
    \[
        \dim \cP_\kappa(\C^n) =
            \frac{(n + \kappa - 1)!}{\kappa! (n - 1)!}
    \]
    of the subspaces with respect to which the decomposition \eqref{eq:block_lambdaa} is obtained. Note that for $n > 1$, the $C^*$-algebra $\cT^{(\lambda)}(L^\infty(\B^n)^\T)$ is non-commutative (see Corollary~\ref{cor:pikapb_irreducible}) and it is expected to be very complicated to describe. Nevertheless, Theorem~\ref{thm:traceToeplitzT_SON} provides asymptotic information for the Toeplitz operators that belong to $\cT^{(\lambda)}(L^\infty(\B^n)^\T)$.
\end{remark}

\section{Quasi-homogeneous symbols and commutative Banach algebras}\label{sec:quasi}
As before, we fix a partition $\kb \in \Z_+^m$ and the corresponding subgroups $\U(\kb)$ and $\T^m$ of $\U(n)$. We now further consider for every $j = 1, \dots, m$ the subgroup defined by
\[
    \U(\kb,j,\T) = \U(k_1)\times \dots \times U(k_{j-1}) \times \T I_{(j)} \times \U(k_{j+1}) \times \dots \times \U(k_m),
\]
where, as denoted before, $I_{(j)}$ is the identity map on $\C^{k_j}$. In particular, we have
\[
    \T^m \subset \U(\kb,j,\T) \subset \U(\kb)
\]
for every $j =1, \dots, m$. Hence, it will be convenient to assume, from now on, that $m \geq 2$ to avoid trivial cases. We also note that these inclusions yield the obvious implications for the invariance of symbols.

As proved in Corollary~\ref{cor:pikapb_irreducible}, for $m < n$, the $C^*$-algebra $\cT^{(\lambda)}(L^\infty(\B^n)^{\T^m})$ is non-commutative. However, we will use the subgroups $\U(\kb,j,\T)$ to build commutative Banach subalgebras of $\cT^{(\lambda)}(L^\infty(\B^n)^{\T^m})$ that enlarge the commutative $C^*$-algebra $\cT^{(\lambda)}(L^\infty(\B^n)^{\U(\kb)})$. This will be achieved by considering special symbols obtained from the group $\U(\kb,j,\T)$. We now introduce such symbols.

\begin{definition}\label{def:kbj-quasi}
    With the previous notation a symbol $a \in L^\infty(\B^n)$ is called $(\kb,j)$-quasi-radial quasi-homogeneous if it is $\U(\kb,j,\T)$-invariant. We will also say that $a$ is quasi-radial quasi-homogeneous for the group $\U(\kb,j,\T)$.
\end{definition}

Note that the space of essentially bounded $(\kb,j)$-quasi-radial quasi-homogeneous symbols is given by $L^\infty(\B^n)^{\U(\kb,j,\T)}$.

For our given partition $\kb$, we have considered the decomposition
\[
    z = (z_{(1)}, \dots, z_{(m)})
\]
defined above for every $z \in \C^n$. We will now recall further coordinate decompositions that will be useful for our purposes. More precisely, for a given $z \in \C^n$ we write
\[
    r_j = |z_{(j)}|, \quad
    z_{(j)} = r_j \xi_{(j)},
\]
for every $j =1, \dots, m$, where $\xi_{(j)} \in \mbS^{k_j}$ and $\mbS^{k_j}$ is the unit sphere in $\C^{k_j}$. We also consider the decomposition given by writing
\[
    \xi_{(j)} = t_{(j)}\cdot s_{(j)},
\]
for every $j = 1, \dots, m$, where $t_{(j)} \in \T^{k_j}$ and $s_{(j)} \in S_+^{k_j-1} = \mbS^{k_j} \cap \R_+^{k_j}$. Note that the expression $t_{(j)}\cdot s_{(j)}$ is the notation used for the $\T^{k_j}$-action on $\C^{k_j}$. Hence, we have
\[
    z_{(j)} = r_j t_{(j)}\cdot s_{(j)},
\]
for every $j = 1, \dots, m$. These yield corresponding expressions associated to every $z \in \C^n$ given by
\begin{align*}
    \xi &= (\xi_{(1)}, \dots, \xi_{(m)}) \in
        \mbS^{k_1} \times \dots \times \mbS^{k_m}, \\
    t &= (t_{(1)}, \dots, t_{(m)}) \in \T^n, \\
    s &= (s_{(1)}, \dots, s_{(m)}) \in
        S_+^{k_1 - 1} \times \dots \times S_+^{k_m - 1}.
\end{align*}
In particular, we have
\[
    z = (r_1 \xi_{(1)}, \dots, r_m \xi_{(m)})
        = (r_1 t_{(1)}\cdot s_{(1)}, \dots,
            r_m t_{(m)}\cdot s_{(m)}),
\]
for every $z \in \C^n$.

It is straightforward to prove the next characterization of the symbols introduced in Definition~\ref{def:kbj-quasi}.

\begin{lemma}\label{lem:kbj-quasi}
    With the previous notation, a symbol $a \in L^\infty(\B^n)$ is $(\kb,j)$-quasi-radial quasi-homogeneous if and only if the following equivalent conditions hold.
    \begin{enumerate}
        \item There is a function $f \in L^\infty(\tau(\B^m) \times \mbS^{k_j})$ that satisfies
            \[
                f(r, \eta\xi) = f(r,\xi)
            \]
            for every $r \in \tau(\B^m)$, $\xi \in \mbS^{k_j}$ and $\eta\in \T$, such that
            \[
                a(z) = f(|z_{(1)}|, \dots, |z_{(m)}|, \xi_{(j)}),
            \]
            for every $z \in \B^n$.
        \item There is a function $g \in L^\infty(\tau(\B^m) \times S_+^{k_j-1} \times \T^{k_j})$ that satisfies
            \[
                g(r,s,\eta t) = g(r,s,t)
            \]
            for every $r \in \tau(\B^m)$, $s \in S_+^{k_j-1}$, $t \in \T^{k_j}$ and $\eta \in \T$, such that
            \[
                a(z) = g(|z_{(1)}|, \dots, |z_{(m)}|, s_{(j)}, t_{(j)}),
            \]
            for every $z \in \B^n$.
    \end{enumerate}
\end{lemma}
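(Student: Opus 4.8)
The plan is to prove Lemma~\ref{lem:kbj-quasi} by unwinding the definition of $\U(\kb,j,\T)$-invariance in the coordinate decompositions introduced just before the statement, showing both characterizations are literal reformulations of this invariance. First I would establish (1). A symbol $a \in L^\infty(\B^n)$ is $\U(\kb,j,\T)$-invariant if and only if $a(A^{-1}z) = a(z)$ for a.e.~$z$ and every $A = (A_1, \dots, A_{j-1}, \eta I_{(j)}, A_{j+1}, \dots, A_m) \in \U(\kb,j,\T)$. Applying such an $A$ componentwise to $z = (z_{(1)}, \dots, z_{(m)})$ sends $z_{(i)} \mapsto A_i^{-1} z_{(i)}$ for $i \neq j$ and $z_{(j)} \mapsto \eta^{-1} z_{(j)}$. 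Since each $\U(k_i)$ ($i \neq j$) acts transitively on spheres of each radius in $\C^{k_i}$, the invariance under the factors $\U(k_i)$ forces $a$ to depend on the blocks $z_{(i)}$, $i \neq j$, only through $r_i = |z_{(i)}|$; and since for $i=j$ only the central circle $\T I_{(j)}$ is present, $a$ may still depend on the full direction $\xi_{(j)} = z_{(j)}/r_j \in \mbS^{k_j}$ but, by invariance under $\eta \in \T$ acting as $\xi_{(j)} \mapsto \eta^{-1}\xi_{(j)}$, only through a function $f$ satisfying $f(r,\eta\xi)=f(r,\xi)$. Conversely, any $a$ of the stated form is manifestly $\U(\kb,j,\T)$-invariant. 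The measurability and essential boundedness of $f$ transfer back and forth using that the map $z \mapsto (|z_{(1)}|, \dots, |z_{(m)}|, \xi_{(j)})$ is a measurable fibration with the fibers being exactly the $\U(\kb,j,\T)$-orbits (up to null sets), so one can quotient and apply a standard disintegration/section argument.

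For part (2), the plan is to observe that it is simply a further coordinate refinement of (1): writing $\xi_{(j)} = t_{(j)} \cdot s_{(j)}$ with $t_{(j)} \in \T^{k_j}$, $s_{(j)} \in S_+^{k_j-1}$, the condition $f(r,\eta\xi_{(j)}) = f(r,\xi_{(j)})$ for $\eta \in \T$ becomes, after substituting $\xi_{(j)} = t_{(j)}\cdot s_{(j)}$ and noting $\eta \xi_{(j)} = (\eta t_{(j)})\cdot s_{(j)}$ (scalar $\eta$ folds into the torus component), the condition $g(r,s,\eta t) = g(r,s,t)$. So I would define $g(r, s_{(j)}, t_{(j)}) := f(r, t_{(j)}\cdot s_{(j)})$ and check it is well-defined and has the asserted invariance; conversely $f(r,\xi_{(j)}) := g(r, s_{(j)}, t_{(j)})$ for any decomposition $\xi_{(j)} = t_{(j)}\cdot s_{(j)}$ is well-defined precisely because of the $\eta$-invariance of $g$. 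Again the $L^\infty$ measurability passes through because the change of coordinates $\xi_{(j)} \leftrightarrow (s_{(j)}, t_{(j)})$ is a measurable isomorphism off a null set.

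The main obstacle — the only point requiring genuine care rather than bookkeeping — is the measure-theoretic regularity: going from "$a$ is invariant under the group" to "$a$ factors through the orbit map with an $L^\infty$ function on the quotient" requires that the orbit map admit a measurable section (or equivalently that the quotient measure space be standard and the disintegration behave well), and that essential boundedness of $a$ is equivalent to essential boundedness of $f$ with respect to the pushforward measure. This is routine for compact group actions on manifolds with the invariant measure class, but it is the step one should not simply wave away. Everything else is a direct translation between the three coordinate descriptions of $\B^n$ already set up before the lemma, which is why the statement is described as "straightforward to prove." I would keep the write-up brief: establish the orbit structure of $\U(\kb,j,\T)$, invoke transitivity of $\U(k_i)$ on spheres for $i \neq j$, and then record the two reformulations, leaving the standard measurability remarks as a parenthetical.
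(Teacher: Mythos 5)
The paper does not give a proof of this lemma; it is stated with the remark that it is ``straightforward to prove,'' and your argument is exactly the straightforward route one would take. Your forward direction for (1) correctly reduces the $\U(\kb,j,\T)$-invariance to: transitivity of $\U(k_i)$ on spheres in $\C^{k_i}$ for $i\neq j$ (giving dependence only on $r_i$) plus invariance under the scalar $\T$-action on the $j$-th block (giving the stated $\T$-invariance of $f$ in $\xi_{(j)}$); the converse is immediate. Part (2) is, as you say, a further polar decomposition $\xi_{(j)} = t_{(j)}\cdot s_{(j)}$, and the $L^\infty$/measurability issues are handled as you note.

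One small misattribution: you write that passing from $g$ back to $f$ via $f(r,\xi_{(j)}) := g(r,s_{(j)},t_{(j)})$ is ``well-defined precisely because of the $\eta$-invariance of $g$.'' That is not quite the right role for the $\eta$-invariance. The a.e.\ well-definedness of $f$ comes from the fact that $\xi_{(j)} \mapsto (s_{(j)}, t_{(j)})$ is a bijection off the null set where some coordinate of $\xi_{(j)}$ vanishes (where $s_{(j)}\neq 0$ the pair $(s_{(j)},t_{(j)})$ is uniquely determined by $\xi_{(j)}$, and ambiguities in $t_{(j)}$ occur only on that null set); this has nothing to do with the $\eta$-invariance of $g$. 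What the $\eta$-invariance of $g$ actually buys you is the required $\T$-invariance $f(r,\eta\xi_{(j)}) = f(r,\xi_{(j)})$, via $\eta\xi_{(j)} = (\eta t_{(j)})\cdot s_{(j)}$. The argument as a whole is still correct; just separate the two claims cleanly when you write it up.
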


The notions of quasi-homogeneous and pseudo-homogeneous symbols have been considered before (recall the remarks from the Introduction). We briefly describe those previous notions and compare them with Definition~\ref{def:kbj-quasi}. The claims below are immediate consequences of Lemma~\ref{lem:kbj-quasi}.

A quasi-homogeneous symbol is defined in \cite{VasQuasiRadial} as a function $\varphi_{p,q} \in L^\infty(\B^n)$ of the form
\[
    \varphi_{p,q}(z) = \xi^p \overline{\xi}^q
        = \prod_{j=1}^m
            \xi_{(j)}^{p_{(j)}}
            \overline{\xi}_{(j)}^{q_{(j)}},
\]
where $p, q \in \Z^n$. In \cite{VasQuasiRadial} the author considers further conditions that allow to obtain interesting properties for the corresponding Toeplitz operators. More precisely, the following condition is assumed to hold
\[
    |p_{(j)}| = |q_{(j)}|,
\]
for every $j = 1, \dots, m$. With this condition, the quasi-homogeneous symbol $\varphi_{p,q}$ is $\T^m$-invariant. Furthermore, for every $j = 1, \dots, m$, the symbol given~by
\[
    z \mapsto \xi_{(j)}^{p_{(j)}}
            \overline{\xi}_{(j)}^{q_{(j)}},
\]
is $(\kb,j)$-quasi-radial quasi-homogeneous. Hence, $\varphi_{p,q}$ is a product of quasi-radial quasi-homogeneous symbols from Definition~\ref{def:kbj-quasi} for varying values of~$j$.

On the other hand, in \cite{VasPseudoH} the author defines the $\kb$-pseudo-homogeneous symbols as essentially bounded functions of the form
\[
    \psi(z) = b(s_{(1)}, \dots, s_{(m)}) \prod_{j=1}^m t_{(j)}^{p_{(j)}},
\]
for every $z \in \B^n$, where $p \in \Z^n$. As before, further conditions are considered. More precisely, it is assumed that
\begin{align*}
    b(s_{(1)}, \dots, s_{(m)})
        &= \prod_{j=1}^m b_j(s_{(j)})  \\
     |p_{(j)}| &= 0, \quad
        \text{for all $j=1,\dots, m$},
\end{align*}
for some functions $b_j$. With these assumptions, the symbol $\psi$ is clearly $\T^m$-invariant. Furthermore, for every $j = 1,\dots, m$, the symbol given by
\[
    z \mapsto b_j(s_{(j)}) t_{(j)}^{p_{(j)}},
\]
is $(\kb,j)$-quasi-radial quasi-homogeneous. Again, with the previous assumptions, $\psi$ is a product of symbols from Definition~\ref{def:kbj-quasi}.

Finally, in \cite{VasTm2018} (see also \cite{Rodriguez2020}) it is considered the $\T^m$-invariant symbols of the form
\[
    g(z) = a(r_1, \dots, r_m) \prod_{j=h+1}^m b_j(s_{(j)}) c_j(t_{(j)})
\]
for every $z \in \B^n$. In this case, the functions $c_j$ are all assumed to satisfy the $\T$-invariance condition
\[
    c_j(\eta t_{(j)}) = c_j(t_{(j)}),
\]
for all $\eta \in \T$. With this assumption, it is now clear that, for every $j = 1, \dots, m$, the symbol given by
\[
    z \mapsto b_j(s_{(j)}) c_j(t_{(j)}),
\]
is $(\kb,j)$-quasi-radial quasi-homogeneous, and $g$ is a product of symbols from Definition~\ref{def:kbj-quasi}.

\begin{remark}\label{rmk:(k,j)-quasi}
    A consequence of the previous observations is that, for every $j = 1, \dots, m$, the quasi-radial quasi-homogeneous and pseudo-homogeneous symbols considered in the previous literature and that depend only on $z_{(j)}$ (for $z \in \B^n$) are particular cases of the more general $(\kb,j)$-quasi-radial quasi-homogeneous symbols from Definition~\ref{def:kbj-quasi}.
\end{remark}

Corollary~\ref{cor:HinvariantToeplitz} implies that every $(\kb,j)$-quasi-radial quasi-homogeneous symbol $a$ yields, for every $\lambda > -1$, a Toeplitz operator $T^{(\lambda)}_a$ that intertwines the representation $\pi_\lambda|_{\U(\kb,j,\T)}$. Hence, it will be useful to describe the operators intertwining such representation of the group $\U(\kb,j,\T)$.

\begin{proposition}\label{prop:EndU(kb,j,T)-tensorproduct}
    Let $\lambda > -1$, $\kb \in \Z_+^m$ and $j \in \{1, \dots, m\}$ be given. We will assume that $m \geq 2$. If $T \in \End_{\U(\kb,j,\T)}(\cA^2_\lambda(\B^n))$, then, for every $\kapb \in \N^m$ we have
    \[
        T(\cP_{\kapb}(\C^n)) \subset \cP_{\kapb}(\C^n),
    \]
    and there exists a linear map $M_{\lambda, T}(\kapb, j) \in \End(\cP_{\kappa_j}(\C^n))$ such that
    \[
        T|_{\cP_\kapb(\C^n)} =
            U_\kapb \circ (I_{(1)}\otimes \dots \otimes I_{(j-1)} \otimes M_{\lambda, T}(\kapb, j) \otimes I_{(j+1)} \otimes \dots \otimes I_{(m)}) \circ U_\kapb^{-1}
    \]
    where $U_\kapb : \bigotimes_{j=1}^m \cP_{\kappa_j}(\C^{k_j}) \rightarrow \cP_\kapb(\C^n)$ is the isomorphism given by Proposition~\ref{prop:Pkapb-tensorproduct}.
\end{proposition}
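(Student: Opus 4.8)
The plan is to combine the containment statement already available from the larger group $\T^m$ with an application of Schur's Lemma to the factors of the tensor product decomposition. First I would note that since $\T^m \subset \U(\kb,j,\T)$, the containment $T(\cP_\kapb(\C^n)) \subset \cP_\kapb(\C^n)$ for every $\kapb \in \N^m$ follows immediately from Proposition~\ref{prop:intertwiningTm}: an operator intertwining $\pi_\lambda|_{\U(\kb,j,\T)}$ in particular intertwines $\pi_\lambda|_{\T^m}$, and hence preserves the isotypic components of the latter, which are exactly the spaces $\cP_\kapb(\C^n)$. So the bulk of the work is to analyze the restriction $T|_{\cP_\kapb(\C^n)}$ as an intertwining operator for the $\U(\kb,j,\T)$-action on this finite dimensional space.

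Next I would transport the problem through the isomorphism $U_\kapb$ of Proposition~\ref{prop:Pkapb-tensorproduct}, so that $\cP_\kapb(\C^n)$ is identified with the outer tensor product $\bigotimes_{i=1}^m \cP_{\kappa_i}(\C^{k_i})$, on which $\U(\kb)$ — and hence its subgroup $\U(\kb,j,\T)$ — acts. Under this identification the operator $\widetilde{T} = U_\kapb^{-1} \circ T|_{\cP_\kapb(\C^n)} \circ U_\kapb$ is a $\U(\kb,j,\T)$-equivariant endomorphism of the tensor product. Now the key point is the representation-theoretic structure of this tensor product as a $\U(\kb,j,\T)$-module: for the factors $i \neq j$ the group $\U(k_i)$ still acts on $\cP_{\kappa_i}(\C^{k_i})$ irreducibly (Proposition~\ref{prop:isotypicU(n)} applied to $\U(k_i)$), while for the $j$-th factor the group has been cut down to its center $\T I_{(j)}$, which acts on $\cP_{\kappa_j}(\C^{k_j})$ by the single scalar character $t \mapsto t^{-\kappa_j}$ (by Lemma~\ref{lem:cPkapb}, every element of $\cP_{\kappa_j}(\C^{k_j})$ is homogeneous of degree $\kappa_j$). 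Therefore, as a $\U(\kb,j,\T)$-module, $\bigotimes_{i=1}^m \cP_{\kappa_i}(\C^{k_i})$ is isomorphic to $\cP_{\kappa_j}(\C^{k_j}) \otimes \bigl(\bigotimes_{i \neq j} \cP_{\kappa_i}(\C^{k_i})\bigr)$ where the first tensor factor carries the trivial group action (up to a fixed scalar) and the second factor is an irreducible module for $\prod_{i \neq j}\U(k_i)$, again by the characterization of irreducibles of a product as an outer tensor product (as in Proposition~\ref{prop:Pkapb-tensorproduct}).

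Given this, the conclusion follows from Schur's Lemma in its tensor-product form: any endomorphism of a space of the shape $W \otimes V$ commuting with a group acting trivially on $W$ and irreducibly on $V$ must be of the form $M \otimes I_V$ for a unique $M \in \End(W)$. Indeed, the commutant of the algebra generated by the $\U(\kb,j,\T)$-action on $\bigotimes_i \cP_{\kappa_i}(\C^{k_i})$ is $\End(\cP_{\kappa_j}(\C^{k_j})) \otimes \C I$, by the double commutant / Schur description of the commutant of an irreducible action on the second factor. Thus there is a unique $M_{\lambda,T}(\kapb,j) \in \End(\cP_{\kappa_j}(\C^{k_j}))$ with $\widetilde{T} = M_{\lambda,T}(\kapb,j) \otimes \bigl(\bigotimes_{i \neq j} I_{(i)}\bigr)$, which after re-ordering the tensor factors and conjugating back by $U_\kapb$ gives precisely the claimed formula. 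The main obstacle I anticipate is being careful and explicit about the two bookkeeping facts underlying the Schur step: first, that the $j$-th factor really is $\U(\kb,j,\T)$-trivial up to the scalar $t^{-\kappa_j}$, which is where Lemma~\ref{lem:cPkapb} enters; and second, correctly identifying the commutant of an outer tensor product action that is irreducible on one set of factors and scalar on another — this is standard but must be invoked cleanly, perhaps citing \cite{BtD} or \cite{GW} for the commutant description, rather than re-derived from scratch. No nontrivial estimate or computation is needed; the difficulty is purely in assembling the module-theoretic picture correctly.
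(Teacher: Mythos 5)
Your proof is correct and follows essentially the same path as the paper's: containment via the $\T^m$-invariance together with Proposition~\ref{prop:intertwiningTm}, transfer to $\bigotimes_i \cP_{\kappa_i}(\C^{k_i})$ through $U_\kapb$, and Schur's Lemma applied to the tensor factors. The only divergence is stylistic: where the paper picks a basis $(f_l)$ of $\cP_{\kappa_j}(\C^{k_j})$ and extracts the form $M \otimes I$ by computing the matrix coefficients $c_{l_1 l_2}$ one pair of summands at a time, you invoke the standard commutant description $\End(W) \otimes \C I_V$ for a representation that is scalar on $W = \cP_{\kappa_j}(\C^{k_j})$ and irreducible on $V = \bigotimes_{i \neq j}\cP_{\kappa_i}(\C^{k_i})$ --- a cleaner packaging of the same Schur-theoretic content.
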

\begin{proof}
    For simplicity we will assume that $j = m$, since the general case can be considered similarly.

    Let $T \in \End_{\U(\kb,m,\T)}(\cA^2_\lambda(\B^n))$ be given. Since $\T^m \subset \U(\kb,m,\T)$ it follows that $T$ is $\T^m$-equivariant and by Proposition~\ref{prop:intertwiningTm} it follows that $T(\cP_\kapb(\C^n)) \subset \cP_\kapb(\C^n)$ for every $\kapb \in \N^m$. Hence, we can consider the linear map $\widetilde{T} = U_\kapb^{-1}\circ T \circ U_\kapb$ acting on the tensor product $\bigotimes_{j=1}^m \cP_{\kappa_j}(\C^{k_j})$. By Proposition~\ref{prop:Pkapb-tensorproduct}, $U_\kapb$ is $\U(\kb)$-equivariant and so it is $\U(\kb,m,\T)$-equivariant as well. It follows that $\widetilde{T}$ is an intertwining map for the $\U(\kb,m,\T)$-action given by the outer tensor product action on $\bigotimes_{j=1}^m \cP_{\kappa_j}(\C^{k_j})$.

    We now fix $\kapb \in \N^m$. We will consider the partition $\kb' = (k_1, \dots, k_{m-1})$ of $n' = |\kb'|$. Choose $(f_l)_{l = 1}^N$ a basis for $\cP_{\kappa_m}(\C^{k_m})$. Then, we have a direct sum decomposition
    \begin{equation}\label{eq:sumkapbfl}
        \bigotimes_{j=1}^m \cP_{\kappa_j}(\C^{k_j})
        = \bigoplus_{l = 1}^N
        \bigotimes_{j=1}^{m-1} \cP_{\kappa_j}(\C^{k_j})
        \otimes f_l.
    \end{equation}
    We observe that the action of the subgroup $\U(\kb')$ of $\U(\kb)$ on the terms of \eqref{eq:sumkapbfl} is given by
    \[
        A \cdot (p \otimes f_l) =
            (A \cdot p)\otimes f_l
    \]
    for every $A \in \U(\kb')$, $p \in \bigotimes_{j=1}^{m-1} \cP_{\kappa_j}(\C^{k_j})$ and $l = 1, \dots, N$. It follows that the direct sum \eqref{eq:sumkapbfl} is a decomposition into $\U(\kb')$-submodules, which are irreducible by Proposition~\ref{prop:Pkapb-tensorproduct}. In fact, these $\U(\kb')$-submodules are all isomorphic to the irreducible $\U(\kb')$-module $\bigotimes_{j=1}^{m-1} \cP_{\kappa_j}(\C^{k_j})$.

    Hence, \eqref{eq:sumkapbfl} allows us to write $\widetilde{T}$ as a linear transformation
    \[
        \widetilde{T} : \bigoplus_{l = 1}^N \bigotimes_{j=1}^{m-1} \cP_{\kappa_j}(\C^{k_j}) \otimes f_l \rightarrow \bigoplus_{l = 1}^N \bigotimes_{j=1}^{m-1} \cP_{\kappa_j}(\C^{k_j}) \otimes f_l
    \]
    which we know that is intertwining for $\U(\kb,m,\T)$ and so it is intertwining for the subgroup $\U(\kb')$ as well. If we denote by $\pi_l$ the projection into the $l$-th term of the direct sum \eqref{eq:sumkapbfl}, it follows from Schur's Lemma that for every $l_1, l_2 = 1, \dots, N$, there is a constant $c_{l_1 l_2} \in \C$ such that
    \[
        \pi_{l_2} \circ \widetilde{T}|_{\bigotimes_{j=1}^{m-1} \cP_{\kappa_j}(\C^{k_j}) \otimes f_{l_1}} : \bigotimes_{j=1}^{m-1} \cP_{\kappa_j}(\C^{k_j}) \otimes f_{l_1} \rightarrow \bigotimes_{j=1}^{m-1} \cP_{\kappa_j}(\C^{k_j}) \otimes f_{l_2}
    \]
    is given by
    \[
        p \otimes f_{l_1} \mapsto c_{l_2 l_1} p \otimes f_{l_2},
    \]
    for every $p \in \bigotimes_{j=1}^{m-1} \cP_{\kappa_j}(\C^{k_j})$.

    Let $p \in \bigotimes_{j=1}^{m-1} \cP_{\kappa_j}(\C^{k_j})$ and $f \in \cP_{\kappa_m}(\C^{k_m})$ be given, and let us write
    \[
        f = \sum_{l=1}^N a_l f_l
    \]
    for some $a_l \in \C$. Then, we obtain from the previous remarks the following
    \begin{align*}
        \widetilde{T}(p \otimes f)
            &= \sum_{l_1=1}^N a_{l_1} \widetilde{T}(p\otimes f_{l_1}) \\
            &= \sum_{l_1, l_2 =1}^N
                a_{l_1} c_{l_2 l_1} p \otimes f_{l_2}  \\
            &= p \otimes
                \left( \sum_{l_1, l_2 =1}^N
                c_{l_2 l_1} a_{l_1} f_{l_2}
                \right).
    \end{align*}
    We conclude that
    \[
        \widetilde{T} =
        I_{(1)} \otimes \dots \otimes I_{(m-1)} \otimes M_{\lambda, T}(\kapb,m),
    \]
    where $M_{\lambda, T}(\kapb,m)$ is the element of $\End_{\kappa_m}(\C^{k_m})$ whose matrix is given by the coefficients $c_{l_1 l_2}$ with respect to the basic elements $f_l$ ($l = 1, \dots, N$).
\end{proof}

\begin{remark}
    The expression for the operators $T \in \End_{\U(\kb,j,\T)}(\cA^2_\lambda(\B^n))$ as a tensor product, up to the isomorphism $U_\kapb$, is similar to one obtained in Theorem~5.2 from \cite{VasTm2018}. However, the expression from \cite{VasTm2018} is given only for Toeplitz operators, acting on Fock spaces, with symbols of the form
    \[
        z \mapsto b_j(s_{(j)}) c_j(t_{(j)})
    \]
    defined on $\C^n$, where $c_j$ is $\T$-invariant. Meanwhile, the expression obtained in our Proposition~\ref{prop:EndU(kb,j,T)-tensorproduct} holds for the more general $\U(\kb,j,\T)$-intertwining operators acting on Bergman spaces. In particular, it also holds for Toeplitz operators, acting on Bergman spaces, whose symbols are our more general $(\kb,j)$-quasi-radial quasi-homogeneous symbols as given in Definition~\ref{def:kbj-quasi}.
\end{remark}

Proposition~\ref{prop:EndU(kb,j,T)-tensorproduct} allows to single out the following commuting Toeplitz operators with special invariant symbols.

\begin{theorem}\label{thm:quasisymbols_commToeplitz}
    Let $\kb \in \Z_+^m$ be a fixed partition of $n$ with $m \geq 2$ and let $j_1, j_2 \in \{1, \dots, m\}$ be two different indices. Let $a, b \in L^\infty(\B^n)$ be given. If $a$ and $b$ are quasi-radial quasi-homogeneous symbols for the subgroups $\U(\kb,j_1,\T)$ and $\U(\kb,j_2,\T)$, respectively, then we have
    \[
        T^{(\lambda)}_a T^{(\lambda)}_b =
            T^{(\lambda)}_b T^{(\lambda)}_a
    \]
    for every $\lambda > -1$.
\end{theorem}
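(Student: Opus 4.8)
The plan is to reduce the asserted identity $T^{(\lambda)}_a T^{(\lambda)}_b = T^{(\lambda)}_b T^{(\lambda)}_a$ to a statement about each isotypic block $\cP_\kapb(\C^n)$ separately, and then to read the blockwise commutativity directly off the tensor product description supplied by Proposition~\ref{prop:EndU(kb,j,T)-tensorproduct}, using that the two nontrivial tensor factors occupy different slots.

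First I would observe that $a$ and $b$ are both $\T^m$-invariant, since $\T^m \subset \U(\kb,j,\T)$ for every $j$. By Corollary~\ref{cor:HinvariantToeplitz} together with Proposition~\ref{prop:intertwiningTm}, the operators $T^{(\lambda)}_a$ and $T^{(\lambda)}_b$ then both lie in $\End_{\T^m}(\cA^2_\lambda(\B^n))$ and preserve each summand $\cP_\kapb(\C^n)$ of the isotypic decomposition of Proposition~\ref{prop:isotypicTm}. Consequently $T^{(\lambda)}_a T^{(\lambda)}_b$ and $T^{(\lambda)}_b T^{(\lambda)}_a$ are both block diagonal with respect to $\cA^2_\lambda(\B^n) = \bigoplus_{\kapb \in \N^m} \cP_\kapb(\C^n)$, so it will be enough to show that $T^{(\lambda)}_a|_{\cP_\kapb(\C^n)}$ and $T^{(\lambda)}_b|_{\cP_\kapb(\C^n)}$ commute in $\End(\cP_\kapb(\C^n))$ for every $\kapb \in \N^m$.

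For the blockwise step, fix $\kapb \in \N^m$. By Corollary~\ref{cor:HinvariantToeplitz} we have $T^{(\lambda)}_a \in \End_{\U(\kb,j_1,\T)}(\cA^2_\lambda(\B^n))$ and $T^{(\lambda)}_b \in \End_{\U(\kb,j_2,\T)}(\cA^2_\lambda(\B^n))$, so Proposition~\ref{prop:EndU(kb,j,T)-tensorproduct} provides maps $M_a := M_{\lambda, T^{(\lambda)}_a}(\kapb,j_1)$ and $M_b := M_{\lambda, T^{(\lambda)}_b}(\kapb,j_2)$ for which, under the isomorphism $U_\kapb$ of Proposition~\ref{prop:Pkapb-tensorproduct},
\[
    U_\kapb^{-1} \big(T^{(\lambda)}_a|_{\cP_\kapb(\C^n)}\big) U_\kapb = A_1 \otimes \dots \otimes A_m
    \quad\text{and}\quad
    U_\kapb^{-1} \big(T^{(\lambda)}_b|_{\cP_\kapb(\C^n)}\big) U_\kapb = B_1 \otimes \dots \otimes B_m,
\]
where $A_{j_1} = M_a$ and $A_i = I_{(i)}$ for $i \neq j_1$, while $B_{j_2} = M_b$ and $B_i = I_{(i)}$ for $i \neq j_2$. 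I would then invoke the elementary identity $(A_1 \otimes \dots \otimes A_m)(B_1 \otimes \dots \otimes B_m) = (A_1 B_1) \otimes \dots \otimes (A_m B_m)$ and note that $A_i B_i = B_i A_i$ in every slot $i$, because $j_1 \neq j_2$ forces at least one of $A_i$, $B_i$ to be the identity for each $i$. Hence the two tensor operators commute, and therefore so do $T^{(\lambda)}_a|_{\cP_\kapb(\C^n)}$ and $T^{(\lambda)}_b|_{\cP_\kapb(\C^n)}$; combined with the block diagonal reduction above, this gives the theorem.

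I do not expect a genuine obstacle here: essentially all the work is already packaged into Proposition~\ref{prop:EndU(kb,j,T)-tensorproduct}, whose proof (Schur's Lemma applied to the decomposition into isotypic components for a smaller product of unitary groups) is the nontrivial input. The only points requiring care are purely bookkeeping: checking that the conjugation by $U_\kapb$ is applied consistently, so that the two tensor representations are written with respect to the \emph{same} factorization of $\cP_\kapb(\C^n)$, and confirming the slotwise commutation of tensor product operators — both of which are immediate once the cited results are in place.
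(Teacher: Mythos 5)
Your proposal is correct and follows essentially the same route as the paper's own proof: reduce to the block diagonal decomposition over $\cP_\kapb(\C^n)$, apply Proposition~\ref{prop:EndU(kb,j,T)-tensorproduct} to write both restrictions (after conjugation by $U_\kapb$) as tensor products with the unique nontrivial factor in slot $j_1$ resp.\ $j_2$, and conclude commutativity from $j_1 \neq j_2$. The only difference is cosmetic — you spell out the slotwise tensor multiplication identity and the intermediate passage through $\End_{\T^m}$, which the paper leaves implicit.
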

\begin{proof}
    By Corollary~\ref{cor:HinvariantToeplitz} we have for every $\lambda > -1$
    \[
        T^{(\lambda)}_a \in
            \End_{\U(\kb,j_1,\T)}(\cA^2_\lambda(\B^n)), \quad
        T^{(\lambda)}_b \in
            \End_{\U(\kb,j_2,\T)}(\cA^2_\lambda(\B^n)).
    \]
    It follows from Proposition~\ref{prop:EndU(kb,j,T)-tensorproduct} that both operators preserve the subspaces $\cP_{\kapb}(\C^n)$, for every $\kapb \in \N^m$, whose Hilbert direct sum is the whole Bergman space $\cA^2_\lambda(\B^n)$. Furthermore, in the notation of Proposition~\ref{prop:EndU(kb,j,T)-tensorproduct} we also have
    \begin{align*}
        T^{(\lambda)}_a&|_{\cP_\kapb(\C^n)} = \\
            =&\;
            U_\kapb \circ (I_{(1)}\otimes \dots \otimes I_{(j_1-1)} \otimes M_{\lambda, T^{(\lambda)}_a}(\kapb, j_1) \otimes I_{(j_1+1)} \otimes \dots \otimes I_{(m)}) \circ U_\kapb^{-1} \\
        T^{(\lambda)}_b&|_{\cP_\kapb(\C^n)} = \\
            =&\;
            U_\kapb \circ (I_{(1)}\otimes \dots \otimes I_{(j_2-1)} \otimes M_{\lambda, T^{(\lambda)}_b}(\kapb, j_2) \otimes I_{(j_2+1)} \otimes \dots \otimes I_{(m)}) \circ U_\kapb^{-1}.
    \end{align*}
    The result now follows since $j_1 \not= j_2$.
\end{proof}

The previous result allows to obtain commutative Banach algebras generated by Toeplitz operators with quasi-radial quasi-homogeneous symbols properly chosen.

\begin{corollary}\label{cor:quasi_commBanach}
    Let $\kb \in \Z_+^m$ be a fixed partition of $n$ with $m \geq 2$. For every $j = 1, \dots, m$, let $a_j$ be a $(\kb,j)$-quasi-radial quasi-homogeneous essentially bounded symbol. Let us consider the set of symbols
    \[
        \cS = L^\infty(\B^n)^{\U(\kb)} \cup \{ a_j \mid j =1, \dots, m\}.
    \]
    Then, for every $\lambda > -1$, the Banach algebra $\cT^{(\lambda)}(\cS)$ generated by the Toeplitz operators with symbols in $\cS$ is commutative.
\end{corollary}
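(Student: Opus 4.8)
The plan is to reduce the commutativity of $\cT^{(\lambda)}(\cS)$ to the commutativity of generators, since commutativity of an algebra generated by a set follows once all pairs of generators commute. The generators of $\cT^{(\lambda)}(\cS)$ come in two flavours: Toeplitz operators $T^{(\lambda)}_c$ with $c \in L^\infty(\B^n)^{\U(\kb)}$ (the $\kb$-quasi-radial symbols), and the finitely many operators $T^{(\lambda)}_{a_j}$ with $a_j$ a $(\kb,j)$-quasi-radial quasi-homogeneous symbol. So I would check three cases of pairwise commutation.

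First, two operators $T^{(\lambda)}_{c}$ and $T^{(\lambda)}_{c'}$ with $c, c' \in L^\infty(\B^n)^{\U(\kb)}$ commute because the $C^*$-algebra $\cT^{(\lambda)}(L^\infty(\B^n)^{\U(\kb)})$ is commutative (this is the quasi-radial commutativity result of \cite{VasQuasiRadial}, also recovered here via Proposition~\ref{prop:intertwiningU(kb)}, where every such operator is diagonal in the isotypic decomposition $\bigoplus_\kapb \cP_\kapb(\C^n)$). Second, $T^{(\lambda)}_{a_{j_1}}$ and $T^{(\lambda)}_{a_{j_2}}$ for $j_1 \neq j_2$ commute by Theorem~\ref{thm:quasisymbols_commToeplitz}. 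Third, and this is the only slightly new point, I must show that each $T^{(\lambda)}_{a_j}$ commutes with every $T^{(\lambda)}_{c}$, $c \in L^\infty(\B^n)^{\U(\kb)}$. For this, note $\U(\kb,j,\T) \subset \U(\kb)$, so by Corollary~\ref{cor:HinvariantToeplitz} the operator $T^{(\lambda)}_{a_j}$ lies in $\End_{\U(\kb,j,\T)}(\cA^2_\lambda(\B^n))$ and $T^{(\lambda)}_{c}$ lies in $\End_{\U(\kb)}(\cA^2_\lambda(\B^n))$. By Proposition~\ref{prop:intertwiningU(kb)}, $T^{(\lambda)}_{c} = \bigoplus_\kapb \gamma_{\lambda,c}(\kapb) I_\kapb$ acts on each $\cP_\kapb(\C^n)$ as a scalar, and a scalar operator commutes with the restriction $T^{(\lambda)}_{a_j}|_{\cP_\kapb(\C^n)}$ (which, by Proposition~\ref{prop:EndU(kb,j,T)-tensorproduct}, preserves $\cP_\kapb(\C^n)$). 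Since both operators are block diagonal with respect to the decomposition $\cA^2_\lambda(\B^n) = \bigoplus_\kapb \cP_\kapb(\C^n)$ and commute on each block, they commute globally.

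Having verified all pairs of generators commute, I would conclude: the set of all operators in $\cT^{(\lambda)}(\cS)$ that commute with a fixed generator is a closed subalgebra containing all generators, hence equal to $\cT^{(\lambda)}(\cS)$; running over all generators shows $\cT^{(\lambda)}(\cS)$ is commutative. I do not expect a genuine obstacle here — the content is entirely packaged in the earlier results, and the proof is a short bookkeeping argument combining Proposition~\ref{prop:intertwiningU(kb)}, Proposition~\ref{prop:EndU(kb,j,T)-tensorproduct}, Theorem~\ref{thm:quasisymbols_commToeplitz}, and the known commutativity of $\cT^{(\lambda)}(L^\infty(\B^n)^{\U(\kb)})$. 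The one place to be slightly careful is the third case: one should make explicit that $\kb$-quasi-radial operators act as \emph{scalars} on each $\cP_\kapb(\C^n)$ (not merely block-diagonally), since that is exactly what makes them commute with the possibly non-scalar blocks of $T^{(\lambda)}_{a_j}$.
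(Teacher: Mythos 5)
Your proof is correct and follows essentially the same route as the paper: reduce to pairwise commutation of generators, handle the two $(\kb,j)$-quasi-homogeneous factors by Theorem~\ref{thm:quasisymbols_commToeplitz}, the two quasi-radial factors by the known commutativity, and the mixed case via the block-diagonal/scalar structure of the isotypic decomposition. The only cosmetic difference is that the paper dispatches the mixed case by citing Corollary~\ref{cor:centerEndTm} (that $\End_{\U(\kb)}$ is the center of $\End_{\T^m}$), whereas you inline exactly the argument that proves that corollary.
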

\begin{proof}
    Let us fix $\lambda > -1$. Corollary~\ref{cor:HinvariantToeplitz} yields
    \begin{align*}
        T^{(\lambda)}_a \in
            \End_{\U(\kb)}(\cA^2_\lambda(\B^n)),
            & \text{ for all } a \in L^\infty(\B^n)^{\U(\kb)} \\
        T^{(\lambda)}_{a_j} \in
            \End_{\T^m}(\cA^2_\lambda(\B^n)),
            & \text{ for all } j = 1, \dots, m,
    \end{align*}
    which, using Corollary~\ref{cor:centerEndTm}, implies that
    \[
        [T^{(\lambda)}_a,T^{(\lambda)}_{a_j}] = 0
    \]
    for every $a \in L^\infty(\B^n)^{\U(\kb)}$ and $j = 1, \dots, m$. It is also well known, as observed before, that
    \[
        [T^{(\lambda)}_a,T^{(\lambda)}_b] = 0
    \]
    for every $a,b \in L^\infty(\B^n)^{\U(\kb)}$.
    On the other hand, Theorem~\ref{thm:quasisymbols_commToeplitz} shows that
    \[
        [T^{(\lambda)}_{a_{j_1}}, T^{(\lambda)}_{a_{j_2}}] = 0
    \]
    for every $j_1, j_2 = 1, \dots, m$. And so the result follows.
\end{proof}

\begin{remark}\label{rmk:BanachnotCstar}
    Note that the Banach algebras described in Corollary~\ref{cor:quasi_commBanach} are not necessarily $C^*$-algebras. The main reason is that the operator $T^{(\lambda)}_{a_j}$ is in general not normal unless, for example, $a_j$ is real-valued.
\end{remark}

\section{Toeplitz operators with quasi-homogeneous symbols acting on monomials}\label{sec:acting_on_monomials}
We now consider Toeplitz operators with quasi-radial quasi-homogeneous symbols as given in Definition~\ref{def:kbj-quasi} and determine their actions on the monomial basic elements. We provide a complete description of the blocks in the block diagonal decomposition obtained from Proposition~\ref{prop:EndU(kb,j,T)-tensorproduct}. Such description is given in terms of integral formulas involving the corresponding symbols.

We start by introducing some additional notation on the use of our coordinates. Following Proposition~\ref{prop:EndU(kb,j,T)-tensorproduct}, besides our partition $\kb \in \N^m$, with $m \geq 2$, we fix an integer $j \in \{1, \dots, m\}$. For every $z \in \C^n$, we will denote
\[
    z_{\widehat{(j)}} =
    (z_{(1)}, \dots, z_{(j-1)}, z_{(j+1)}, \dots, z_{(m)}) \in \C^{n-k_j}.
\]
In particular, for every $\alpha \in \N^n$ we write
\[
    \alpha_{\widehat{(j)}} =
    (\alpha_{(1)}, \dots, \alpha_{(j-1)}, \alpha_{(j+1)}, \dots, \alpha_{(m)}) \in \N^{n-k_j}.
\]
With this notation we can write
\[
    z^\alpha = {z_{\widehat{(j)}}}^{\alpha_{\widehat{(j)}}} {z_{(j)}}^{\alpha_{(j)}}
\]
for every $z \in \C^n$ and $\alpha \in \N^n$.

On the other hand, using the partition $\kb$ of $n$ we define the partition $\kb_{\widehat{j}} = (k_1, \dots, k_{j-1}, k_{j+1}, \dots, k_m) \in \Z_+^{m-1}$ of $n-k_j$. This yields corresponding decompositions
\[
    w =
    (w_{(1)}, \dots, w_{(j-1)}, w_{(j+1)}, \dots, w_{(m)}),
\]
for $w \in \C^{n-k_j}$, and
\[
    \beta =
    (\beta_{(1)}, \dots, \beta_{(j-1)}, \beta_{(j+1)}, \dots, \beta_{(m)}),
\]
for $\beta \in \N^{n-k_j}$. Note that in both cases we are enumerating so that we omit the component corresponding to $(j)$. This notation will be convenient for our formulas below. In fact, we relate the decompositions obtained from $\kb$ and $\kb_{\widehat{j}}$ by fixing the embedding
\begin{align*}
    \C^{n-k_j} = \prod_{l\not=j} \C^{k_l} &\hookrightarrow \C^n  \\
    (w_{(1)}, \dots, w_{(j-1)}, w_{(j+1)}, \dots, w_{(m)}) &\mapsto (w_{(1)}, \dots, w_{(j-1)}, 0, w_{(j+1)}, \dots, w_{(m)}).
\end{align*}

With the notation just introduced, it follows that we have for every $\kapb \in \N^m$ the Hilbert direct sum decomposition
\begin{equation}\label{eq:cPkapb_sumcPkappaj}
    \cP_{\kapb}(\C^n) =
        \bigoplus_{\substack{\alpha \in \N^{n-k_j} \\ |\alpha_{(l)}| = \kappa_l, l \not= j}}
        {z_{\widehat{(j)}}}^\alpha \cP_{\kappa_j}(\C^{k_j}).
\end{equation}
Note that in this expression, the subspace ${z_{\widehat{(j)}}}^{\alpha} \cP_{\kappa_j}(\C^{k_j})$ is the linear span of the monomials of the form ${z_{\widehat{(j)}}}^{\alpha} {z_{(j)}}^{\alpha'}$ with $\alpha \in \N^{n-k_j}$ fixed and $\alpha'$ running through all the elements of $\N^{k_j}$ with length $\kappa_j$. Also, there is a natural isomorphism
\begin{align*}
    {z_{\widehat{(j)}}}^\alpha \cP_{\kappa_j}(\C^{k_j}) &\rightarrow
    {z_{\widehat{(j)}}}^\beta \cP_{\kappa_j}(\C^{k_j}) \\
    {z_{\widehat{(j)}}}^\alpha p(z_{(j)}) &\mapsto  {z_{\widehat{(j)}}}^\beta p(z_{(j)})
\end{align*}
for every $\alpha, \beta \in \N^{n-k_j}$ such that $|\alpha_{(l)}| = |\beta_{(l)}| = \kappa_l$, for $l \not= j$, and $p \in \cP_{\kappa_j}(\C^{k_j})$. We will use this isomorphism in the rest of this work. In particular, all the terms in \eqref{eq:cPkapb_sumcPkappaj} are naturally isomorphic as vector spaces.

Recall that, by Proposition~\ref{prop:EndU(kb,j,T)-tensorproduct}, any operator in $\End_{\U(\kb,j,\T)}(\cA^2_\lambda(\B^n))$ leaves invariant the subspace $\cP_\kapb(\C^n)$, for every $\kapb \in \N^m$. We now obtain some additional properties for the operators that intertwine the action of~$\U(\kb,j,\T)$.

\begin{proposition}\label{prop:EndU(kb,j,T)_matrixcoef}
    Let $\kb \in \Z_+^m$ be a fixed partition of $n$ with $m \geq 2$ and let $j \in \{ 1, \dots, m\}$ be given. If $T \in \End_{\U(\kb,j,\T)}(\cA^2_\lambda(\B^n))$, then, for every $\kapb \in \N^m$, the operator $T$ preserves the decomposition \eqref{eq:cPkapb_sumcPkappaj}. Furthermore, the corresponding induced linear maps acting on the terms of \eqref{eq:cPkapb_sumcPkappaj} do not depend on $\alpha$. More precisely, for every $\kapb \in \N^m$ the following properties hold.
    \begin{enumerate}
        \item For $\alpha, \beta \in \N^n$ such that $|\alpha_{(l)}| = |\beta_{(l)}| = \kappa_l$, for all $l = 1, \dots, m$, and satisfying $\alpha_{\widehat{(j)}} \not= \beta_{\widehat{(j)}}$ we have
            \[
                \langle T(z^\alpha), z^\beta
                \rangle_\lambda = 0,
            \]
            and so $T$ preserves the terms of the decomposition \eqref{eq:cPkapb_sumcPkappaj}.
        \item For $\alpha, \beta \in \N^{n-k_j}$ such that $|\alpha_{(l)}| = |\beta_{(l)}| = \kappa_l$, for all $l \not= j$, the diagram
            \[
            \xymatrix{
                {z_{\widehat{(j)}}}^{\alpha} \cP_{\kappa_j}(\C^{k_j}) \ar[r]^T \ar[d] &
                {z_{\widehat{(j)}}}^{\alpha} \cP_{\kappa_j}(\C^{k_j}) \ar[d] \\
                {z_{\widehat{(j)}}}^{\beta} \cP_{\kappa_j}(\C^{k_j}) \ar[r]^T &
                {z_{\widehat{(j)}}}^{\beta} \cP_{\kappa_j}(\C^{k_j})
            }
            \]
            is commutative, where the vertical arrows are given by the natural isomorphism described above.
    \end{enumerate}
\end{proposition}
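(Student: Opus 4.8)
The plan is to exploit the fact that the operator $T\in\End_{\U(\kb,j,\T)}(\cA^2_\lambda(\B^n))$ is, in particular, equivariant for the subgroup $\U(\kb_{\widehat{j}})\subset\U(\kb,j,\T)$ acting on the $\widehat{(j)}$-coordinates, and invoke the decomposition of $\cP_\kapb(\C^n)$ into $\U(\kb_{\widehat{j}})$-isotypic pieces, mirroring the proof of Proposition~\ref{prop:EndU(kb,j,T)-tensorproduct}. Concretely, I would first rewrite the decomposition \eqref{eq:cPkapb_sumcPkappaj} in tensor-product form: under the identification $U_\kapb$ of Proposition~\ref{prop:Pkapb-tensorproduct}, the subspace ${z_{\widehat{(j)}}}^\alpha\cP_{\kappa_j}(\C^{k_j})$ corresponds to $(\bigotimes_{l\neq j}\C z_{(l)}^{\alpha_{(l)}})\otimes\cP_{\kappa_j}(\C^{k_j})$, a line times $\cP_{\kappa_j}(\C^{k_j})$. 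Grouping the lines $\C z_{(l)}^{\alpha_{(l)}}$ together, the decomposition \eqref{eq:cPkapb_sumcPkappaj} is exactly the isotypic decomposition of $\bigotimes_{l\neq j}\cP_{\kappa_l}(\C^{k_l})$ for the abelian group $\prod_{l\neq j}\T I_{(l)}$ (the $\T^{m-1}$ sitting inside $\U(\kb_{\widehat{j}})$), tensored with the fixed factor $\cP_{\kappa_j}(\C^{k_j})$; each $\C z_{(l)}^{\alpha_{(l)}}$ carries a distinct character, so the lines $\C z_{\widehat{(j)}}^\alpha$ are pairwise inequivalent as $\prod_{l\neq j}\T$-modules.

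For part (1), the key observation is that $T$ commutes with $\pi_\lambda|_{\prod_{l\neq j}\T I_{(l)}}$, since $\prod_{l\neq j}\T I_{(l)}\subset\T^m\subset\U(\kb,j,\T)$. If $\alpha_{\widehat{(j)}}\neq\beta_{\widehat{(j)}}$ then there is some $l\neq j$ with $\alpha_{(l)}\neq\beta_{(l)}$, hence $|\alpha_{(l)}|$ and $|\beta_{(l)}|$ are associated to different characters of that $\T$-factor (or at least $z^\alpha$ and $z^\beta$ transform differently under $\prod_{l\ne j}\T I_{(l)}$). A standard argument — pairing $\langle T\pi_\lambda(t)z^\alpha,z^\beta\rangle_\lambda=\langle\pi_\lambda(t)Tz^\alpha,z^\beta\rangle_\lambda$ and using that $z^\beta$ is an eigenvector of $\pi_\lambda(t)^*$ — forces $\langle T(z^\alpha),z^\beta\rangle_\lambda=0$. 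This is precisely the orthogonality of distinct isotypic components and shows $T$ preserves each term ${z_{\widehat{(j)}}}^\alpha\cP_{\kappa_j}(\C^{k_j})$.

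For part (2), I would apply Schur's Lemma to the restriction of $T$ to the $\U(\kb_{\widehat{j}})$-isotypic structure, exactly as in the proof of Proposition~\ref{prop:EndU(kb,j,T)-tensorproduct}. The point is that for fixed $\kapb$ and fixed $\kappa_j$, the terms ${z_{\widehat{(j)}}}^\alpha\cP_{\kappa_j}(\C^{k_j})$ ranging over admissible $\alpha$ are $\U(\kb_{\widehat{j}})$-submodules all isomorphic to the single irreducible $\U(\kb_{\widehat{j}})$-module $\bigotimes_{l\neq j}\cP_{\kappa_l}(\C^{k_l})$ (again via Proposition~\ref{prop:Pkapb-tensorproduct}), with the natural isomorphism of the statement being exactly the chosen $\U(\kb_{\widehat{j}})$-equivariant identification between them. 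Since $T$ is $\U(\kb_{\widehat{j}})$-equivariant and already known by part (1) to send the $\alpha$-term to itself, Schur's Lemma says $T$ acts on ${z_{\widehat{(j)}}}^\alpha\cP_{\kappa_j}(\C^{k_j})$ through scalars on the $\U(\kb_{\widehat{j}})$-irreducible factor, i.e. through an endomorphism living only on the $\cP_{\kappa_j}(\C^{k_j})$-leg; because the identifications used for different $\alpha$ are $\U(\kb_{\widehat{j}})$-equivariant and hence compatible, this leg-endomorphism is literally the same for every $\alpha$, which is the commutativity of the displayed square. The main obstacle is bookkeeping rather than mathematical: one must be careful that the "natural isomorphism" ${z_{\widehat{(j)}}}^\alpha p(z_{(j)})\mapsto{z_{\widehat{(j)}}}^\beta p(z_{(j)})$ genuinely coincides with the $\U(\kb_{\widehat{j}})$-module isomorphism obtained by transport through $U_\kapb$ and the identity on $\cP_{\kappa_j}(\C^{k_j})$, so that Schur's Lemma yields a single common map $M_{\lambda,T}(\kapb,j)$; once that identification is pinned down, everything follows from irreducibility and Schur. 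I would present this carefully but briefly, assuming $j=m$ without loss of generality exactly as in Proposition~\ref{prop:EndU(kb,j,T)-tensorproduct}.
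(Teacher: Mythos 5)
Your overall strategy --- reduce to the tensor picture of Proposition~\ref{prop:EndU(kb,j,T)-tensorproduct} and argue by equivariance --- is the right instinct, but both of your arguments as written contain the same type of mistake: you apply a group that is not fine enough (respectively, too coarse) to see the decomposition \eqref{eq:cPkapb_sumcPkappaj}.

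For part (1), you pair with the torus $\prod_{l\neq j}\T I_{(l)}$ (the scalar tori, a piece of the center $\T^m$). But under $t\cdot z^\alpha = \prod_{l\neq j} t_l^{-|\alpha_{(l)}|}\,z^\alpha = \prod_{l\neq j} t_l^{-\kappa_l}\,z^\alpha$, every monomial in $\cP_\kapb(\C^n)$ transforms by the \emph{same} character, so this torus cannot separate monomials with different $\alpha_{\widehat{(j)}}$ but the same $\kapb$. Your parenthetical claim that ``the lines $\C z_{\widehat{(j)}}^\alpha$ are pairwise inequivalent as $\prod_{l\neq j}\T$-modules'' is therefore false. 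To run the character argument you would need the full diagonal torus $\prod_{l\neq j}\T^{k_l}\subset \U(\kb,j,\T)$, under which $z^\alpha$ transforms by the character $t\mapsto t^{-\alpha_{\widehat{(j)}}}$ and the terms of \eqref{eq:cPkapb_sumcPkappaj} are genuinely the distinct isotypic pieces.

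For part (2), you claim that each term ${z_{\widehat{(j)}}}^\alpha\cP_{\kappa_j}(\C^{k_j})$ is a $\U(\kb_{\widehat{j}})$-submodule isomorphic to $\bigotimes_{l\neq j}\cP_{\kappa_l}(\C^{k_l})$, and then invoke Schur. Both halves of this are off: the group $\U(\kb_{\widehat{j}})$ acts on the $\widehat{(j)}$-coordinates, so it does \emph{not} fix the monomial ${z_{\widehat{(j)}}}^\alpha$ (unless $k_l=1$ for all $l\neq j$), and the two spaces you identify have different dimensions ($\dim\cP_{\kappa_j}(\C^{k_j})$ versus $\prod_{l\neq j}\dim\cP_{\kappa_l}(\C^{k_l})$). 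The decomposition that \emph{is} a sum of copies of the irreducible $\U(\kb_{\widehat{j}})$-module $\bigotimes_{l\neq j}\cP_{\kappa_l}(\C^{k_l})$ is the complementary one used in the proof of Proposition~\ref{prop:EndU(kb,j,T)-tensorproduct}, where one fixes a basis element of $\cP_{\kappa_j}(\C^{k_j})$ and lets the $\widehat{(j)}$-factors vary. Running Schur there produces the tensor form $U_\kapb\circ(I\otimes\dots\otimes M\otimes\dots\otimes I)\circ U_\kapb^{-1}$, and \emph{from that form} one reads off both (1) (the operator acts as the identity on each line $\C z_{(l)}^{\alpha_{(l)}}$, $l\neq j$) and (2) (it acts on every fiber $\cP_{\kappa_j}(\C^{k_j})$ by the same $M$). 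This is precisely what the paper does: it cites Proposition~\ref{prop:EndU(kb,j,T)-tensorproduct} and observes the conclusions are immediate from the tensor expression, rather than re-running Schur with the wrong subgroup against the decomposition \eqref{eq:cPkapb_sumcPkappaj}.
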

\begin{proof}
    Let us fix $\kapb \in \N^m$. Note that Proposition~\ref{prop:EndU(kb,j,T)-tensorproduct} implies that the operator $T$ preserves~$\cP_\kapb(\C^n)$.

    We observe that, with respect to the isomorphism $U_\kapb$ from Proposition~\ref{prop:Pkapb-tensorproduct}, the direct sum decomposition from \eqref{eq:cPkapb_sumcPkappaj} corresponds to the decomposition
    \begin{multline*}
        \bigotimes_{l=1}^m \cP_{\kappa_l}(\C^{k_l}) = \\
        = \bigoplus_{\substack{\alpha \in \N^{n-k_j} \\ |\alpha_{(l)}| = \kappa_l \\ l \not= j}}
        {z_{(1)}}^{\alpha_{(1)}} \otimes \dots \otimes {z_{(j-1)}}^{\alpha_{(j-1)}} \otimes \cP_{\kappa_j}(\C^{k_j}) \otimes {z_{(j+1)}}^{\alpha_{(j+1)}} \otimes \dots \otimes {z_{(m)}}^{\alpha_{(m)}}.
    \end{multline*}
    Hence, Proposition~\ref{prop:EndU(kb,j,T)-tensorproduct} implies that $T|_{\cP_\kapb(\C^n)}$ preserves the decomposition \eqref{eq:cPkapb_sumcPkappaj}. This proves (1) by the orthogonality of the monomial basis. Furthermore, the expression for $T|_{\cP_\kapb(\C^n)}$ in Proposition~\ref{prop:EndU(kb,j,T)-tensorproduct} and the correspondence of the last direct sum with \eqref{eq:cPkapb_sumcPkappaj} also prove that the diagram from (2) is indeed commutative.
\end{proof}

\begin{remark}\label{rmk:Tblockdiagonalconst}
    Let us assume the notation from Proposition~\ref{prop:EndU(kb,j,T)_matrixcoef} and its proof, and let us choose $T \in \End_{\U(\kb,j,\T)}(\cA^2_\lambda(\B^n))$. Conclusions (1) and (2) of this proposition can be explained pictorially as follows. For every $\kapb \in \N^m$, there is a linear transformation $T_{\kapb,j} \in \End(\cP_{\kappa_j}(\C^{k_j}))$ such that $T|_{\cP_\kapb(\C^n)}$ can be written as
    \begin{equation}\label{eq:Tconstantmatrix}
    \begin{array}{ccccc}
            && {z_{\widehat{(j)}}}^{\alpha_1} \cP_{\kappa_j}(\C^{k_j}) & \cdots & {z_{\widehat{(j)}}}^{\alpha_N} \cP_{\kappa_j}(\C^{k_j}) \\
            &{z_{\widehat{(j)}}}^{\alpha_1} \cP_{\kappa_j}(\C^{k_j}) & T_{\kapb,j}& \cdots & 0 \\
            T|_{\cP_\kapb(\C^n)} :& \vdots &\vdots & \ddots & \vdots \\
            &{z_{\widehat{(j)}}}^{\alpha_N} \cP_{\kappa_j}(\C^{k_j}) & 0 & \cdots & T_{\kapb,j}
    \end{array}
    \end{equation}
    where $\alpha_1, \dots, \alpha_N$ is some enumeration of the multi-indices $\alpha \in \N^{n-k_j}$ such that $|\alpha_{(l)}| = \kappa_l$ for all $l \not= j$. We have also considered the natural isomorphism between ${z_{\widehat{(j)}}}^{\alpha} \cP_{\kappa_j}(\C^{k_j})$ and $\cP_{\kappa_j}(\C^{k_j})$, for every such $\alpha$. Hence, the block diagonal structure of $T$ coming from the fact that it preserves the decomposition
    \[
        \cA^2_\lambda(\B^n) =
            \bigoplus_{\kapb \in \N^m}
                \cP_\kapb(\C^n)
    \]
    is even finer, since the block corresponding to each restriction $T|_{\cP_\kapb(\C^n)}$ is itself a block diagonal operator constant along the diagonal via the linear transformation $T_{\kapb,j} \in \End(\cP_{\kappa_j}(\C^{k_j}))$.
\end{remark}

As noted in Remark~\ref{rmk:Tblockdiagonalconst}, many of the inner products $\langle T(z^\alpha), z^\beta \rangle_\lambda$ vanish when $T \in \End_{\U(\kb,j,\T)}(\cA^2_\lambda(\B^n))$ is restricted to the subspaces $\cP_\kapb(\C^n)$. The next results compute the rest of the inner products when $T = T_a^{(\lambda)}$ for $a \in L^\infty(\B^n)^{\U(\kb,j,\T)}$ in terms of the symbol $a$. We also provide a description of the matrix coefficients for the linear transformations $(T_a^{(\lambda)})_{\kapb,j}$, in the notation of Remark~\ref{rmk:Tblockdiagonalconst}. In what follows, we will denote by $\dif \xi_{(l)}$ the measure on $\mbS^{k_l}$ obtained from the Riemannian structure inherited from $\C^{k_l}$.

\begin{theorem}\label{thm:Tkappaj_withf_forToeplitz}
    Let $\kb \in \Z_+^m$ be a fixed partition of $n$ with $m \geq 2$ and let $j \in \{1, \dots, m\}$ be given. If $a \in L^\infty(\B^n)$ is a $(\kb,j)$-quasi-radial quasi-homogeneous symbol and $f$ is the corresponding function considered in Lemma~\ref{lem:kbj-quasi}(1), then for every $\lambda > -1$ and $\kapb \in \N^m$ we have
    \begin{multline*}
        \langle T_a^{(\lambda)}(z^\alpha), z^\beta \rangle_\lambda =
        \frac{2^{m-1}\Gamma(n + \lambda + 1) \alpha_{\widehat{(j)}}!}{\pi^{k_j} \Gamma(\lambda + 1) \prod_{l\not=j}(k_l + \kappa_l - 1)!} \times \\
        \times
        \int_{\tau(\B^m)\times \mbS^{k_j}}
            f(r,\xi_{(j)}) {\xi_{(j)}}^{\alpha_{(j)}} {\overline{\xi}_{(j)}}^{\beta_{(j)}} (1 - |r|^2)^\lambda \prod_{l=1}^m r_l^{2k_l+2\kappa_l-1} \dif r_l \dif \xi_{(j)},
    \end{multline*}
    for every $\alpha, \beta \in \N^n$ such that $|\alpha_{(l)}| = |\beta_{(l)}| = \kappa_l$, for all $l = 1, \dots, m$, that satisfy $\alpha_{\widehat{(j)}} = \beta_{\widehat{(j)}}$.
\end{theorem}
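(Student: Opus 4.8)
The plan is to compute the inner product $\langle T_a^{(\lambda)}(z^\alpha), z^\beta\rangle_\lambda$ directly from the definition of the Toeplitz operator as $\langle a z^\alpha, z^\beta\rangle_\lambda$, using the reproducing property to dispose of the Bergman projection. So the starting point is
\[
\langle T_a^{(\lambda)}(z^\alpha), z^\beta\rangle_\lambda
= c_\lambda \int_{\B^n} a(z)\, z^\alpha\, \overline{z^\beta}\, (1-|z|^2)^\lambda \dif v(z).
\]
Next I would substitute the polar-type coordinates introduced in Section~\ref{sec:quasi}, namely $z_{(l)} = r_l \xi_{(l)}$ with $r_l = |z_{(l)}| \in [0,1)$ and $\xi_{(l)} \in \mbS^{k_l}$, for each block $l = 1,\dots,m$. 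The Lebesgue measure on $\B^n$ factors accordingly; on each $\C^{k_l}$ one has $\dif v(z_{(l)}) = r_l^{2k_l - 1}\dif r_l\, \dif\xi_{(l)}$ (up to the normalization of the sphere measure), and the constraint is $|r|^2 = r_1^2 + \dots + r_m^2 < 1$, i.e.\ $r \in \tau(\B^m)$. Writing $z^\alpha = \prod_l r_l^{\kappa_l}\xi_{(l)}^{\alpha_{(l)}}$ and likewise for $\overline{z^\beta}$, and using that $a(z) = f(r, \xi_{(j)})$ by Lemma~\ref{lem:kbj-quasi}(1), the integral separates into a factor over $\tau(\B^m)\times\mbS^{k_j}$ carrying $f$ and the $j$-th sphere variables, times a product over $l\neq j$ of the purely angular integrals $\int_{\mbS^{k_l}} \xi_{(l)}^{\alpha_{(l)}}\overline{\xi_{(l)}}^{\beta_{(l)}}\dif\xi_{(l)}$.

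The crucial computational ingredient is the standard evaluation of these angular integrals over the unit sphere in $\C^{k_l}$: $\int_{\mbS^{k_l}} \xi^{\gamma}\overline{\xi}^{\delta}\dif\xi$ vanishes unless $\gamma = \delta$, and equals a known constant proportional to $\gamma!/(k_l + |\gamma| - 1)!$ when $\gamma = \delta$. This is exactly why the hypothesis $\alpha_{\widehat{(j)}} = \beta_{\widehat{(j)}}$ appears: for $l\neq j$ we need $\alpha_{(l)} = \beta_{(l)}$ for a nonzero contribution, which is the content of Proposition~\ref{prop:EndU(kb,j,T)_matrixcoef}(1), and it is consistent to restrict attention to that case. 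Assembling the constant from these $m-1$ sphere integrals (each contributing $\alpha_{(l)}!/(k_l + \kappa_l - 1)!$ together with the appropriate power-of-$\pi$ and factorial normalization of the round sphere volume), together with $c_\lambda = \Gamma(n+\lambda+1)/(\pi^n\Gamma(\lambda+1))$, the radial Jacobians $r_l^{2k_l - 1}$ which combine with $r_l^{\kappa_l}\cdot r_l^{\kappa_l}$ from the monomials to give $r_l^{2k_l + 2\kappa_l - 1}$, and the factor $2^{m-1}$ coming from the change of variables $r_l \mapsto r_l^2$ in each of the radial integrals except possibly a bookkeeping subtlety with the $j$-th one (here the $j$-th radial variable is kept inside the surviving integral over $\tau(\B^m)$, and one $2$ per radial direction gives $2^m$ overall but one $\mbS^{k_j}$-sphere normalization absorbs a factor, landing on $2^{m-1}/\pi^{k_j}$ as the prefactor of the remaining integral), yields precisely the stated formula.

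The main obstacle is purely bookkeeping: tracking the exact constant through the normalization conventions for the measures $\dif\xi_{(l)}$ on $\mbS^{k_l}$ (the paper fixes these as the Riemannian volume, which has total mass $2\pi^{k_l}/(k_l-1)!$), the constant $c_\lambda$, and the substitutions $r_l \mapsto r_l^2$, so that the $\pi$-powers telescope from $\pi^{-n}$ down to $\pi^{-k_j}$ and the factorials organize as $\prod_{l\neq j}(k_l + \kappa_l - 1)!$ in the denominator against $\alpha_{\widehat{(j)}}! = \prod_{l\neq j}\alpha_{(l)}!$ in the numerator. I would carry this out by first recording the auxiliary lemma $\int_{\mbS^{k}}\xi^\gamma\overline{\xi}^\delta\dif\xi = \delta_{\gamma\delta}\,\frac{2\pi^{k}\,\gamma!}{(k + |\gamma| - 1)!}$ (a classical identity, e.g.\ via integration in $\C^k$ against a Gaussian), then plugging everything in and simplifying. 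No conceptual difficulty is expected beyond this careful constant-chasing; the vanishing for $\alpha_{\widehat{(j)}}\neq\beta_{\widehat{(j)}}$ is already guaranteed by Proposition~\ref{prop:EndU(kb,j,T)_matrixcoef}, so the theorem only needs the complementary case.
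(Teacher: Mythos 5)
Your proposal is correct and follows the same route as the paper's proof: expand $\langle T_a^{(\lambda)}z^\alpha, z^\beta\rangle_\lambda = c_\lambda\int_{\B^n}a(z)z^\alpha\overline{z^\beta}(1-|z|^2)^\lambda\,\dif v(z)$, pass to block spherical coordinates $z_{(l)}=r_l\xi_{(l)}$, factor off the angular integrals for $l\neq j$, and evaluate those with the identity $\int_{\mbS^{k}}\xi^\gamma\overline{\xi}^\delta\dif\xi = \delta_{\gamma\delta}\,2\pi^{k}\gamma!/(k + |\gamma| - 1)!$. One small correction to the narrative in your middle paragraph: the factor $2^{m-1}$ is not produced by a substitution $r_l\mapsto r_l^2$ (no such substitution occurs in the final formula, which keeps $r_l^{2k_l+2\kappa_l-1}$); it comes directly from the prefactor $2\pi^{k_l}$ in the sphere identity you correctly record at the end, giving one factor of $2$ for each of the $m-1$ angular integrals over $\mbS^{k_l}$ with $l\neq j$, and likewise the $\pi^{n-k_j}$ these produce cancels against $c_\lambda$'s $\pi^{-n}$ to leave $\pi^{-k_j}$. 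Carried out with that auxiliary lemma, the bookkeeping lands exactly on the stated constant, so the proof as planned goes through.
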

\begin{proof}
    Let us fix $\kapb$, $\alpha$, $\beta$ as in the statement. We compute
    \begin{align*}
        \langle T_a^{(\lambda)} & (z^\alpha), z^\beta \rangle_\lambda =
        \langle a z^\alpha, z^\beta \rangle_\lambda =  \\
        =&\; c_\lambda \int_{\B^n} a(z) |{z_{\widehat{(j)}}}^{\alpha_{\widehat{(j)}}}|^2 {z_{(j)}}^{\alpha_{(j)}} {\overline{z}_{(j)}}^{\beta_{(j)}} (1 - |z|^2)^\lambda \dif v(z),
    \end{align*}
    introducing spherical coordinates on each of the factors of $\C^n = \prod_{l=1}^m \C^{k_l}$ we have
    \begin{align*}
        =&\; c_\lambda \int_{\tau(\B^m) \times \prod_{l=1}^m \mbS^{k_l}} f(r, \xi_{(j)}) \prod_{l=1}^m r_l^{2\kappa_l} \prod_{l\not=j} |{\xi_{(l)}}^{\alpha_{(l)}}|^2 {\xi_{(j)}}^{\alpha_{(j)}} {\overline{\xi}_{(j)}}^{\beta_{(j)}} \times \\
        &\times (1 - |r|^2)^\lambda \prod_{l=1}^m r_l^{2k_l-1} \dif r_l \dif \xi_{(l)} \\
        =&\; \frac{\Gamma(n+\lambda+1)}{\pi^n \Gamma(\lambda+1)} \prod_{l\not=j}  \int_{\mbS^{k_l}} |{\xi_{(l)}}^{\alpha_{(l)}}|^2 \dif \xi_{(l)} \times \\
        &\times
        \int_{\tau(\B^m)\times \mbS^{k_j}}
            f(r,\xi_{(j)}) {\xi_{(j)}}^{\alpha_{(j)}} {\overline{\xi}_{(j)}}^{\beta_{(j)}} (1 - |r|^2)^\lambda \prod_{l=1}^m r_l^{2k_l+2\kappa_l-1} \dif r_l \dif \xi_{(j)},
    \end{align*}
    and the result follows by substituting the known values for the integrals over $\mbS^{k_l}$ for $l \not= j$.
\end{proof}

Recall that the canonical orthonormal monomial basis of $\cA^2_\lambda(\B^n)$ is given by
\[
    e_\alpha(z) = \sqrt{\frac{\Gamma(n+\lambda+|\alpha|+1)}{\alpha! \Gamma(n+\lambda+1)}} z^\alpha,
\]
where $\alpha \in \N^n$. The next result is a consequence of these expressions and of Theorem~\ref{thm:Tkappaj_withf_forToeplitz}.

\begin{corollary}\label{cor:Tkappaj_withf_forToeplitz}
    With the notation of Theorem~\ref{thm:Tkappaj_withf_forToeplitz} and Remark~\ref{rmk:Tblockdiagonalconst}, if $a \in L^\infty(\B^n)$ is a $(\kb,j)$-quasi-radial quasi-homogeneous symbol, then, for every $\kapb \in \N^m$, the coefficients of the linear map $(T_a^{(\lambda)})_{\kapb,j}$ are given by
    \begin{multline*}
        \langle T_a^{(\lambda)}(e_\alpha), e_\beta \rangle_\lambda =
        \frac{2^{m-1} \Gamma(n+\lambda+|\kapb|+1)}{\pi^{k_j} \Gamma(\lambda+1) \sqrt{\alpha_{(j)}! \beta_{(j)}!} \prod_{l\not=j} (k_l+\kappa_l-1)!} \times \\
        \times
        \int_{\tau(\B^m)\times \mbS^{k_j}}
            f(r,\xi_{(j)}) {\xi_{(j)}}^{\alpha_{(j)}} {\overline{\xi}_{(j)}}^{\beta_{(j)}} (1 - |r|^2)^\lambda \prod_{l=1}^m r_l^{2k_l+2\kappa_l-1} \dif r_l \dif \xi_{(j)},
    \end{multline*}
    for every $\alpha, \beta \in \N^n$ such that $|\alpha_{(l)}| = |\beta_{(l)}| = \kappa_l$, for all $l = 1, \dots, m$, that satisfy $\alpha_{\widehat{(j)}} = \beta_{\widehat{(j)}}$.
\end{corollary}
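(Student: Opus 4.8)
The plan is to derive Corollary~\ref{cor:Tkappaj_withf_forToeplitz} directly from Theorem~\ref{thm:Tkappaj_withf_forToeplitz} by rewriting everything in terms of the orthonormal basis $(e_\alpha)$ instead of the monomials $(z^\alpha)$. First I would recall from the definition of $e_\alpha$ that
\[
    e_\alpha = \sqrt{\frac{\Gamma(n+\lambda+|\alpha|+1)}{\alpha!\,\Gamma(n+\lambda+1)}}\, z^\alpha,
\]
so by bilinearity of the inner product,
\[
    \langle T_a^{(\lambda)}(e_\alpha), e_\beta\rangle_\lambda
        = \sqrt{\frac{\Gamma(n+\lambda+|\alpha|+1)}{\alpha!\,\Gamma(n+\lambda+1)}}
          \sqrt{\frac{\Gamma(n+\lambda+|\beta|+1)}{\beta!\,\Gamma(n+\lambda+1)}}\,
          \langle T_a^{(\lambda)}(z^\alpha), z^\beta\rangle_\lambda.
\]

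Next I would substitute the formula of Theorem~\ref{thm:Tkappaj_withf_forToeplitz} for $\langle T_a^{(\lambda)}(z^\alpha), z^\beta\rangle_\lambda$ and simplify the resulting prefactor. The key arithmetic observations are: since $\alpha_{\widehat{(j)}} = \beta_{\widehat{(j)}}$ and $|\alpha_{(l)}| = |\beta_{(l)}| = \kappa_l$ for all $l$, we have $|\alpha| = |\beta| = |\kapb|$, so $\Gamma(n+\lambda+|\alpha|+1) = \Gamma(n+\lambda+|\beta|+1) = \Gamma(n+\lambda+|\kapb|+1)$; moreover $\alpha! = \alpha_{\widehat{(j)}}!\,\alpha_{(j)}!$ and $\beta! = \alpha_{\widehat{(j)}}!\,\beta_{(j)}!$. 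Combining the two square roots with the factor $\alpha_{\widehat{(j)}}!$ appearing in Theorem~\ref{thm:Tkappaj_withf_forToeplitz} gives
\[
    \frac{\Gamma(n+\lambda+|\kapb|+1)}{\alpha_{\widehat{(j)}}!\,\Gamma(n+\lambda+1)\sqrt{\alpha_{(j)}!\,\beta_{(j)}!}}
    \cdot \frac{2^{m-1}\Gamma(n+\lambda+1)\,\alpha_{\widehat{(j)}}!}{\pi^{k_j}\Gamma(\lambda+1)\prod_{l\neq j}(k_l+\kappa_l-1)!},
\]
in which the $\alpha_{\widehat{(j)}}!$ and the $\Gamma(n+\lambda+1)$ factors cancel, leaving exactly the constant stated in the corollary. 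The integral factor is identical to the one in Theorem~\ref{thm:Tkappaj_withf_forToeplitz} and is unchanged by the normalization.

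Finally I would note that, by Proposition~\ref{prop:EndU(kb,j,T)_matrixcoef} and Remark~\ref{rmk:Tblockdiagonalconst}, for $T_a^{(\lambda)} \in \End_{\U(\kb,j,\T)}(\cA^2_\lambda(\B^n))$ the restriction to $\cP_\kapb(\C^n)$ is block diagonal with the single block $(T_a^{(\lambda)})_{\kapb,j}$ repeated along the diagonal, and the map $z_{\widehat{(j)}}^\alpha p(z_{(j)}) \mapsto z_{\widehat{(j)}}^\beta p(z_{(j)})$ identifies each summand ${z_{\widehat{(j)}}}^\alpha\cP_{\kappa_j}(\C^{k_j})$ with $\cP_{\kappa_j}(\C^{k_j})$; hence for any fixed admissible $\alpha_{\widehat{(j)}} = \beta_{\widehat{(j)}}$ the numbers $\langle T_a^{(\lambda)}(e_\alpha), e_\beta\rangle_\lambda$, as $\alpha_{(j)}, \beta_{(j)}$ vary over the multi-indices of length $\kappa_j$, are precisely the matrix coefficients of $(T_a^{(\lambda)})_{\kapb,j}$ in the (orthonormal) basis indexed by $\alpha_{(j)}$. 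This identifies the computed numbers as the coefficients asserted in the statement. I do not expect any real obstacle here: the content is already in Theorem~\ref{thm:Tkappaj_withf_forToeplitz}, and the only point requiring a little care is the cancellation of the combinatorial prefactor and the bookkeeping that identifies the inner products with the coefficients of the single repeated block $(T_a^{(\lambda)})_{\kapb,j}$ rather than with those of the larger operator $T_a^{(\lambda)}|_{\cP_\kapb(\C^n)}$.
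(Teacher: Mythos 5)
Your proposal is correct and follows essentially the same route as the paper, which simply observes that the corollary is a consequence of the formula for $e_\alpha$ together with Theorem~\ref{thm:Tkappaj_withf_forToeplitz}. The arithmetic you carry out (using $|\alpha|=|\beta|=|\kapb|$, $\alpha!=\alpha_{\widehat{(j)}}!\,\alpha_{(j)}!$, $\beta!=\alpha_{\widehat{(j)}}!\,\beta_{(j)}!$, and the cancellation of $\Gamma(n+\lambda+1)$ and $\alpha_{\widehat{(j)}}!$) is exactly the verification the paper leaves implicit; one tiny quibble is that the inner product is sesquilinear rather than bilinear, but since the normalization constants are real this has no effect.
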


\begin{remark}
    Note that the formula obtained in Corollary~\ref{cor:Tkappaj_withf_forToeplitz} shows indeed that, with respect to an orthonormal basis, the matrix of the restriction of $T_a^{(\lambda)}$ to ${z_{\widehat{(j)}}}^{\alpha} \cP_{\kappa_j}(\C^n)$ does not depend on $\alpha \in \N^{n-k_j}$ satisfying $|\alpha_{(l)}| = \kappa_l$, for every $l \not= j$. This is in accordance with Proposition~\ref{prop:EndU(kb,j,T)_matrixcoef}(2). It also corresponds to the expression  \eqref{eq:Tconstantmatrix} applied to $T = T^{(\lambda)}_a$.
\end{remark}

We now consider the matrix coefficients for the Toeplitz operators with symbols in $L^\infty(\B^n)^{\U(\kb,j,\T)}$ corresponding to their representation in terms of the coordinates $(s,t)$. We will denote by $\dif t_{(j)}$ the Haar measure on $\T^{k_j}$ with total mass $(2\pi)^{k_j}$ and with $\dif s_{(j)}$ the measure on $S^{k_j-1}_+$ obtained from the Riemannian structure inherited from $\R^{k_j}$. In particular, we have the measure change of coordinates
\[
    \dif \xi_{(j)} = s_{(j)}^{1_{k_j}} \dif s_{(j)} \dif t_{(j)}
\]
where we denote $1_{k_j} = (1, \dots, 1) \in \N^{k_j}$. We obtain the next results corresponding to Theorem~\ref{thm:Tkappaj_withf_forToeplitz} and Corollary~\ref{cor:Tkappaj_withf_forToeplitz}. They are obtained by a change of coordinates using the previous expression for $\dif \xi_{(j)}$.

\begin{theorem}\label{thm:Tkappaj_withg_forToeplitz}
    Let $\kb \in \Z_+^m$ be a fixed partition of $n$ with $m \geq 2$ and let $j \in \{1, \dots, m\}$ be given. If $a \in L^\infty(\B^n)$ is a $(\kb,j)$-quasi-radial quasi-homogeneous symbol and $g$ is the corresponding function considered in Lemma~\ref{lem:kbj-quasi}(2), then for every $\lambda > -1$ and $\kapb \in \N^m$ we have
    \begin{multline*}
        \langle T_a^{(\lambda)}(z^\alpha), z^\beta \rangle_\lambda =
        \frac{2^{m-1}\Gamma(n + \lambda + 1) \alpha_{\widehat{(j)}}!}{\pi^{k_j} \Gamma(\lambda + 1) \prod_{l\not=j}(k_l + \kappa_l - 1)!} \times \\
        \times
        \int_{\tau(\B^m)\times S_+^{k_j-1} \times \T^{k_j}}
            g(r,s_{(j)},t_{(j)}) {s_{(j)}}^{\alpha_{(j)} + \beta_{(j)} + 1_{k_j}}
            {t_{(j)}}^{\alpha_{(j)} - \beta_{(j)}} \\
        \times
        (1 - |r|^2)^\lambda \prod_{l=1}^m r_l^{2k_l+2\kappa_l-1} \dif r_l
        \dif s_{(j)} \dif t_{(j)},
    \end{multline*}
    for every $\alpha, \beta \in \N^n$ such that $|\alpha_{(l)}| = |\beta_{(l)}| = \kappa_l$, for all $l = 1, \dots, m$, that satisfy $\alpha_{\widehat{(j)}} = \beta_{\widehat{(j)}}$.
\end{theorem}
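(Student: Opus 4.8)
The plan is to derive Theorem~\ref{thm:Tkappaj_withg_forToeplitz} directly from Theorem~\ref{thm:Tkappaj_withf_forToeplitz} by performing the substitution $\xi_{(j)} = t_{(j)}\cdot s_{(j)}$ and using the measure decomposition $\dif \xi_{(j)} = s_{(j)}^{1_{k_j}} \dif s_{(j)} \dif t_{(j)}$ recorded just before the statement. Since the two functions $f$ and $g$ attached to the same symbol $a$ by Lemma~\ref{lem:kbj-quasi} are related by $f(r,\xi_{(j)}) = f(r, t_{(j)}\cdot s_{(j)}) = g(r, s_{(j)}, t_{(j)})$, the only change in the integrand of Theorem~\ref{thm:Tkappaj_withf_forToeplitz} will come from rewriting the monomial factor ${\xi_{(j)}}^{\alpha_{(j)}} {\overline{\xi}_{(j)}}^{\beta_{(j)}}$ in the $(s,t)$ coordinates.

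The key computational step is the identity
\[
    {\xi_{(j)}}^{\alpha_{(j)}} {\overline{\xi}_{(j)}}^{\beta_{(j)}}
        = (t_{(j)}\cdot s_{(j)})^{\alpha_{(j)}}
          \overline{(t_{(j)}\cdot s_{(j)})}^{\beta_{(j)}}
        = {t_{(j)}}^{\alpha_{(j)} - \beta_{(j)}}
          {s_{(j)}}^{\alpha_{(j)} + \beta_{(j)}},
\]
which uses that $s_{(j)}$ has nonnegative real entries so that $\overline{s_{(j)}} = s_{(j)}$, and that $|t_{(j)}|=1$ componentwise so that $\overline{t_{(j)}} = t_{(j)}^{-1}$. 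Combining this with the Jacobian factor $s_{(j)}^{1_{k_j}}$ produces the exponent $\alpha_{(j)} + \beta_{(j)} + 1_{k_j}$ on $s_{(j)}$ and the exponent $\alpha_{(j)} - \beta_{(j)}$ on $t_{(j)}$, exactly as in the statement. All the constants and the remaining factors $(1-|r|^2)^\lambda \prod_{l=1}^m r_l^{2k_l+2\kappa_l-1}$ are untouched by this coordinate change, so they carry over verbatim from Theorem~\ref{thm:Tkappaj_withf_forToeplitz}.

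I would write the proof in essentially two lines: first invoke Theorem~\ref{thm:Tkappaj_withf_forToeplitz} to express $\langle T_a^{(\lambda)}(z^\alpha), z^\beta\rangle_\lambda$ as the integral over $\tau(\B^m)\times\mbS^{k_j}$ involving $f$, then substitute $\xi_{(j)} = t_{(j)}\cdot s_{(j)}$, replace $f(r,\xi_{(j)})$ by $g(r, s_{(j)}, t_{(j)})$ via Lemma~\ref{lem:kbj-quasi}, replace $\dif\xi_{(j)}$ by $s_{(j)}^{1_{k_j}}\dif s_{(j)}\dif t_{(j)}$, and simplify the monomial factor using the displayed identity above. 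There is no real obstacle here; the only point requiring a word of care is the bookkeeping of which copy of $\T$ and $S_+^{k_j-1}$ the coordinates $t_{(j)}$ and $s_{(j)}$ live in, and confirming that the normalization of $\dif t_{(j)}$ (total mass $(2\pi)^{k_j}$) and $\dif s_{(j)}$ is precisely the one making $\dif\xi_{(j)} = s_{(j)}^{1_{k_j}}\dif s_{(j)}\dif t_{(j)}$ hold, which was fixed in the paragraph preceding the statement. Hence the constant $\frac{2^{m-1}\Gamma(n+\lambda+1)\alpha_{\widehat{(j)}}!}{\pi^{k_j}\Gamma(\lambda+1)\prod_{l\neq j}(k_l+\kappa_l-1)!}$ is identical to the one in Theorem~\ref{thm:Tkappaj_withf_forToeplitz}, and the proof is complete.
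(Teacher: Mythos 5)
Your proposal is correct and is essentially the same argument the paper uses: the paper explicitly states that this theorem is "obtained by a change of coordinates using the previous expression for $\dif \xi_{(j)}$," i.e.~substituting $\xi_{(j)} = t_{(j)}\cdot s_{(j)}$ into Theorem~\ref{thm:Tkappaj_withf_forToeplitz} with $\dif \xi_{(j)} = s_{(j)}^{1_{k_j}} \dif s_{(j)} \dif t_{(j)}$. Your identity ${\xi_{(j)}}^{\alpha_{(j)}} {\overline{\xi}_{(j)}}^{\beta_{(j)}} = {t_{(j)}}^{\alpha_{(j)} - \beta_{(j)}} {s_{(j)}}^{\alpha_{(j)} + \beta_{(j)}}$ and the observation that $f(r,\xi_{(j)}) = g(r,s_{(j)},t_{(j)})$ while the constants are untouched complete the computation exactly as intended.
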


\begin{corollary}\label{cor:Tkappaj_withg_forToeplitz}
    With the notation of Theorem~\ref{thm:Tkappaj_withf_forToeplitz} and Remark~\ref{rmk:Tblockdiagonalconst}, if $a \in L^\infty(\B^n)$ is a $(\kb,j)$-quasi-radial quasi-homogeneous symbol, then, for every $\kapb \in \N^m$, the coefficients of the linear map $(T_a^{(\lambda)})_{\kapb,j}$ are given by
    \begin{multline*}
        \langle T_a^{(\lambda)}(e_\alpha), e_\beta \rangle_\lambda =
        \frac{2^{m-1} \Gamma(n+\lambda+|\kapb|+1)}{\pi^{k_j} \Gamma(\lambda+1) \sqrt{\alpha_{(j)}! \beta_{(j)}!} \prod_{l\not=j} (k_l+\kappa_l-1)!} \times \\
        \times
        \int_{\tau(\B^m)\times S_+^{k_j-1} \times \T^{k_j}}
            g(r,s_{(j)},t_{(j)}) {s_{(j)}}^{\alpha_{(j)} + \beta_{(j)} + 1_{k_j}}
            {t_{(j)}}^{\alpha_{(j)} - \beta_{(j)}} \\
        \times
        (1 - |r|^2)^\lambda \prod_{l=1}^m r_l^{2k_l+2\kappa_l-1} \dif r_l
        \dif s_{(j)} \dif t_{(j)},
    \end{multline*}
    for every $\alpha, \beta \in \N^n$ such that $|\alpha_{(l)}| = |\beta_{(l)}| = \kappa_l$, for all $l = 1, \dots, m$, that satisfy $\alpha_{\widehat{(j)}} = \beta_{\widehat{(j)}}$.
\end{corollary}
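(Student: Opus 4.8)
The plan is to derive this corollary directly from Theorem~\ref{thm:Tkappaj_withg_forToeplitz} by rescaling the monomial basis, in exactly the way Corollary~\ref{cor:Tkappaj_withf_forToeplitz} is obtained from Theorem~\ref{thm:Tkappaj_withf_forToeplitz}. First I would write, using the definition of the orthonormal basis $e_\alpha$,
\[
    \langle T_a^{(\lambda)}(e_\alpha), e_\beta \rangle_\lambda
    = \sqrt{\frac{\Gamma(n+\lambda+|\alpha|+1)}{\alpha!\,\Gamma(n+\lambda+1)}}
      \sqrt{\frac{\Gamma(n+\lambda+|\beta|+1)}{\beta!\,\Gamma(n+\lambda+1)}}\,
      \langle T_a^{(\lambda)}(z^\alpha), z^\beta \rangle_\lambda .
\]
Since $|\alpha_{(l)}| = |\beta_{(l)}| = \kappa_l$ for all $l$, we have $|\alpha| = |\beta| = |\kapb|$, so the two gamma factors in the numerators collapse to $\Gamma(n+\lambda+|\kapb|+1)$ and those in the denominators to $\Gamma(n+\lambda+1)$. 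Moreover $\alpha! = \alpha_{\widehat{(j)}}!\,\alpha_{(j)}!$ and $\beta! = \beta_{\widehat{(j)}}!\,\beta_{(j)}!$, and the hypothesis $\alpha_{\widehat{(j)}} = \beta_{\widehat{(j)}}$ gives $\sqrt{\alpha!\,\beta!} = \alpha_{\widehat{(j)}}!\,\sqrt{\alpha_{(j)}!\,\beta_{(j)}!}$. Hence the prefactor above equals $\Gamma(n+\lambda+|\kapb|+1)\big/\bigl(\Gamma(n+\lambda+1)\,\alpha_{\widehat{(j)}}!\,\sqrt{\alpha_{(j)}!\,\beta_{(j)}!}\bigr)$.

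Next I would substitute into this the formula for $\langle T_a^{(\lambda)}(z^\alpha), z^\beta \rangle_\lambda$ provided by Theorem~\ref{thm:Tkappaj_withg_forToeplitz}. Its prefactor carries the factors $\Gamma(n+\lambda+1)$ and $\alpha_{\widehat{(j)}}!$ in the numerator, which cancel against the corresponding factors in the denominator of the rescaling prefactor just computed. What survives is precisely
\[
    \frac{2^{m-1}\,\Gamma(n+\lambda+|\kapb|+1)}{\pi^{k_j}\,\Gamma(\lambda+1)\,\sqrt{\alpha_{(j)}!\,\beta_{(j)}!}\,\prod_{l\neq j}(k_l+\kappa_l-1)!}
\]
multiplied by the same integral over $\tau(\B^m)\times S_+^{k_j-1}\times\T^{k_j}$, which is the asserted identity. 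That these numbers are, in the notation of Remark~\ref{rmk:Tblockdiagonalconst}, the matrix coefficients of $(T_a^{(\lambda)})_{\kapb,j}$ with respect to the orthonormal basis of $\cP_{\kappa_j}(\C^{k_j})$ indexed by the $\alpha_{(j)}$ then follows from Proposition~\ref{prop:EndU(kb,j,T)_matrixcoef}(2) and equation~\eqref{eq:Tconstantmatrix}: the right-hand side depends on $\alpha,\beta$ only through $\alpha_{(j)},\beta_{(j)}$, in accordance with the constancy along the diagonal asserted there.

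I do not expect a genuine obstacle here: the argument is bookkeeping with gamma functions and with the multiplicativity of multi-index factorials under the splitting $\alpha = (\alpha_{\widehat{(j)}},\alpha_{(j)})$. The only point needing mild care is tracking which factors belong to the $\widehat{(j)}$-block and which to the $j$-block so that the cancellations are carried out correctly. Alternatively — and this is presumably the route intended — one can avoid Theorem~\ref{thm:Tkappaj_withg_forToeplitz} altogether and obtain the statement directly from Corollary~\ref{cor:Tkappaj_withf_forToeplitz} by the same change of variables $\dif\xi_{(j)} = s_{(j)}^{1_{k_j}}\,\dif s_{(j)}\,\dif t_{(j)}$ together with $\xi_{(j)}^{\alpha_{(j)}}\,\overline{\xi}_{(j)}^{\beta_{(j)}} = s_{(j)}^{\alpha_{(j)}+\beta_{(j)}}\,t_{(j)}^{\alpha_{(j)}-\beta_{(j)}}$ used to pass from Theorem~\ref{thm:Tkappaj_withf_forToeplitz} to Theorem~\ref{thm:Tkappaj_withg_forToeplitz}; this reduces the corollary to an immediate substitution.
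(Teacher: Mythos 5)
Your proposal is correct, and your arithmetic with the $\Gamma$-factors and the factorization $\alpha! = \alpha_{\widehat{(j)}}!\,\alpha_{(j)}!$ checks out; moreover the "alternative route" you flag at the end — passing from Corollary~\ref{cor:Tkappaj_withf_forToeplitz} by the change of variables $\dif\xi_{(j)} = s_{(j)}^{1_{k_j}}\,\dif s_{(j)}\,\dif t_{(j)}$ — is exactly what the paper does. Your primary route (rescale Theorem~\ref{thm:Tkappaj_withg_forToeplitz} to the orthonormal basis) is the other side of the same commuting square, so the two derivations are essentially identical bookkeeping.
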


\begin{remark}\label{rmk:Tblockdiagonaldescription}
    Let $a \in L^\infty(\B^n)$ be a $(\kb,j)$-quasi-radial quasi-homogeneous symbol, and, for a given $\lambda > -1$, consider the Toeplitz operator $T_a^{(\lambda)}$. Following the notation from Remark~\ref{rmk:Tblockdiagonalconst} and recollecting the information from the previous results we can provide the following description of the Toeplitz operator $T_a^{(\lambda)}$. In the first place, there is a block diagonal decomposition
    \begin{equation}\label{eq:block_lambdaquasi}
        T_a^{(\lambda)} = \bigoplus_{\kapb \in \N^m}
            T_a^{(\lambda)} |_{\cP_\kapb(\C^n)},
    \end{equation}
    obtained from Proposition~\ref{prop:EndU(kb,j,T)-tensorproduct} and Corollary~\ref{cor:HinvariantToeplitz}. For every $\kapb \in \N^m$, let us denote $\kapb_{\widehat{j}} = (\kappa_1, \dots, \kappa_{j-1}, \kappa_{j+1}, \dots, \kappa_m)$.
    Then, for every $\kapb \in \N^m$, the linear transformation given by the block $T_a^{(\lambda)} |_{\cP_\kapb(\C^n)}$ satisfies the following unitary equivalence
    \begin{equation}\label{eq:Mlambdaa(kapb)decomp}
        T_a^{(\lambda)} |_{\cP_\kapb(\C^n)} \asymp
            \underbrace{M_{\lambda,a}(\kapb) \oplus
            \dots \oplus M_{\lambda,a}(\kapb)}_{\dim \cP_{\kapb_{\widehat{j}}}(\C^{n-k_j})\;\; \text{times}},
    \end{equation}
    where the right-hand side is the direct sum of $\dim \cP_{\kapb_{\widehat{j}}}(\C^{n-k_j})$ identical copies of the matrix $M_{\lambda,a}(\kapb) \in M_{\dim \cP_{\kappa_j}(\C^{k_j}) \times \dim \cP_{\kappa_j}(\C^{k_j})}(\C)$, whose coefficients are given by the formulas from Corollaries~\ref{cor:Tkappaj_withf_forToeplitz} and \ref{cor:Tkappaj_withg_forToeplitz}. More precisely, the coefficients of the matrix $M_{\lambda,a}(\kapb)$ are given by
    \begin{multline}\label{eq:Mlambdaa(kapb)coeff-f}
        M_{\lambda,a}(\kapb)_{\alpha,\beta} =  \frac{2^{m-1} \Gamma(n+\lambda+|\kapb|+1)}{\pi^{k_j} \Gamma(\lambda+1) \sqrt{\alpha! \beta!} \prod_{l\not=j} (k_l+\kappa_l-1)!} \times \\
        \times
        \int_{\tau(\B^m)\times \mbS^{k_j}}
            f(r,\xi_{(j)}) {\xi_{(j)}}^{\alpha} {\overline{\xi}_{(j)}}^{\beta} (1 - |r|^2)^\lambda \prod_{l=1}^m r_l^{2k_l+2\kappa_l-1} \dif r_l \dif \xi_{(j)}
    \end{multline}
    when the symbol $a$ is expressed in terms of a function $f$ as in Lemma~\ref{lem:kbj-quasi}(1), and they are given by
    \begin{multline}\label{eq:Mlambdaa(kapb)coeff-g}
        M_{\lambda,a}(\kapb)_{\alpha,\beta} = \frac{2^{m-1} \Gamma(n+\lambda+|\kapb|+1)}{\pi^{k_j} \Gamma(\lambda+1) \sqrt{\alpha! \beta!} \prod_{l\not=j} (k_l+\kappa_l-1)!} \times \\
        \times
        \int_{\tau(\B^m)\times S_+^{k_j-1} \times \T^{k_j}}
            g(r,s_{(j)},t_{(j)}) {s_{(j)}}^{\alpha + \beta + 1_{k_j}}
            {t_{(j)}}^{\alpha - \beta} \\
        \times
        (1 - |r|^2)^\lambda \prod_{l=1}^m r_l^{2k_l+2\kappa_l-1} \dif r_l
        \dif s_{(j)} \dif t_{(j)},
    \end{multline}
    when $a$ is expressed in terms of a function $g$ as in Lemma~\ref{lem:kbj-quasi}(2). In both cases, $\alpha, \beta$ run through all multi-indices in $\N^{k_j}$ that satisfy $|\alpha| = |\beta| = \kappa_j$. The matrices $M_{\lambda,a}(\kapb)$ appearing with multiplicity $\dim \cP_{\kapb_{\widehat{j}}}(\C^{n-k_j})$, for every $\kapb \in \N^m$, yield all the information associated to the Toeplitz operator $T_a^{(\lambda)}$. Such information is subsumed in equations \eqref{eq:block_lambdaquasi}, \eqref{eq:Mlambdaa(kapb)decomp}, \eqref{eq:Mlambdaa(kapb)coeff-f} and \eqref{eq:Mlambdaa(kapb)coeff-g}.
\end{remark}

\subsection*{Acknowledgements}
This research was partially supported by SNI-Conacyt and Conacyt Grants 280732 and 61517.

\end{document}